\documentclass[11pt]{amsart}

\usepackage[pagewise]{lineno}
\setlength{\textwidth}{5.8in}
\setlength{\textheight}{9in}
\oddsidemargin = 0.35in
\evensidemargin = 0.35in

\voffset = -0.5in

\usepackage{hyperref} 
\usepackage[dvips]{graphicx}
\usepackage{caption}
\usepackage{subcaption}

\usepackage[T1]{fontenc}
\usepackage{mathptmx}

\usepackage{color}

\usepackage{comment}

\usepackage{soul}

\linespread{1.015}
\DeclareMathOperator{\trace}{trace}
\DeclareMathOperator{\Ad}{Ad}

\newcommand{\bbar}{\begin{pmatrix}}
\newcommand{\ebar}{\end{pmatrix}}

\newcommand{\bdm}{\begin{displaymath}}
\newcommand{\edm}{\end{displaymath}}
\newcommand{\beq}{\begin{equation}}
\newcommand{\beqa}{\begin{eqnarray}}
\newcommand{\beqas}{\begin{eqnarray*}}
\newcommand{\eeq}{\end{equation}}
\newcommand{\eeqa}{\end{eqnarray}}
\newcommand{\eeqas}{\end{eqnarray*}}
\newcommand{\dd}{\textup{d}}

\DeclareMathOperator{\rank}{Rank}

\newcommand{\C}{{\mathbb C}}

\newcommand{\real}{{\mathbb R}}
\newcommand{\SSS}{{\mathbb S}}

\newcommand{\mv}{\mathcal{V}}

\newcommand{\mfk}{\mathfrak{k}}
\newcommand{\mfp}{\mathfrak{p}}


   \newtheorem{theorem}{Theorem}[section]
   \newtheorem{proposition}[theorem]{Proposition}
   
   \newtheorem{lemma}[theorem]{Lemma}

 \theoremstyle{remark}
   \newtheorem{example}[theorem]{Example}
   \newtheorem{remark}[theorem]{Remark}
   
\numberwithin{equation}{section}


\begin{document}
\author{David Brander}
\address{Department of Applied Mathematics and Computer Science\\ 
Richard Petersens Plads, Building 324\\
Technical University of Denmark\\
DK-2800 Kgs. Lyngby\\ Denmark}
\email{dbra@dtu.dk}

\author{Farid Tari}
\address{Instituto de Ci\^encias Matem\'aticas e de Computa\c{c}\~ao - USP\\
Avenida Trabalhador S\~ao-Carlense, 400 - Centro \\
CEP: 13566-590 - S\~ao Carlos - SP, Brazil}
\email{faridtari@icmc.usp.br}

\title[Wave maps and constant curvature surfaces: singularities and bifurcations]{Wave maps and constant curvature surfaces: \\
singularities and bifurcations}

\begin{abstract}
Wave maps (or Lorentzian-harmonic maps) from a $1+1$-dimensional Lorentz space into the $2$-sphere are associated
to constant negative Gaussian curvature surfaces in Euclidean 3-space via the Gauss map,
which is harmonic with respect to the metric induced by the second fundamental form.
We give a method for constructing germs of analytic Lorentzian-harmonic maps from their $k$-jets and use this construction to study the singularities of such maps. 
We also show how to construct pseudospherical surfaces with prescribed singularities using loop groups. 
We study the singularities of pseudospherical surfaces and obtain their bifurcations in generic 1-parameter families of such surfaces. 
 \end{abstract}

\keywords{Bifurcations, constant Gauss curvature, Cauchy problem, differential geometry, discriminants, frontals, integrable systems, loop groups, pseudospherical surfaces,  singularities, wave fronts, wave maps.}
\subjclass[2010]{Primary 53A05; Secondary 53C43, 57R45}

\date{\today}

\maketitle


\section{Introduction}
We study in this paper Lorentzian-harmonic maps into the 2-sphere and their associated pseudospherical surfaces.
Let $S$ be a connected surface with a Lorentz structure and with universal cover $\tilde S$.
A harmonic map $N: S \to \SSS^2$
determines a geometrically unique map $f: \tilde S \to \real^3$, that, with the metric induced from $\real^3$,
has constant Gauss curvature $K=-1$ at all regular points, and has Gauss map $N$. We will use the term
\emph{pseudospherical surface} for such a map $f$. Conversely, the Gauss map
of a regular constant negative curvature surface is harmonic with respect to the metric induced by the second fundamental form.

We are concerned here with the local singularities of analytic Lorentzian-harmonic maps into the 2-sphere and of their associated pseudospherical surfaces. In general, the singularities are determined by a certain jet of these mappings at the singular point. A 
natural question arises. Suppose that a given polynomial map $P$ of degree $k$ from a Lorentz surface into the 2-sphere satisfies the Lorentzian-harmonic condition up to order $k$: is there a germ of a Lorentzian-harmonic map into the 2-sphere with $k$-jet $P$?
We prove that this is the case in Theorem \ref{theo:kjetLorentzH}. 
This opens the way to the study, from a singularity theory point of view, of the singularities of harmonic and Lorentzian-harmonic map-germs into a target space which is not flat (so far, to our knowledge, the only work on singularities such map-germs is the one on  
harmonic maps between 2-dimensional Riemannian manifolds carried out by J.C. Wood \cite{wood1977}).

A pseudospherical surface is, by definition, a frontal:  it has, even at non-immersed points, a well-defined normal. We distinguish between two different types of singularities of these surfaces, one where the map $f$ is a wave front (sometimes called a \emph{weakly regular} pseudospherical surface) and the other where it is not.
We deal in \S \ref{sec:wavefront_case} with wave front pseudospherical surfaces. These are parallels of regular linear Weingarten surfaces. 
We use Theorem \ref{theo:kjetLorentzH}, results from singularity theory \cite{ArnoldWavefront,bruceParallel} and the recognition criteria in \cite{dbft} 
to obtain the bifurcations in generic 1-parameter families of pseudospherical surfaces (Proposition \ref{prop:NoD_4} and Theorem \ref{theo:bif_Wavefront_case}). In this case, the stable singularities of pseudospherical surfaces are cuspidal edges and swallowtails (Figure \ref{fig:StablePSS}; see \cite{ishimach}). We prove that the bifurcations in generic 1-parameter families of such surfaces are the so-called cuspidal lips ($A_3^+$), cuspidal beaks ($A_3^-$) and cuspidal butterfly ($A_4$); see Figure \ref{fig:familiesPSS}.

\begin{figure}[htp]
\begin{center}
\includegraphics[height=2.5cm]{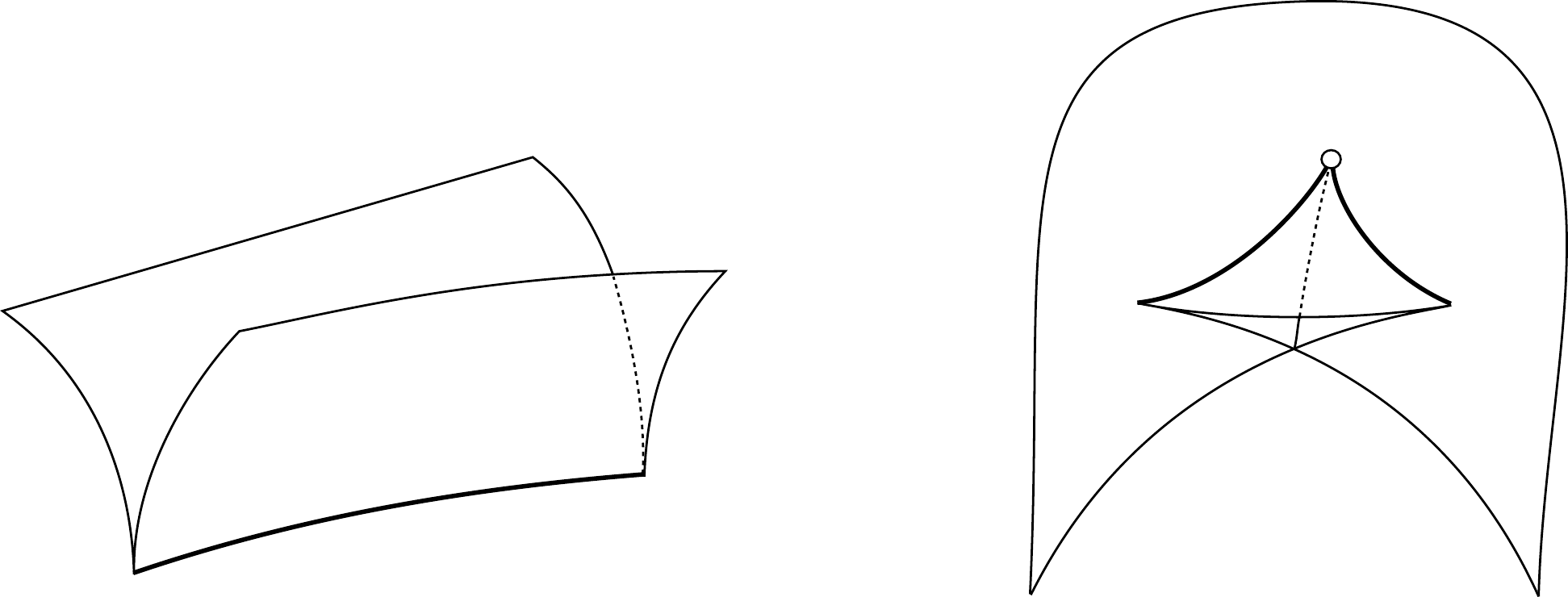}
\caption{Stable singularities of wave fronts and parallels: cuspidal edge ($A_2$, left) and swallowtail ($A_3$, right). (The singular set of an ordinary cuspidal edge is drawn as a thicker curve.)
	 Both cases occur on pseudospherical surfaces.}
\label{fig:StablePSS}

\end{center}
\end{figure}

\begin{figure}[htp]
\begin{center}
\includegraphics[height=6.75cm]{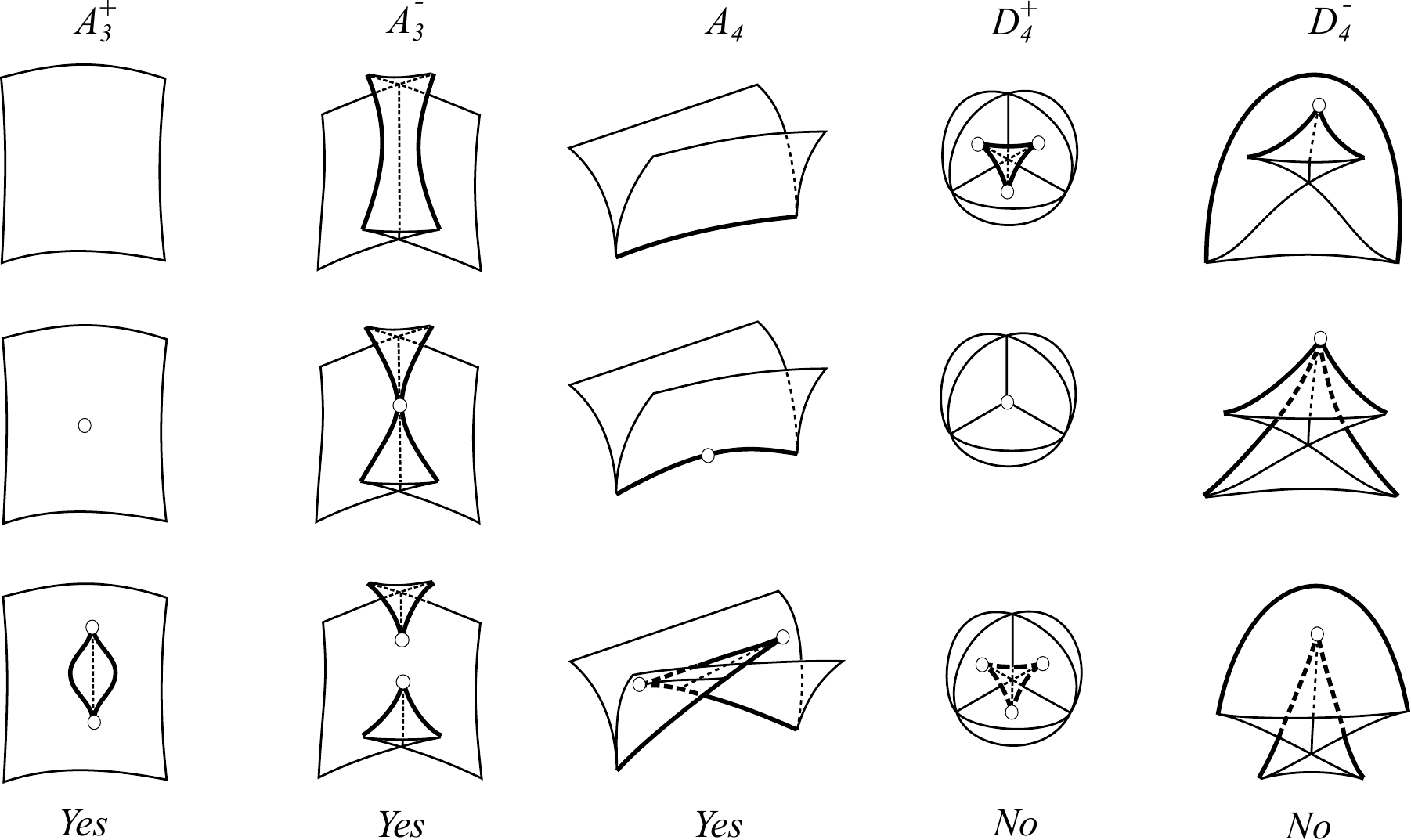}
\caption{Generic bifurcations of parallels from (\cite{ArnoldWavefront} and \cite{bruceParallel}). ``Yes'' for those that can occur in families of pseudospherical surfaces and ``No'' for those that do not.}
\label{fig:familiesPSS}
\end{center}
\end{figure}

\begin{figure}[htp]
\begin{center}
\includegraphics[height=2cm, width=10cm]{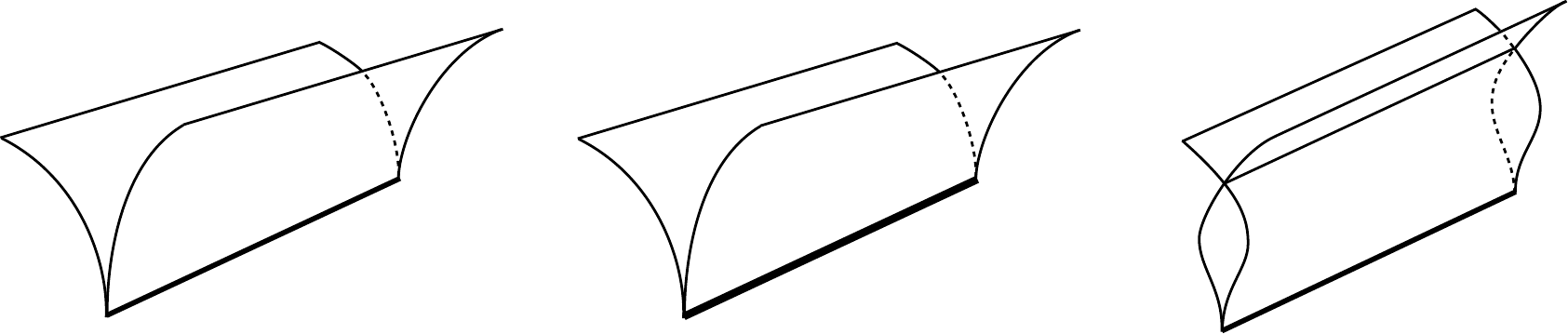}
\caption{Generic bifurcations of a pseudospherical surface with a 2/5-cuspidal edge singularity.
(The 2/5-cuspidal edge is the thick curve in the middle figure.)
}
\label{fig:bif25CuspEdge}
\end{center}
\end{figure}

\begin{figure}[htp]
\begin{center}
\includegraphics[height=2cm, width=10cm]{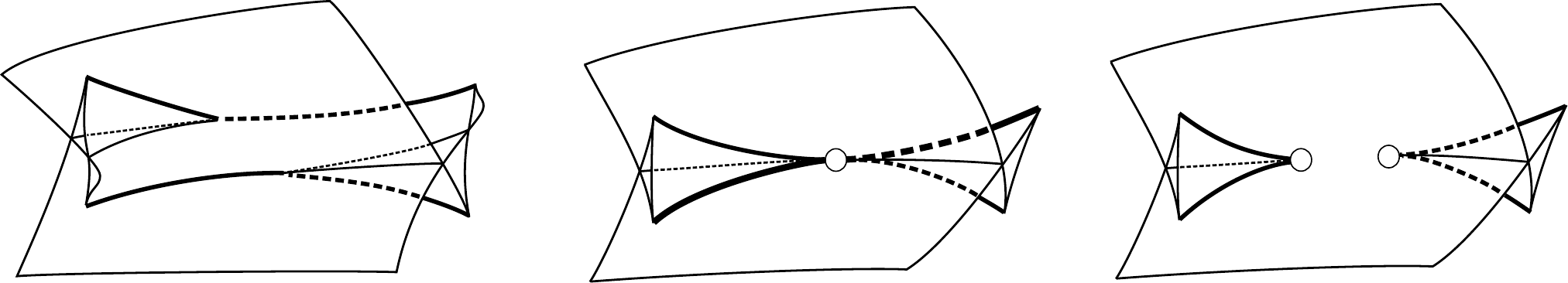}
\caption{Generic bifurcations of a pseudospherical surface with a Shcherbak singularity. 
(The 2/5-cuspidale edge is the thicker curve in the middle figure.)}
\label{fig:Shcherbak}
\end{center}
\end{figure}

In \S \ref{sec:frontal} we deal with singularities at a point where the surface is not a wave front.
There is so far no general theory that deals with bifurcations in families of frontals with non-isolated singularities (compare \cite{Ishikawa(2005)}), unlike the wave front case where one uses generating families of functions (\cite{arnoldetal}); the difficulty being that the map $(f,N)$ from the surface to the unit cotangent bundle $T_1^*\mathbb R^3$ is not necessarily an immersion. 
See \cite{Ishikawa_Frontals} for a survey article, new results and references  on frontals.
We define a map to the $k$-jet space of Lorentzian-harmonic map-germs and use it to define the codimension of a singularity of the associated pseudospherical surface as well as the notion of generic families of such surfaces. 
There are no stable non-wave front singularities. The codimension 1 singularities are the $2/5$-cuspidal edges which bifurcate as in Figure \ref{fig:bif25CuspEdge} (Theorem \ref{theo:2/5cuspidaledge}), and the Shcherbak surface singularity which bifurcates as Figure \ref{fig:Shcherbak} (Theorem \ref{theo:2/5CuspidalBeaks}).  Thus, the five bifurcations illustrated in Figures \ref{fig:familiesPSS}, \ref{fig:bif25CuspEdge} and \ref{fig:Shcherbak} is the complete list of bifurcations in $1$-parameter families of generic pseudospherical surfaces.

In \S \ref{sec:Loop_Groups} we provide a method for constructing the bifurcations explicitly, using loop groups. Previously, in \cite{singps}, loop group methods were used to study singularities by solving the Cauchy problem along the singular curve itself. That approach has the complication that a different treatment is needed if the curve is tangent to a characteristic direction.
Here we use a new approach, solving the Cauchy problem along an arbitrary non-characteristic curve that passes through the singular point of interest. 
This gives a simple unified formulation that works for all types of singularities.

In Theorem \ref{gctheorem} we solve, using loop groups, the Cauchy problem for a Lorentzian-harmonic map $N :\real^{1,1} \to \SSS^2$, with $N$ and one of its transverse derivatives prescribed along a non-characteris\-tic curve, by giving the formula for a potential that produces the solution. This allows one to compute the solutions numerically.  The theorem represents all Lorentzian-harmonic maps in terms of a triple of functions $(a(t),b(t),c(t))$ of one variable. Theorems \ref{wavefronttheorem} and \ref{nonwavefronttheorem} then give conditions on $a$, $b$ and $c$ that characterize the different types of generic singularities, and bifurcations in generic one parameter families of pseudospherical surfaces that were classified in the preceding sections.

In \S \ref{sec:Classification} we turn our attention to the singularities of the harmonic map itself. We obtain the $\mathcal A$-singularities of map-germs from the plane into the 2-sphere that can be realized by Lorentzian-harmonic maps, where $\mathcal A$ is the Mather group of germs of changes of coordinates in the source and target.  Finally, in \S\ref{sec:Classification_plane}, we discuss as a matter of side interest the $\mathcal A$-singularities of Lorentzian-harmonic maps into the plane.  

Concluding remarks:
A harmonic map $N: S \to \SSS^2$ and its associated pseudospherical surface $f: S \to \real^3$ have the same singular set in the coordinate domain $S$: but, since the map and the target space are not the same, the $\mathcal{A}$-singularities are completely different.  The unifying theme of this work is that, in both cases,  the essential tool is the characterization of $k$-jets of harmonic maps in Theorem \ref{theo:kjetLorentzH}.

For pseudospherical surfaces, 
we have given two representations of the generic singularities and bifurcations: one via the $k$-jet characterization, and the other by explicit formulas for the loop group potentials that produce them.  The first representation enables one to directly read off the codimension of a given singularity, whilst the loop group potentials give simple formulas in terms of geometric data along a curve and can be used to compute solutions.
While it is of course impossible to describe all singularities, a classification of the generic ones and those of codimension 1 gives
a good understanding of the singularities usually encountered on global solutions. One of the challenges of this work is that there is no general theory for the bifurcations of frontals. Although
non-wave front singularities are not stable, it is necessary to deal with them when one considers bifurcations of $1$-parameter families of pseudospherical surfaces, as illustrated by the Shcherbak singularity. The results in this paper provide a motivation for seeking to develop such theory.


\section{Basic properties of Lorentzian-harmonic maps and their associated pseudospherical surfaces}  \label{sec:harmonicmaps}

\subsection{Lorentzian harmonic maps, pseudospherical frontals and wave fronts}

Let $(S,h)$  be a simply connected Lorentz surface and
$N: S \to \SSS^2$  a smooth  map.  
Then $N$ is Lorentzian-harmonic if and only if for any null coordinate system $(x,y)$ the mixed partial derivative
$N_{xy}$ is proportional to $N$, i.e.,
\[
N \times N_{xy}=0.
\]
 Lorentzian-harmonic maps are also commonly
known as wave maps: see \cite{taturu2004} for a survey on wave maps, and \cite{tkmilnor1982, pohlmeyer} for the connection with special surfaces.

The pseudospherical surface associated to $N$, unique up to a translation,
 is the solution  $f: S \to \real^3$ to the system:
\beq  \label{assocfrontaleqn}
f_x = N \times N_x , \quad
f_y = - N \times N_y.
\eeq
The compatibility of the system \eqref{assocfrontaleqn}, i.e.
$\partial _y ( N \times N_x ) = \partial _x (- N \times N_y)$,
 is in fact equivalent to the Lorentzian-harmonic map equation $N \times N_{xy}=0$. 
Moreover, $f$ is well-defined by $N$ independent of the choice of null-coordinates.

A differentiable map $g: S\to \real^3$ from a surface into Euclidean space is called a \emph{frontal map} and its image a \emph{frontal}
if there is a differentiable map $\nu:S \to \SSS^2 \subset \real^3$ such that $\dd g$ is orthogonal to
 $\nu$ (see \cite{Ishikawa_Frontals} for references).  
This means that the Legendrian lift $L=(g, \nu): S \to \real^3 \times \SSS^2$ is an isotropic map, where $\real^3 \times \SSS^2$ is identified locally with the unit cotangent bundle $T_1^*\mathbb R^3$ equipped with the canonical contact structure: that is, the pull-back of the contact form by $L$ vanishes on $S$.
 
The map $g$ is a Legendrian map and its image a \emph{wave front} (or simply a front) if the map $L$ is an immersion (this means that the image of $L$ is a Legendrian surface); see \cite{arnoldetal}.

In this paper we use the same notation for a parameterization of a surface and for the surface itself, so frontal (resp. wave front) also indicates frontal map (resp. Legendrian map).

\begin{proposition}\label{prop:f_frontal_wavefront}
A pseudospherical surface $f$ is always a frontal. It is a wave front if and only if both $N_x$ and $N_y$ are non-vanishing, or, equivalently, both
$f_x$ and $f_y$ are non-vanishing.
\end{proposition}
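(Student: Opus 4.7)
The plan is to establish the three separate parts in order: first that $f$ is always a frontal with Gauss map $N$, second that the non-vanishing conditions on $(N_x, N_y)$ and on $(f_x, f_y)$ are pointwise equivalent, and third that the Legendrian lift $L=(f,N)$ is an immersion precisely under this common non-vanishing condition.

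For the first part I would simply invoke the defining system \eqref{assocfrontaleqn}: the vectors $f_x = N \times N_x$ and $f_y = -N \times N_y$ are, as cross products with $N$, orthogonal to $N$ at every point. Thus $\dd f \perp N$, and since $N$ maps into $\SSS^2$ it provides a smooth unit normal making $f$ into a frontal with Legendrian lift $L=(f,N)$.

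For the second part I would exploit the fact that $|N|=1$ implies $N_x, N_y \in T_N\SSS^2$, i.e.\ $N \perp N_x$ and $N \perp N_y$. The elementary identity $|N\times v| = |v|$ for $v \perp N$ with $|N|=1$ then gives $|f_x| = |N_x|$ and $|f_y| = |N_y|$, which is exactly the claimed equivalence between the non-vanishing of $N_x, N_y$ and of $f_x, f_y$.

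For the third (and main) part I would analyse the rank of $\dd L$. In null coordinates
\[
L_x = (N\times N_x,\, N_x), \qquad L_y = (-N\times N_y,\, N_y).
\]
If $\alpha L_x + \beta L_y = 0$, the $\SSS^2$-component gives $\alpha N_x + \beta N_y = 0$, while the $\real^3$-component gives $N\times(\alpha N_x - \beta N_y) = 0$; since $\alpha N_x - \beta N_y$ is already orthogonal to $N$, being parallel to $N$ forces it to vanish. Adding and subtracting yields $\alpha N_x = 0$ and $\beta N_y = 0$. Hence if both $N_x$ and $N_y$ are non-zero then $\alpha=\beta=0$ and $L$ is an immersion; conversely, if $N_x=0$ (resp.\ $N_y=0$) at a point, then $L_x=0$ (resp.\ $L_y=0$) there and $L$ fails to be an immersion. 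Combined with step two this finishes the proof.

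The main obstacle, such as it is, lies in the linear-algebraic argument of the third step: one has to see that the two components of $L_x$ and $L_y$ carry complementary information, which is guaranteed by $N \perp N_x, N_y$ together with the cross-product identity $N \times (N \times w) = -w$ for $w \perp N$. Once this is noted the proof is routine; no use of the harmonic map equation $N\times N_{xy}=0$ is needed beyond the fact that it guarantees the existence and differentiability of $f$.
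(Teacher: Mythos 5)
Your proposal is correct and follows essentially the same route as the paper: $\dd f\perp N$ makes $f$ a frontal with Legendrian lift $L=(f,N)$, and the computation of $L_x=(N\times N_x,N_x)$, $L_y=(-N\times N_y,N_y)$ shows $L$ is an immersion exactly when $N_x\neq 0\neq N_y$, which is equivalent to $f_x\neq 0\neq f_y$ via $|f_x|=|N_x|$, $|f_y|=|N_y|$. Your only addition is to spell out the linear-independence argument (which the paper leaves implicit), and that step is carried out correctly.
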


\begin{proof} For a pseudospherical surface $f$, $\dd f$ is  orthogonal to $N$, which makes $f$ a frontal
with a Legendrian lift  $L=(f,N): S \to \real^3 \times \SSS^2$. Since $L_x = (N \times N_x, N_x)$ and $L_y=(-N \times N_y, N_y)$, the map $L$ is an immersion if and only if $N_x\ne 0\ne N_y$, which is equivalent to $f_x\ne 0\ne f_y$ by \eqref{assocfrontaleqn}.
\end{proof}

From the equations \eqref{assocfrontaleqn} for $f$, we have $|f_x|=|N_x|$, $|f_y|=|N_y|$ and
\[
\langle f_x, f_y \rangle = -\langle N_x , N_y \rangle,
\]
so $\dd f$ and $\dd N$ have the same rank at each point and the set of points where $f$ is regular is precisely the set of points where $N$ is
regular.  Around such a point we can write
\[
|f_x|=|N_x|=:A \neq 0, \quad |f_y|=|N_y|= : B\neq 0, 
\quad \langle f_x, f_y \rangle = AB \cos \phi,
\]
so the first fundamental form of $f$, with the metric induced from $\real^3$,
is 
$$I = A^2 \dd x^2 + 2AB \cos \phi \dd x \dd y + B^2 \dd y^2.$$

By the definition of $f$
and by the harmonicity of $N$ we have
\[
\langle f_x , N_x \rangle = \langle f_y, N_y \rangle =0, 
\quad f_{xy} = N_y \times N_x  = f_x \times f_y = (AB \sin \phi) N,
\]
so the second fundamental form of $f$ is 
$$II = 2AB \sin \phi \, \dd x \dd y.$$ 
Hence
the null coordinates $(x,y)$ are asymptotic coordinates for $f$, and
the Gaussian curvature of $f$ is constant $K=-1$.
Conversely, it is well known that 
 global asymptotic coordinates exist for any regular constant negative curvature
surface $f$ (see e.g. \cite{docarmo}), and that these coordinates can be chosen with $\langle f_x, f_x\rangle = \langle f_y, f_y\rangle =1$. Asymptotic coordinates satisfy
$\langle f_x, N_x\rangle = \langle f_y , N_y\rangle=0$.  From these conditions it follows that $N_{xy}$ is parallel to $N$, i.e., the Gauss map is  harmonic with respect to the Lorentz structure defined by the  second fundamental form.
Thus the maps defined by \eqref{assocfrontaleqn} are a
generalization of regular pseudospherical surfaces.

\subsection{The local singular set for a pseudospherical wave front}
In a neighbourhood of a point where $f$ is a pseudospherical wave front, the angle $\phi$ is well-defined and the set
\[
\Sigma = \{ (x,y) ~|~ \sin \phi = 0 \}
\]
is the set of points where $f$ (and hence $N$ also) fails to be an immersion. 
At a regular point of $\Sigma$, 
the \emph{null-direction} is the kernel
of $\dd f$.  Since, along the singular curve 
$\dd f = (A \dd x + \varepsilon_1 B \dd y) f_x/|f_x|$, where $\varepsilon_1 = \pm 1$ 
depending on whether 
$\phi$ is an even or odd multiple of $\pi$,
the null direction is given by:
\[
\eta = B \partial_x - \varepsilon_1 A \partial_y,
\qquad \varepsilon_1 = \hbox{sign}(\cos \phi),
\]
which would be a principal direction if the surface were regular.

Geometric recognition criteria for identifying the singularities of wave fronts (and some frontals) 
$f$ with a Legendrian lift $L=(f,\nu)$ 
are established in \cite{izumiyasaji,izsata,krsuy} using 
the singular set and the null-direction. We reproduce them here for completeness. 
Let $\sigma(x,y)=\det(f_x,f_y,\nu)$ be the function whose zero set gives the singular set of $f$. 
When $\rank(\dd f_p)=1$, there is a unique vector field $\eta$ along the singular set of $f$ 
parameterised by the null-directions.

\begin{theorem}\label{theo:critiria}
Let $f : U\to \mathbb R^3$  be a wave front and $p$ a singular point of $f$.

{\rm (i)} The germ of $f$ at $p$ is a cuspidal edge $(A_2)$ if and only if
the singular set is regular and
$\eta\sigma(p)\ne 0$; \cite{krsuy}. 

{\rm (ii)} The germ of $f$ at $p$ is a swallowtail $(A_3)$ if and only if
the singular set is regular and $\eta\sigma(p)=0$ and $\eta\eta\sigma(p)\ne 0$; \cite{krsuy}.

{\rm (iii)} The germ of $f$ at $p$ is a cuspidal butterfly $(A_4)$ if and only if
the singular set is regular and $\eta\sigma(p)=\eta\eta\sigma(p)=0$ and $\eta\eta\eta\sigma(p)\ne 0$; \cite{izumiyasaji}.

{\rm (iv)} The germ of $f$ at $p$ is a cuspidal lips $(A_3^-)$ if and only if 
$\sigma$ has a Morse singularity of index zero or two at $p$; \cite{izsata}.

{\rm (v)} The germ of $f$ at $p$ is a cuspidal beaks $(A_3^-)$ if and only if $\sigma$  has a Morse singularity of index one at $p$ and $\eta\eta\sigma(p)\ne 0$; \cite{izsata}.
\end{theorem}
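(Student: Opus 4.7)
The plan is to proceed case-by-case, since each of (i)--(v) is a recognition theorem from the cited literature and relies on reducing $f$ to a normal form in appropriate coordinates. The central objects are the signed area function $\sigma = \det(f_x,f_y,\nu)$, whose vanishing locus is the singular set $\Sigma$, and the null direction $\eta$, which spans $\ker \dd f$ along $\Sigma$. Since $f$ is a wave front, the Legendrian lift $(f,\nu)$ is an immersion; this constrains the degeneracy of $\dd f$ so that whenever $\sigma$ vanishes with $\dd \sigma(p) \neq 0$, the set $\Sigma$ is a regular curve and $\ker \dd f|_p$ is one-dimensional, with $\nu$ continuing to vary non-trivially along the null flow.

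For parts (i)--(iii), the singular set $\Sigma$ is assumed to be a smooth regular curve. I would choose local coordinates $(u,v)$ adapted so that $\Sigma = \{v=0\}$, and examine whether the null vector field $\eta$ is transverse or tangent to $\Sigma$. Transversality, $\eta\sigma(p)\neq 0$, is equivalent to $f$ having the cuspidal-edge normal form $(u,v^2,v^3)$, giving (i). If instead $\eta\sigma(p)=0$ then $\eta$ is tangent to $\Sigma$ at $p$, and one reads off the singularity type from the order of vanishing of $\eta\sigma$ restricted to $\Sigma$: $\eta\eta\sigma(p)\neq 0$ produces the $A_3$ swallowtail normal form, while a further degeneracy $\eta\eta\sigma(p)=0$ with $\eta\eta\eta\sigma(p)\neq 0$ produces the $A_4$ cuspidal butterfly. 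In each case, the identification with the normal form is obtained by iterated coordinate changes in source and target, combined with the uniqueness of the Legendrian unfolding from \cite{arnoldetal}.

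For (iv) and (v), the singular set itself is singular at $p$: $\sigma$ has a Morse critical point. A Morse index of $0$ or $2$ makes $\Sigma$ reduce to an isolated point near $p$, producing the cuspidal-lips normal form $A_3^+$; a Morse index of $1$ makes $\Sigma$ a pair of smooth curves crossing transversally at $p$, and the additional condition $\eta\eta\sigma(p)\neq 0$ rules out more degenerate $D_4$-type behaviour (where $\eta$ would be tangent to one of the branches at $p$), leaving the cuspidal-beaks normal form $A_3^-$. The tangency of $\eta$ to the two branches of $\Sigma$ at the Morse point is automatic under these hypotheses, since $\eta$ lies in $\ker \dd f$ and $\dd \sigma(p)=0$.

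The main obstacle is the sufficiency direction in each case: the listed vanishing conditions on $\sigma$ and its $\eta$-derivatives are evidently necessary by pulling back to the normal forms, but showing they suffice for $\mathcal A$-equivalence to the normal form requires either the Mather--Arnold theory of Legendrian singularities via generating families (using that a wave front locally arises from a generating family of functions and that the $A_k$ conditions above correspond to standard catastrophe-theoretic jet conditions on the generating family), or a direct, rather intricate, construction of the required coordinate changes. Since these criteria are established in \cite{krsuy}, \cite{izumiyasaji}, and \cite{izsata}, the approach I would take is to cite those sources rather than reprove each statement from scratch, noting only the small bookkeeping needed to match notations for $\sigma$, $\eta$, and the orientation conventions used here.
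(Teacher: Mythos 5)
The paper gives no proof of this theorem: it is reproduced verbatim from the cited sources \cite{krsuy}, \cite{izumiyasaji} and \cite{izsata}, which is exactly what your proposal ultimately does, so your approach matches the paper's. (One muddled sentence in your expository sketch: what is automatic at a Morse critical point of $\sigma$ is $\eta\sigma(p)=0$, not ``tangency of $\eta$ to the two branches of $\Sigma$'' --- tangency to a branch is precisely the degeneracy that $\eta\eta\sigma(p)\neq 0$ excludes --- but since you defer to the literature for the actual argument this does not affect correctness.)
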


\begin{remark}\label{rem:sing_f&N}
It is worth observing that for a singular pseudospherical wave front surface $f$, although the singular sets of $f$ and $N$ coincide, by \eqref{assocfrontaleqn} the kernel direction of $\dd f_p$ and that of $\dd N_p$ are orthogonal at points $p$ on their singular set. Therefore, we should not expect the singularities of $f$ and those of $N$ to be related in general. (See \cite{KabataR2R2} for recognition criteria for singularities of maps-germs from the plane to the plane.)  
\end{remark}

\subsection{Construction of germs of analytic Lorentzian-harmonic maps}
To analyze the local singularities of Lorentzian harmonic maps and their associated pseudospherical surfaces it will be useful to have a characterization of an arbitrary $k$-jet of such maps.
Let $N:\Omega\subset \mathbb{R}^{1,1}\to S^2$ be a Lorentzian-harmonic map and $(x,y) $ a null coordinate system in $\Omega$. 
We are interested in the local singularities of $N$, so we suppose that $O=(0,0)\in \Omega$ 
and that $N(0,0)=(0,0,1)$. Then we can write $N$ locally at $O$ in the form 
$N(x,y)=\delta(x,y)(u(x,y),v(x,y),1)$, with $u,v$ analytic functions 
on $\Omega$ vanishing at the origin, and $\delta=(1+u^2+v^2)^{-\frac{1}{2}}$. The Lorentzian-harmonic condition $N\times N_{xy}=0$ is equivalent to the following system of semi-linear PDEs:
\begin{equation}\label{sys:Pdes}
\begin{cases}
\displaystyle{u_{xy}-\frac{1}{1+u^2+v^2}\left(2uu_xu_y+v(u_xv_y+u_yv_x)\right)=0,}\\
\displaystyle{v_{xy}-\frac{1}{1+u^2+v^2}\left(2vv_xv_y+u(u_xv_y+u_yv_x)\right)=0.}
\end{cases}
\end{equation}

We write $j^nu(x,y)=\sum_{k=1}^n\sum_{i=0}^ka_{ki}x^{k-i}y^i$ and 
$j^nv(x,y)=\sum_{k=1}^n\sum_{i=0}^kb_{ki}x^{k-i}y^i$ for the $n$-jets, at the origin, of $u$ and $v$ respectively. Then
using \eqref{sys:Pdes}, one can show that 
\begin{equation}\label{n-jets(u,v)}
\begin{split}
j^nu(x,y)&=\sum_{k=1}^n a_{k0}x^{k}+\sum_{k=1}^na_{kk}y^k+
\sum_{k=3}^n\sum_{i=1}^{k-1}P_{ki}(a,b)x^{k-i}y^i,\\
j^nv(x,y)&=\sum_{k=1}^n b_{k0}x^{k}+\sum_{k=1}^nb_{kk}y^k+
\sum_{k=3}^n\sum_{i=1}^{k-1}P_{ki}(b,a)x^{k-i}y^i,
\end{split}
\end{equation}
where $a_{ki}=P_{ki}(a,b), 1\le i\le k-1$, are polynomial functions in $a_{l0},a_{ll},b_{l0},b_{ll}$, 
with $1\le l\le k-2$, and $b_{ki}=P_{ki}(b,a)$. 
(Observe that $u_{xy}=v_{xy}=0$ at the origin, so $a_{21}=b_{21}=0$, and in \eqref{sys:Pdes}, the second equation can be obtained from the first one by interchanging $u$ and $v$; that is why $b_{ki}=P_{ki}(b,a)$.) 
For the explicit examples considered below (Example \ref{examples1}) we only need the $3$-jet,
which has the form:
$$
\begin{array}{rcl}
j^3u(x,y)&=&
a_{10}x+a_{11}y+a_{20}x^2+a_{22}y^2+
a_{30}x^3+a_{33}y^3+\\
&&(a_{10}a_{11}^2+\frac{1}{2}a_{10}b_{1 1}^2+\frac{1}{2}a_{1 1}b_{1 0}b_{11})xy^2+(a_{1 0}^2a_{1 1}+\frac{1}{2}a_{1 0}b_{10}b_{1 1}+\frac{1}{2}a_{11}b_{10}^2)x^2y,\\
j^3v(x,y)&=& b_{10}x+b_{11}y+b_{20}x^2+b_{22}y^2+
b_{30}x^3+b_{33}y^3+\\
&&(b_{10}b_{11}^2+\frac{1}{2}b_{10}a_{1 1}^2+\frac{1}{2}b_{1 1}a_{1 0}a_{11})xy^2
+(b_{1 0}^2b_{1 1}+\frac{1}{2}b_{1 0}a_{10}a_{1 1}+\frac{1}{2}b_{11}a_{10}^2)x^2y.
\end{array}
$$

The above considerations suggest that the $n$-jet space of germs of such Lorentzian-harmonic maps with $N(0)=N_0$ is a germ of a smooth manifold of dimension $4n$ and can be parametrised by $a_{i0},a_{ii},b_{i0},b_{ii}$, for $1\le i\le n$, all in $\mathbb R$. (If we allow $N_0$ to vary in $S^2$, then the dimension becomes $4n+2$.) Indeed,

\begin{theorem}\label{theo:kjetLorentzH} For any 
$({\bf a}_1,{\bf a}_2,{\bf b}_1,{\bf b}_2)\in \mathbb R^{4n}$, with 
${\bf a}_1=(a_{i0})_{1\le i\le n}$, 
${\bf a}_2=(a_{ii})_{1\le i\le n}$, 
${\bf b}_1=(b_{i0})_{1\le i\le n}$, and
${\bf b}_2=(b_{ii})_{1\le i\le n}$, 
there is a local analytic Lorentzian-harmonic map into the 2-sphere determined by $({\bf a}_1,{\bf a}_2,{\bf b}_1,{\bf b}_2)$ with $n$-jet as in \eqref{n-jets(u,v)}.
\end{theorem}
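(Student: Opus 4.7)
The plan is to view \eqref{sys:Pdes} as a semi-linear hyperbolic system in characteristic form and to invoke the analytic Goursat theorem to produce a solution, then to verify the claimed jet structure by induction on $k$. First note that \eqref{sys:Pdes} can be written as
\[
u_{xy}=F(u,v,u_x,u_y,v_x,v_y),\qquad v_{xy}=G(u,v,u_x,u_y,v_x,v_y),
\]
with $F,G$ real-analytic in a neighbourhood of the origin in $\mathbb{R}^{6}$, since the denominator $1+u^2+v^2$ is bounded away from $0$ near $(u,v)=(0,0)$. Given $({\bf a}_1,{\bf a}_2,{\bf b}_1,{\bf b}_2)$, I would prescribe analytic characteristic data on the two null lines through $O$ by the polynomials
\[
u(x,0)=\sum_{k=1}^n a_{k0}x^k,\quad u(0,y)=\sum_{k=1}^n a_{kk}y^k,
\]
and the analogues for $v$ coming from ${\bf b}_1,{\bf b}_2$; all four vanish at $O$, so the compatibility condition at the corner of the Goursat domain is automatic. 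The classical analytic Goursat existence theorem for semi-linear hyperbolic systems of the form $u_{xy}=F$, $v_{xy}=G$ with analytic $F,G$ (proved by the method of majorants, in the spirit of the Cauchy--Kovalevskaya theorem) then produces a unique local analytic solution $(u,v)$, and $N=\delta(u,v,1)$ with $\delta=(1+u^2+v^2)^{-1/2}$ is the required Lorentzian-harmonic map satisfying $N(0)=(0,0,1)$.

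To verify that the $n$-jet has the form \eqref{n-jets(u,v)}, note that the axis coefficients $a_{k0},a_{kk},b_{k0},b_{kk}$ are read off directly from the characteristic data. For a mixed coefficient $a_{ki}$ with $1\le i\le k-1$, differentiating the first equation of \eqref{sys:Pdes} $(k-i-1)$ times in $x$ and $(i-1)$ times in $y$ and evaluating at $O$ yields
\[
\partial_x^{k-i}\partial_y^i u(0,0)=\bigl(\partial_x^{k-i-1}\partial_y^{i-1}F\bigr)(0,0).
\]
The structural observation driving the induction is that every monomial of $F$, regarded as a convergent power series in the six variables $(u,v,u_x,u_y,v_x,v_y)$, contains at least one factor of $u$ or $v$; this is manifest for the numerator $2uu_xu_y+v(u_xv_y+u_yv_x)$ and is preserved by multiplication by the series $1/(1+u^2+v^2)$. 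Since $u(0,0)=v(0,0)=0$, Leibniz's rule forces every such undifferentiated factor to absorb at least one of the $k-2$ applied derivatives, and a short combinatorial count then shows that every derivative of $u$ or $v$ entering the expression has total order at most $k-2$. By the inductive hypothesis these are polynomial in the axis data $a_{l0},a_{ll},b_{l0},b_{ll}$ with $1\le l\le k-2$, which delivers $P_{ki}(a,b)$; the symmetry of \eqref{sys:Pdes} under $(u,v)\leftrightarrow(v,u)$ then produces $P_{ki}(b,a)$ for~$v$.

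The existence step is routine once $F,G$ are recognised as analytic; the main obstacle I expect is the structural observation on the monomials of $F$ that delivers the sharp bound $l\le k-2$, rather than the $l\le k-1$ one would naively obtain by counting derivative orders alone. This refinement relies crucially on the two features that every monomial of the numerator contains an undifferentiated factor of $u$ or $v$ and that the denominator is a nonvanishing analytic function, both of which are special to the Lorentzian-harmonic system and will need to be tracked carefully through the inductive computation.
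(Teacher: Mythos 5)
Your proof is correct, but it takes a genuinely different route from the paper's. The paper solves a \emph{Cauchy} problem along the non-characteristic diagonal $(t,t)$, prescribing $u,u_x,v,v_x$ there as polynomials \eqref{PDEintialdata}, invoking Cauchy--Kowalevski, and then showing that the (triangular) map $\psi$ from the jet coefficients $({\bf a}_1,{\bf a}_2,{\bf b}_1,{\bf b}_2)$ to the coefficients of the Cauchy data is a diffeomorphism of $\mathbb R^{4n}$. You instead solve the \emph{characteristic} (Goursat/Darboux) problem with data on the two null axes; the payoff is that the axis restrictions $u(x,0)$, $u(0,y)$, $v(x,0)$, $v(0,y)$ have Taylor coefficients that \emph{are} the free jet coefficients $a_{k0},a_{kk},b_{k0},b_{kk}$, so no change of variables like $\psi$ is needed at all. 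The cost is that you must appeal to the analytic Goursat existence theorem rather than plain Cauchy--Kowalevski; this is classical (the recursion your induction exhibits determines the formal solution uniquely from the axis data, and a majorant argument gives convergence), but it is a different black box, and the paper's choice of a non-characteristic curve also matches the loop-group Cauchy problem it solves later in \S\ref{gcpsection}. A further difference worth noting: your combinatorial argument that every surviving term in $\partial_x^{k-i-1}\partial_y^{i-1}F(0,0)$ involves only derivatives of total order at most $k-2$ (because every monomial of $F$ carries a bare factor of $u$ or $v$ vanishing at the origin, forcing $p\ge 1$ and hence the bound $k-1-p\le k-2$) is precisely the justification of the jet structure \eqref{n-jets(u,v)}, which the paper states without proof before the theorem; your writeup therefore actually proves slightly more than the paper's does. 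I checked the count: with $p\ge1$ bare factors and two first-order derivative factors, distributing $k-2$ derivatives so that each bare factor receives at least one caps every resulting derivative order at $k-2$, and the corner compatibility for the Goursat data is indeed automatic since all four polynomials vanish at the origin. No gaps.
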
 

\begin{proof}
We consider the Cauchy problem given by the PDE \eqref{sys:Pdes} with the following conditions along the non characteristic curve $(t,t)$:
\begin{equation}\label{PDEintialdata}
\begin{cases}
u(t,t)=\sum_{i=1}^n\alpha_it^i,\\ 
u_x(t,t)=\sum_{i=0}^{n-1}\beta_{i+1}t^i,\\ 
v(t,t)=\sum_{i=1}^n\lambda_it^i,\\ 
v_x(t,t)=\sum_{i=0}^{n-1}\mu_{i+1}t^i.
\end{cases}
\end{equation}

We set $\alpha=(\alpha_i)_{1\le i\le n}$, 
$\beta=(\beta_i)_{1\le i\le n}$,
$\lambda=(\lambda_i)_{1\le i\le n}$,
$\mu=(\mu_i)_{1\le i\le n}$.
Let $(u,v)$ be an analytic solution to the above Cauchy problem (which exists by Cauchy-Kowalevski's Theorem).
Then the $n$-jets of $u$ and $v$ must be in the form \eqref{n-jets(u,v)}. 
We have by \eqref{PDEintialdata}
$j^nu(t,t)=\sum_{i=1}^n\alpha_it^i,$ $j^{n-1}u_x(t,t)=\sum_{i=0}^{n-1}\beta_{i+1}t^i,$ 
$j^n v(t,t)=\sum_{i=1}^n\lambda_it^i,$  $j^n v_x(t,t)=\sum_{i=0}^{n-1}\mu_{i+1}t^i.$
Comparing coefficients in \eqref{PDEintialdata} gives
\begin{equation}\label{diffeo}
\begin{cases}
\alpha_i=a_{i0}+a_{ii}+\sum_{j=1}^{i-1}a_{ij},\\ 
\beta_i=ia_{i0}+\sum_{j=1}^{i-1}(i-j)a_{ij}\\ 
\lambda_i=b_{i0}+b_{ii}+\sum_{j=1}^{i-1}b_{ij},\\ 
\mu_i=ib_{i0}+\sum_{j=1}^{i-1}(i-j)b_{ij}.
\end{cases}
\end{equation}

Using the fact that $a_{ij}=P_{ij}(a,b)$ and  $b_{ij}=P_{ij}(b,a)$, $1\le j\le i-1$, are polynomial functions in $a_{l0},a_{ll},b_{l0},b_{ll}$, for $1\le l\le i-2$, it is clear that the map
$\psi:\mathbb R^{4n}\to \mathbb R^{4n}$, with 
$(\alpha,\beta,\lambda,\mu)=
\psi({\bf a}_1,{\bf a}_2,{\bf b}_1,{\bf b}_2)$ defined by the relations in \eqref{diffeo}
is a diffeomorphism. 
We solve the Cauchy problem for  $(\alpha,\beta,\lambda,\mu)=\psi({\bf a}_1,{\bf a}_2,{\bf b}_1,{\bf b}_2)$ in \eqref{PDEintialdata}. A solution to this problem gives the required analytic Lorentzian-harmonic map into the 2-sphere with $n$-jet determined by $({\bf a}_1,{\bf a}_2,{\bf b}_1,{\bf b}_2)$.
\end{proof}


\section{Singularities and bifurcations for the wave front case} \label{sec:wavefront_case}

\subsection{Pseudospherical surfaces as parallels of regular surfaces}\label{subs:pSpherical_parallels}

We seek a regular surface $g$ which has $f$ as one of its parallels. This is equivalent to finding a scalar $r$ for which the surface $g=f+r N$ is regular. With notation as in \S\ref{sec:harmonicmaps}, we have 
$$
g_x\times g_y=AB\left((1-r^2)\sin\phi +2r \cos\phi \right)N.
$$

The quadratic equation $(1-r^2)\sin\phi +2r \cos\phi =0$ in $r$ has two solutions  
$r_i=(\cos\phi +(-1)^i)/\sin\phi$, $i=1,2$.
At a regular point $p_0$ of $f$, $r_1$ and $r_2$ are the radii of curvature of $f$ at $p_0$.
At a singular point $p_0$ of $f$, $r_2$ goes to infinity and $r_1=0$. In both cases $p_0$ regular or singular point of $f$, we can choose a neighbourhood $U$ of $p_0$ on $f$ such that $r$ is not a solution of the above quadratic equation for all points in $U$. 
Then $g$ is a regular surface at points in $U$ and $f=g-rN$ is its parallel with distance $-r$. 

In $U$, the Gaussian and mean curvatures of $g$ are given by 
$$
K_g=-\frac{\sin\phi}{(1-r^2)\sin\phi +2r \cos\phi},\qquad H_g=\frac{r\sin\phi-\cos\phi}{(1-r^2)\sin\phi +2r \cos\phi}.
$$

We have some immediate consequences from this: 

(a) The parabolic set of $g$ corresponds to the singular set of $f$. 

(b) We have $H_g^2-K_g=1/((1-r^2)\sin\phi +2r \cos\phi)^2$, so $g$ restricted to $U$ is an umbilic free surface. Its principal directions are given by $dy^2-dx^2=0$ and its asymptotic directions are the solutions of 
$$r dy^2-2(r \cos\phi+\sin\phi)dxdy-r dx^2=0.
$$

(c) Finally, we have
$$
(1+r^2)K_g+2r H_g +1=0,
$$
so $g$ is a linear Weingarten surface.

\subsection{Bifurcations in generic 1-parameter families}

As we suppose in this section that $f$ is a wave front, it is a 
parallel of a regular surface (see \S\ref{subs:pSpherical_parallels}), so we can use
the results in \cite{bruceParallel} to study its singularities in a similar way to the case of spherical surfaces in \cite{dbft}. 

Singularities of parallels of a general surface $g: 
\Omega \to \mathbb R^3$  
are studied by Bruce in \cite{bruceParallel} (see also \cite{fukuihasegawa}).
Bruce considered the family of distance squared functions $F_{t_0}:\Omega\times \mathbb R^3\to \mathbb R$ 
given by $F_{t_0}((x,y),q)=|g(x,y)-q|^2-t_0^2$. 
A parallel $W_{t_0}$ of $g$ is 
the discriminant of $F_{t_0}$, that is,
$$
W_{t_0}=\left\{ q\in \mathbb R^3:\exists (x,y)\in \Omega 
\,\,\mbox{\rm such that}\,\, 
F_{t_0}((x,y),q)=\frac{\partial F_{t_0}}{\partial x}((x,y),q)=\frac{\partial F_{t_0}}{\partial y}((x,y),q)=0 
\right\} .
$$

For $q_0$ fixed, the function $F_{q_0,t_0}(x,y)=F_{t_0}(x,y,q_0)$ 
gives a germ of a function at a point on the surface. Varying
$q$ and $t$ gives a $4$-parameter family of functions $F$. 
Let $\mathcal R$ denote the group of germs of diffeomorphisms from the plane to the plane.
Then, by a transversality theorem in \cite{looijenga}, for a generic surface, the possible singularities of $F_{q_0,t_0}$ 
are those of $\mathcal R$-codimension 4, and these are as follows (with  $\mathcal R$-models, up to a sign, in brackets): 
$A_1^{\pm}$ ($x^2\pm y^2$), $A_2$ ($x^2+ y^3$), 
$A_3^{\pm}$ ($x^2\pm y^4$), $A_4$ ($x^2+ y^5$) and $D_4^{\pm}$ ($y^3\pm x^2y$).  

When the family $F_{t_0}$ is an $\mathcal R$-versal deformation of the $A_3$-singularity, the parallel is a swallowtail. It can happen that $F_{t_0}$ fails to be an $\mathcal R$-versal deformation of the $A_3$-singularity. In this case we denote the singularities by {\it non-transverse $A_3^{\pm}$}.

Bruce showed that $F$ is always an $\mathcal R$-versal family 
of the $A_1^{\pm}$ and $A_2$ singularities. Consequently the parallels at such singularities are, respectively, regular surfaces or cuspidal edges. In particular, the $A_2$-transitions in wave fronts (\cite{ArnoldWavefront}) do not occur on parallels of surfaces (\cite{bruceParallel}).

For the codimension 1 singularities in parallels, we observe that, for pseudospherical surfaces, once $g=f+rN$ is fixed (i.e., once $r$ is fixed), the only parallel of $g$ with constant negative Gaussian curvature is $f$. Therefore, one needs to consider  1-parameter families of pseudospherical surfaces
in order to possibly realise the generic bifurcations that occur in parallels of surfaces by varying $r$.

\begin{proposition}\label{prop:NoD_4}
The $D_4^{\pm}$-singularities do not occur on pseudospherical surfaces, either in the wave front or non-wave front cases.
\end{proposition}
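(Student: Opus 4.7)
The plan is to handle the wave front and non-wave front cases by separate arguments, exploiting the parallel representation in the first and the rank identity $\rank(df)=\rank(dN)$ in the second.

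For the wave front case, I would recycle the parallel representation from \S\ref{subs:pSpherical_parallels}, writing $f$ locally as $f = g - rN$ where $g = f + rN$ is regular. The explicit calculation given there yields
\[
H_g^2 - K_g \;=\; \frac{1}{\bigl((1-r^2)\sin\phi + 2r\cos\phi\bigr)^2} \;>\; 0,
\]
so $g$ is umbilic-free on a neighbourhood of any wave front point of $f$. I would then invoke Bruce's classification of singularities of parallels \cite{bruceParallel}: a $D_4^\pm$ singularity of a parallel of a generic surface $g$ corresponds to the distance-squared generating family $F_{q_0,t_0}$ acquiring a corank-$2$ critical point, which in turn forces $q_0$ to be a focal point at an umbilic of $g$. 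Since $g$ has no umbilics, $D_4^\pm$ cannot occur in this case.

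For the non-wave front case, the key lever is the rank identity $\rank(df)=\rank(dN)$, immediate from \eqref{assocfrontaleqn} together with $|f_x|=|N_x|$ and $|f_y|=|N_y|$. Since $D_4^\pm$ is a corank-$2$ wave front singularity, its presence would force $df(p)=0$ and hence $dN(p)=0$, so the Legendrian lift $L=(f,N)$ would satisfy $dL(p)=0$. But a $D_4^\pm$ singularity in the sense of \cite{ArnoldWavefront} is a Legendre singularity, requiring $L$ to be an immersion at $p$. This contradiction rules out $D_4^\pm$ at every non-wave front singular point (where, by Proposition~\ref{prop:f_frontal_wavefront}, $L$ is already non-immersive).

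As an independent check I would also verify via Theorem~\ref{theo:kjetLorentzH} that the corresponding $2$-jet of $f$ is too degenerate to support $D_4^\pm$: the corank-$2$ condition translates to $a_{10}=a_{11}=b_{10}=b_{11}=0$, and \eqref{n-jets(u,v)} then forces $j^2u = a_{20}x^2+a_{22}y^2$ and $j^2v = b_{20}x^2+b_{22}y^2$ (with no $xy$-terms, since $P_{21}$ is polynomial in products of the vanishing first-order coefficients). Substituting into $f_x = N\times N_x$ and $f_y = -N\times N_y$ and integrating yields $f_{xy}(p) = 0$, giving the diagonal $2$-jet $(-b_{20}x^2+b_{22}y^2,\,a_{20}x^2-a_{22}y^2,\,0)$. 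The main obstacle is turning this observation into a self-contained $\mathcal{A}$-theoretic exclusion of $D_4^\pm$: I expect this to require checking that no $D_4^\pm$ normal form admits a $2$-jet with vanishing cross-partial, which is most painlessly sidestepped by the Legendrian-immersion argument above.
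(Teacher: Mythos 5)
Your proof is correct, and its core argument --- a $D_4^{\pm}$ point forces $\rank(\dd f_p)=0$ while the Legendrian lift of the $D_4^{\pm}$ normal form is an immersion, forcing $\rank(\dd N_p)=2$ and contradicting $\rank(\dd f)=\rank(\dd N)$ --- is exactly the paper's proof; note that this argument needs no case split, since it applies verbatim at wave front points as well. Your separate umbilic-free argument for the wave front case is precisely the alternative argument the paper records in the remark following the proposition, and your jet-based ``independent check'' is, as you yourself acknowledge, not conclusive on its own (for one choice of sign the quadratic part of the $D_4^{\pm}$ normal form is simultaneously diagonalizable, so a diagonal $2$-jet does not by itself exclude it).
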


\begin{proof} 
The surface $f$ has a $D_4^{\pm}$-singularity if it is locally diffeomorphic to the image of the map $(s,t)\mapsto (st,s^2 \mp 3t^2,s^2t \mp  t^3)$ (see for example \cite{fukuihasegawa}). Then $f$ has rank 0 at the $D_4^{\pm}$-singularity and we can show that $N$ has rank 2 at that point. This cannot happen on pseudospherical surfaces as $f$ and $N$ have the same rank. 
\end{proof}

\begin{remark}
In the case when $f$ is a wave front, 
\S \ref{subs:pSpherical_parallels}
provides another argument why the $D_4^{\pm}$-singularities do not occur on pseudospherical surfaces. Such singularities occur at umbilic points of the surface $g$, and we pointed out that $g$ is an umbilic free surface. 
\end{remark}

\begin{theorem}\label{theo:bif_Wavefront_case}
(1) The stable singularities $A_2$ (cuspidal edge) and $A_3$ (swallowtail) can occur on pseudospherical surfaces (Figure \ref{fig:StablePSS} and Figure \ref{figCSB}, \cite{singps,ishimach}). 

(2) The codimension 1-singularities 
non-transverse $A_3^+$ (cuspidal lips), non-transverse $A_3^-$ (cuspidal beaks) and $A_4$ (cuspidal butterfly) can occur on 
pseudospherical surfaces. 

(3) The evolution of parallels at a non-transverse $A_3^{\pm}$ and $A_4$ can be  realized in generic  1-parameter families of pseudospherical surfaces;  see Figure \ref{fig:familiesPSS} and Figure \ref{figbutterflybif}. 

(4) The cuspidal lips, beaks and butterfly bifurcations are the only ones that can occur in generic 1-parameter family of pseudospherical wave fronts.
\end{theorem}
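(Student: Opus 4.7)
The proof splits naturally into two halves: existence of the listed singularities and bifurcations (parts (1)--(3)) via explicit jet constructions, and completeness (part (4)) via Bruce's classification of generic parallels. The main engine for existence is to combine the jet realizability result Theorem \ref{theo:kjetLorentzH} with the recognition criteria for wave-front singularities in Theorem \ref{theo:critiria}: any algebraic condition on a finite jet of $N$ which, via \eqref{assocfrontaleqn}, guarantees a prescribed singularity type for the associated pseudospherical surface $f$ can be realized by a genuine analytic Lorentzian-harmonic map.

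Part (1) concerns stable singularities, for which the existence of cuspidal edges and swallowtails on pseudospherical surfaces is already known from \cite{singps, ishimach}, to which we refer. For part (2), for each of cuspidal lips, cuspidal beaks and cuspidal butterfly the plan is: (i) using $f_x = N \times N_x$ and $f_y = -N \times N_y$, express the singular-set function $\sigma(x,y) = \det(f_x, f_y, N)$ and the null vector field $\eta$ along the singular set as polynomials in the jet coefficients $(a_{i0}, a_{ii}, b_{i0}, b_{ii})$ of $N$ given by \eqref{n-jets(u,v)}; (ii) translate the criteria of Theorem \ref{theo:critiria} into algebraic conditions on these coefficients --- for example, for cuspidal lips, $\sigma(0) = 0$, $\dd\sigma(0) = 0$ and $\hess \sigma(0)$ definite --- together with the wave front condition $(a_{10}, b_{10}) \neq 0 \neq (a_{11}, b_{11})$ coming from Proposition \ref{prop:f_frontal_wavefront}; (iii) exhibit a specific numerical jet satisfying these conditions and apply Theorem \ref{theo:kjetLorentzH} to realize it by an analytic Lorentzian-harmonic map, whose associated pseudospherical surface then has the required singularity.

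For part (3), promote the construction in (ii) to a smooth 1-parameter family $\{N_\lambda\}$ whose $\lambda = 0$ member carries the prescribed codimension 1 singularity, obtained by letting the jet coefficients depend analytically on $\lambda$ and applying Theorem \ref{theo:kjetLorentzH} pointwise in $\lambda$. The bifurcation diagrams of Figure \ref{fig:familiesPSS} are then produced provided the family is versal in Bruce's sense, which reduces to transversality of a finite-jet map between finite-dimensional spaces. Since Theorem \ref{theo:kjetLorentzH} supplies $4n$ free parameters at the level of $n$-jets, there is ample room to arrange this by an appropriate choice of $\lambda$-derivative of the jet coefficients. Part (4) is then immediate: by Proposition \ref{prop:f_frontal_wavefront} and \S \ref{subs:pSpherical_parallels}, a wave-front pseudospherical surface is a parallel of a regular umbilic-free linear Weingarten surface, so Bruce's classification in \cite{bruceParallel} restricts the generic codimension 1 singularities of $f$ to non-transverse $A_3^{\pm}$, $A_4$ and $D_4^{\pm}$; Proposition \ref{prop:NoD_4} rules out $D_4^{\pm}$, leaving precisely cuspidal lips, cuspidal beaks and cuspidal butterfly.

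The main obstacle is the transversality verification in part (3). While the abundance of free jet parameters makes it morally clear that versality of the deformation can be arranged, doing so rigorously requires matching the infinitesimal variation of $f_\lambda$ against a versal unfolding direction in the space of distance-squared functions $F_{t_0}$ of Bruce, and this in turn demands the higher-order jet computations of $\sigma$, $\eta\sigma$ and $\eta\eta\sigma$ already set up in (i)--(ii). These computations are lengthy but mechanical, as they amount to polynomial manipulations in the explicit jet coefficients produced by \eqref{n-jets(u,v)}.
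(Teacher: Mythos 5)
Your treatment of parts (1)--(3) follows essentially the same route as the paper: translate the recognition criteria of Theorem \ref{theo:critiria} into polynomial conditions on the jet coefficients $(a_{i0},a_{ii},b_{i0},b_{ii})$, realize those conditions by Theorem \ref{theo:kjetLorentzH}, and then promote to $1$-parameter families. (For part (3) the paper does not re-derive versality of Bruce's distance-squared family from scratch as you propose, but instead invokes the ready-made geometric realization criteria of Theorems 4.1 and 4.2 of \cite{dbft}, which package exactly the transversality check you defer; your sketch is workable but would need those higher-order computations carried out, and as written it remains at the level of ``morally clear''.)

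There is, however, a genuine gap in your part (4). You call it ``immediate'' from Bruce's classification together with Proposition \ref{prop:NoD_4}, but Bruce's list of possible singularities of parallels rests on a transversality theorem (Looijenga) that holds for \emph{generic} surfaces $g$. The surfaces $g=f+rN$ arising here are far from generic: they are linear Weingarten surfaces, a restricted class, and a priori a singularity that has codimension $\ge 2$ among parallels of arbitrary surfaces could occur with codimension $1$ (or even stably) within the class of pseudospherical surfaces. This is precisely the point the paper flags at the start of its proof of item (4): ``singularities more degenerate than those occurring in generic $1$-parameter families of parallels could occur generically in $1$-parameter families of pseudospherical surfaces.'' Ruling this out requires an argument \emph{internal} to the class: the paper shows, using the $4n$-dimensional jet parametrization of Theorem \ref{theo:kjetLorentzH} and the transversality framework of \S \ref{sec:frontal}, that any more degenerate wave-front singularity imposes at least $4$ independent conditions on the coefficients of $j^kN$, hence defines a stratum that a generic $1$-parameter family of pseudospherical surfaces avoids. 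Note also that ``generic $1$-parameter family'' must itself be defined relative to deformations within the class of pseudospherical surfaces (via the Monge--Taylor map of \S \ref{sec:frontal}), not relative to all deformations of $f$ as a map into $\real^3$; your appeal to genericity in Bruce's sense quietly conflates the two. Without this step the completeness claim in (4) is unproved.
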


\begin{proof}
The type of a singularity of $f$ at a given point $p$ is determined by a certain $n$-jet of $f$ at $p$. By Theorem \ref{theo:kjetLorentzH}, 
we have all the possible $n$-jets of $N$, and hence of $f$ at any given point on $f$.  
We use the setting in Theorem \ref{theo:kjetLorentzH} to express the conditions for a given singularity (of a wave front) to occur on $f$ in terms of the coefficients 
that determine $N$. Theorem \ref{theo:kjetLorentzH} assures that such Lorentzian-harmonic maps exist.

Items (1) and (2): Using the setting in Theorem \ref{theo:kjetLorentzH}, the 1-jet of a defining equation of the singular set $\Sigma$ of $f$, which is the same as that of $N$, is given by
$$
j^1\sigma=b_{11}a_{10}-b_{10}a_{11}+2(a_{20}b_{11}-a_{11}b_{20})x+2(a_{10}b_{22}-a_{22}b_{10})y.
$$

The pseudospherical surface is singular at the origin if, and only if, 
\begin{equation}\label{eq:Sigma}
b_{11}a_{10}-b_{10}a_{11}=0.
\end{equation}

Suppose that $f$ is singular at the origin and that  $\Sigma$ is a regular curve, that is, 
$a_{20}b_{11}-a_{11}b_{20}\ne 0$ or 
$a_{10}b_{22}-a_{22}b_{10}\ne 0$. We compute the 3-jet of $\sigma$ as well as that of a vector field $\eta$ giving the null direction 
along $\Sigma$.
We have $\eta$ transverse to $\Sigma$ at the origin if, and only if, 
\begin{equation}\label{open_Cond_CuspidalEdge}
\begin{cases}
b_{11}a_{10}-b_{10}a_{11}=0,\\
b_{11}(a_{20}b_{11}-a_{11}b_{20})+b_{10}(a_{10}b_{22}-a_{22}b_{10})\ne 0.
\end{cases}
\end{equation}
Then $f$ is a cuspidal edge by Theorem \ref{theo:critiria}(i). 

The null direction $\eta$ has first order contact with $\Sigma$ at the origin if, and only if, 
\begin{equation}\label{opnen_Cond_Swall}
\begin{cases}
b_{11}a_{10}-b_{10}a_{11}=0,\\
b_{11}(a_{20}b_{11}-a_{11}b_{20})+b_{10}(a_{10}b_{22}-a_{22}b_{10})=0,\\
6b_{33}a_{10}b_{10}^2-6b_{30}a_{11}b_{11}^2-
6a_{33}b_{10}^3+
6a_{30}b_{11}^3+\\
4b_{20}b_{22}a_{10}b_{11}-
4b_{20}b_{22}a_{11}b_{10}+
12a_{20}b_{22}b_{10}b_{11}-12a_{22}b_{20}b_{10}b_{11}+\\
a_{10}^3b_{11}^3+3a_{10}^2a_{11}b_{10}b_{11}^2-3a_{10}a_{11}^2b_{10}^2b_{11}
+6a_{10}b_{10}^2b_{11}^3- a_{11}^3b_{10}^3-
6a_{11}b_{10}^3b_{11}^2\ne 0.
\end{cases}
\end{equation}
Then $f$ is a swallowtail by Theorem \ref{theo:critiria}(ii). 
(Observe that, in general, the second condition in \eqref{opnen_Cond_Swall} is  distinct from that for $N$ to have a cusp singularity, see the proof of Theorem \ref{theo:singLOrzHar} and Remark \ref{rem:sing_f&N}.)

The null direction $\eta$ has second order contact with $\Sigma$ at the origin if, and only if, the first two conditions in \eqref{opnen_Cond_Swall} are satisfied, the left hand side of the third vanishes and a polynomial in $a_{i,0},a_{i,i},b_{i,0},b_{i,i}$, $i\le i\le 4$, does not vanish (the polynomial is too lengthy to reproduce here, but we can choose $N$ so that it does not vanish).
When this happens, the singularity of $f$ is a cuspidal butterfly by Theorem \ref{theo:critiria}(iii).

\smallskip

Consider now $\Sigma$ singular, that is 
$b_{11}a_{10}-b_{10}a_{11}= a_{20}b_{11}-a_{11}b_{20}=a_{10}b_{22}-a_{22}b_{10}=0$. We can take, without loss of generality, $a_{10}\ne 0$, so that the 2-jet of $\sigma$ becomes
$$
j^2\sigma=-3\frac{a_{11}}{a_{10}}
(a_{10}b_{30}-a_{30}b_{10})x^2+
3(a_{10}b_{33}-a_{33}b_{10})y^2.
$$

Clearly, $\sigma$ can have a Morse singularity or type $A_1^+$ or $A_1^-$ 
(provided $(a_{10}b_{30}-a_{30}b_{10})\ne 0$ and $(a_{10}b_{33}-a_{33}b_{10})\ne 0$. We cannot have $a_{11}=0$ as that would imply $b_{11}=0$, that is, $N_y(0,0)=0$). When it has an 
$A_1^+$-singularity,  $f$ has a cuspidal lips by  Theorem \ref{theo:critiria}(iv). 
At an $A_1^-$-singularity, the null direction $(a_{11},a_{10})$ 
is transverse to the branches of $\Sigma$ if, and only if, 
$$
(a_{10}b_{30}-a_{30}b_{10})a_{11}^3-
(a_{10}b_{33}-a_{33}b_{10})a_{10}^3\ne 0.$$ When this is the case, the singularity of $f$ is a cuspidal beaks by Theorem \ref{theo:critiria}(v).

Item (3): For the realization of the generic bifurcations of the singularities non-transverse $A_3^{\pm}$ and $A_4$, we established in Theorem 4.1 and Theorem 4.2 in \cite{dbft} general geometric criteria for determining when such bifurcations are realized. The criteria depend on certain $n$-jets of the family of surfaces. 
We use those criteria 
to construct generic 1-parameter families of Lorentzian-harmonic maps using Theorem \ref{theo:kjetLorentzH}. These give the 1-parameter families of pseudospherical surfaces 
which realize the generic bifurcations of parallels at non-transverse $A_3^{\pm}$ and  $A_4$ singularities.   
In \S \ref{sec:Loop_Groups} we use the loop group method to construct examples of 1-parameter families of pseudospherical surfaces that realise these evolutions.

Item (4):
A priori, singularities more degenerate than those occurring in generic 1-parameter families of parallels could occur generically in 1-parameter families of pseudospherical surfaces. However, for such singularities to occur, we will require at least 4 conditions on the coefficients of $j^kN$. A transversality argument (see \S \ref{sec:frontal} for details) shows that such singularities can be avoided  on pseudospherical wave fronts.  Therefore, the only non-stable local singularities that can occur in generic 1-parameter families of pseudospherical wave fronts are those in items (2) and (3) above.
\end{proof}

%

\section{The non-wave front case}\label{sec:frontal}

As pointed out in the introduction, there is so far no analogous theory of generating families for wave fronts (\cite{arnoldetal}) that deals with 
deformations of frontals. 
When the pseudospherical surface $f$ is not a wave front we will use transversality in the jet space to define the notions of codimension and  generic families.

A pseudospherical surface $f$ is determined by a Lorentzian-harmonic map, which in turn is determined 
locally at each point by a pair of functions $(u,v)$. According to Theorem \ref{theo:kjetLorentzH}, the $k$-ket of
$N$ at a given point is determined by the $k$-jets at that point of the four 
functions obtained from $u,v$ by fixing one of the variables.

Given an $m$-parameter family of pseudospherical surfaces $f^s$, we associate to each member 
of the family a pair of functions $(u^s,v^s)$ which determine the Lorentzian-harmonic map associated to $f^s$. 
Let $J^k(p,q)$ denote the space of $k$-jets of maps from $\mathbb R^p$ to $\mathbb R^q$,
and define the 
family of Monge-Taylor maps,
$$
\Phi: (\mathbb R\times \mathbb R\times \mathbb R^m,((0,0),0))\to J^k(1,2)\times J^k(1,2),
$$
given by $\Phi(x,y,s)=\phi^s(x,y)$, where 
$$
\phi^s(x,y)=\left((j^k_xu^s(-,y), j^k_xv^s(-,y)), (j^k_yu^s(x,-), j^k_yv^s(x,-))\right)
$$
with $j^k_xu^s(-,y)$ (resp. $j^k_yu^s(x,-)$)  denoting the Taylor polynomial of order $k$ at $x$ (resp. $y$) of $u^s$ with $y$ (resp. $x$) fixed.

The conditions for a pseudospherical surface $f$ to have a certain type of singularity at a point $(x,y)$ are expressed in terms of the   
coefficients in the Taylor expansions of the 
functions obtained from $u,v$ by fixing one of the variables. These conditions define a variety $V$ in $J^k(1,2)\times J^k(1,2)$ (we take $k$ sufficiently large). We say that a singularity of $f=f^0$ at the origin with $\phi^0(0,0)\in V$ is of {\it codimension $m$} if $m$ is the least integer for which there exists an $m$-parameter family 
$f^s$ of pseudospherical surfaces, with $s$ near zero in $\mathbb R^m$, such that the associated family $\Phi$ above is transverse to $V$. We call the family $f^s$ a {\it generic deformation of} the singularity of $f^0$.

We identify $J^k(1,2)\times J^k(1,2)$ with $\mathbb R^{2k}\times \mathbb R^{2k}$.

\begin{proposition}\label{prop:Tg_MTmap}
The tangent space to the image of the Monge-Taylor map $\phi=\phi^0$ is generated by $\phi_x(0,0)$ and $\phi_y(0,0)$ with
$$
\begin{array}{rcl}
\phi_x(0,0)&=&\left((j^k_xU_1(-,0), j^k_xV_1(-,0)), (j^k_yU_1(0,-), j^k_yV_1(0,-))\right),\\
\phi_y(0,0)&=&\left((j^k_xU_2(-,0), j^k_xV_2(-,0)), (j^k_yU_2(0,-), j^k_yV_2(0,-))\right),
\end{array}
$$
where
$$
\begin{array}{rcl}
U_1&=&u_x(x,y)-u_x(0,0)-(u_x(0,0)u(x,y)+v_x(0,0)v(x,y))u(x,y),\\
U_2&=&u_y(x,y)-u_y(0,0)-(u_y(0,0)u(x,y)+v_y(0,0)v(x,y))u(x,y),\\
V_1&=&v_x(x,y)-v_x(0,0)-(u_x(0,0)u(x,y)+v_x(0,0)v(x,y))v(x,y),\\
V_2&=&v_y(x,y)-v_y(0,0)-(u_y(0,0)u(x,y)+v_y(0,0)v(x,y))v(x,y).
\end{array}
$$
\end{proposition}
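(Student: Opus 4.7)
My plan is to first compute $\phi_x(0,0)$ and $\phi_y(0,0)$ by direct partial differentiation on $J^k(1,2)\times J^k(1,2)$, and then to identify the results with the claimed expressions modulo the natural $SO(3)$-action on the jet space induced by rotations of the target $\SSS^2$. For the direct calculation, I write each slot of $\phi(x,y)$ as a polynomial whose coefficients are pure Taylor derivatives of $u$ or $v$ at $(x,y)$. Differentiating componentwise, the first two slots shift indices by one to give $j^k_xu_x(-,0)$ and $j^k_xv_x(-,0)$ at the origin, while in the last two slots $\partial_x$ produces mixed partials that, by the Lorentzian-harmonic PDE \eqref{sys:Pdes} and Theorem \ref{theo:kjetLorentzH}, reduce to polynomials coinciding with $j^k_yu_x(0,-)$ and $j^k_yv_x(0,-)$. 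This yields
\[
\phi_x(0,0)=\bigl((j^k_xu_x(-,0),j^k_xv_x(-,0)),(j^k_yu_x(0,-),j^k_yv_x(0,-))\bigr),
\]
and an analogous expression for $\phi_y(0,0)$ with $u_x,v_x$ replaced by $u_y,v_y$.

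The second step is to compare these with the claimed expressions via the algebraic identity, verified by direct expansion,
\[
(u_x-U_1,\;v_x-V_1)=u_x(0,0)(1+u^2,uv)+v_x(0,0)(uv,1+v^2),
\]
whose right-hand side is the restriction to the $(u,v)$-chart of the infinitesimal rotation of $\SSS^2$ that takes value $(u_x(0,0),v_x(0,0))$ at the north pole $N(0,0)=(0,0,1)$. Applied along $N$ and lifted to jets, this gives a tangent vector to the $SO(3)$-orbit of $\phi(0,0)$ in $J^k(1,2)\times J^k(1,2)$. Since the singular set and singularity type of a pseudospherical surface are invariant under isometries of $\SSS^2$, the variety $V$ cut out by singularity conditions is $SO(3)$-invariant; hence $\phi_x(0,0)$ may be replaced, for transversality calculations, by its residue modulo this orbit direction, which is precisely the stated jet of $(U_1,V_1)$ in the four slots. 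Replacing $(u_x(0,0),v_x(0,0))$ by $(u_y(0,0),v_y(0,0))$ gives the companion statement for $\phi_y(0,0)$ and $(U_2,V_2)$.

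The main obstacle will be the second step: pinning down the correction as exactly an $SO(3)$-infinitesimal generator, and making the passage modulo the orbit direction rigorous. Using the standard generators $\xi_1(u,v)=(1+u^2,uv)$ and $\xi_2(u,v)=(uv,1+v^2)$ of rotations of $\SSS^2$ about the axes perpendicular to $N(0,0)$, the subtraction $(u_x,v_x)-u_x(0,0)\xi_1(u,v)-v_x(0,0)\xi_2(u,v)$ collapses exactly to $(U_1,V_1)$ with no residual higher-order terms; once this identity is verified and the $SO(3)$-invariance of $V$ is recorded, the conclusion reduces to a reassembly of the four jet components.
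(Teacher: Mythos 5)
Your step 1 differentiates the wrong map, and step 2 does not fully repair this. The Monge--Taylor map $\phi$ is not the assignment of raw Taylor polynomials of the fixed chart functions $u,v$ based at $(x,y)$: it records the jets of the coordinates $(\tilde u,\tilde v)$ of $N$ in the tangent plane at the \emph{moving} point $N(x,y)$. This recentering is forced by the setup: the variety $V$ is cut out by conditions on the coefficients $a_{ij},b_{ij}$ of Theorem \ref{theo:kjetLorentzH}, which are defined in the chart adapted to $N(p)$, and the identification $J^k(1,2)\times J^k(1,2)\cong\real^{2k}\times\real^{2k}$ presupposes jets with vanishing constant term, whereas your ``naive'' derivative $\bigl(j^k_xu_x(-,0),\dots\bigr)$ has constant term $(u_x(0,0),v_x(0,0))\neq(0,0)$ in general and so does not even lie in the tangent space of the identified jet space. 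Consequently the correction terms in $U_1,V_1$ are not an optional quotient; they are part of $\phi_x(0,0)$ itself. The paper's proof writes $\tilde u,\tilde v$ explicitly as functions of $(X,Y)$ and of the base point $(x_0,y_0)$, differentiates with respect to $x_0,y_0$, and evaluates at the origin, which yields exactly $U_1,U_2,V_1,V_2$ with no residual ambiguity.

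That said, your identity $(u_x-U_1,\,v_x-V_1)=u_x(0,0)(1+u^2,uv)+v_x(0,0)(uv,1+v^2)$ is correct and is the right geometric explanation of the correction: the recentering is, infinitesimally, the rotation of $\SSS^2$ carrying $N(x_0,y_0)$ back to the north pole, and the chain rule applied to the composite of $N$ with that moving rotation subtracts precisely this generator. If you replace ``pass to the residue modulo the $SO(3)$-orbit, which is harmless because $V$ is invariant'' by ``the definition of $\phi$ composes with the recentering rotation, so the chain rule subtracts the generator,'' your computation closes up and becomes essentially the paper's proof. As written, however, the invariance argument only establishes the formula up to a tangent vector to an orbit --- which is weaker than the asserted equality --- and the invocation of full $SO(3)$-invariance of $V$ is itself problematic, since only the stabilizer of the north pole acts on the adapted jet space where $V$ actually lives; the orbit direction you quotient by points out of that space.
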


\begin{proof}
The functions $u,v$ give the coordinates $(u,v,1)$ of $N$ in the tangent plane to the sphere at $(0,0,1)$. Denote by $N_0=\delta_0(u_0,v_0,1)$ the vector $N(x_0,y_0)$, for $(x_0,y_0)$ near the origin. Then the coordinates $(\tilde{u}(X,Y),\tilde{v}(X,Y),1)$ of $N(X+x_0,Y+y_0)$ in the tangent plane to the sphere at $N_0$ are given by
$$
\begin{array}{rcl}
\tilde{u}(X,Y)&=&\frac{1}{\lambda(X,Y)}(u(X+x_0,Y+y_0)-u_0),\\
\tilde{v}(X,Y)&=&\frac{1}{\lambda(X,Y)}(-u_0v_0u(X+x_0,Y+y_0)+(1+u_0^2)v(X+x_0,Y+y_0)-v_0),
\end{array}
$$
with $\lambda(X,Y)=\delta_0(1+(u_0)^2)^{\frac{1}{2}}(1+u_0u(X+x_0,Y+y_0)+v_0v(X+x_0,Y+y_0))$.
The result follows by differentiating $\tilde{u}$ and $\tilde{y}$ with respect to $x_0$ and $y_0$ 
and evaluating at the origin.
\end{proof}

\begin{lemma} \label{lem:rank0_codim>=2}
A singularity at $p$ of a  pseudospherical surface $f$ with ${\rm rank} (\dd f_p)={\rm rank} (\dd N_p)=0$ is of codimension $\ge 2$.
\end{lemma}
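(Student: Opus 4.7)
The plan is to reduce the claim to a dimension count in the jet space $J^k(1,2) \times J^k(1,2)$, showing that the subvariety $V$ encoding the rank-$0$ condition has codimension $4$, which forces any deformation transverse to $V$ to use at least $2$ parameters.

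First I would translate the geometric condition into jet coordinates. Choose rotated coordinates on $\SSS^2$ so that $N(p) = (0,0,1)$ and write $N = \delta(u,v,1)$ as in \S\ref{sec:harmonicmaps}. Differentiating gives $N_x(p) = (u_x(p), v_x(p), 0)$ and $N_y(p) = (u_y(p), v_y(p), 0)$, so the condition ${\rm rank}(\dd N_p) = 0$ is equivalent to the four scalar equations
\[
u_x(p) = v_x(p) = u_y(p) = v_y(p) = 0.
\]
Under the definition of the Monge-Taylor map $\phi$, the values $u_x(p), v_x(p)$ are the degree-$1$ coefficients of the $x$-jets of $u$ and $v$ (hence two independent coordinate functions on the first factor of $J^k(1,2) \times J^k(1,2)$), while $u_y(p), v_y(p)$ are the analogous coefficients on the second factor. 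These four coordinate functions are independent, so the rank-$0$ locus $V$ is a linear subspace of codimension $4$ in the ambient jet space.

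Next I would apply the definition of codimension given just before the lemma. For the associated family $\Phi:(\real\times\real\times\real^m,0)\to J^k(1,2)\times J^k(1,2)$ to be transverse to $V$ at the origin it is necessary that $\dd\Phi_{(0,0,0)}$ surject onto the normal space to $V$ at $\phi^0(0,0)$, which has dimension $4$. Since the source of $\Phi$ has dimension $2+m$, this forces $2+m\geq 4$, i.e.\ $m\geq 2$. In particular, no $1$-parameter family can make $\Phi$ transverse to $V$ at a rank-$0$ singularity, so the codimension of the singularity is at least $2$.

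The argument is essentially a pure dimension count and presents no serious obstacle; the only point to check carefully is that the four defining equations of $V$ genuinely correspond to independent coordinates in $J^k(1,2)\times J^k(1,2)$, so that $V$ has codimension exactly $4$ and not less. This is immediate from the product structure of the ambient space and the definition of $\phi$; no input from the harmonic-map equations (nor from Proposition \ref{prop:Tg_MTmap}) is needed for this lower bound, although those would be indispensable for showing that the codimension is actually equal to $2$.
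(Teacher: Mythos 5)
Your argument is correct and is essentially the paper's own proof: both identify the rank-zero locus with the codimension-4 linear subvariety $\{a_{10}=a_{11}=b_{10}=b_{11}=0\}$ of $J^k(1,2)\times J^k(1,2)$ and conclude that transversality of $\Phi$, whose source has dimension $2+m$, forces $m\ge 2$. Your observation that the lower bound follows from the dimension count alone (with Proposition \ref{prop:Tg_MTmap} only needed to pin down the exact codimension) is a fair reading of the paper's terser wording.
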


\begin{proof}
In the setting of \S \ref{sec:harmonicmaps} with $p$ the origin, ${\rm rank} (\dd f_p)={\rm rank} (\dd N_p)=0$ if, and only if, $a_{10}=a_{11}=b_{10}=b_{11}=0$. This gives a codimension $4$ variety $V$ in $J^k(2,1)\times J^k(2,1)$, $k\ge 1$.  Using Proposition \ref{prop:Tg_MTmap}, we find that any family $f_s$ with $f=f_0$ has to be of at least 2-parameters for $\Phi$ to be transverse to $V$.
\end{proof}

As we are interested here in codimension 1 singularities,  we must have ${\rm rank} (\dd f_p)=1$. 
Also,$f$ is not a wave front at $p$ if and only if
 either $N_x$ or $N_y$ vanishes at $p$ (Proposition \ref{prop:f_frontal_wavefront}). 

\begin{theorem}\label{theo:2/5cuspidaledge}
Suppose that $N$ has rank 1, $N_y=0$ or $N_x=0$ and that $\Sigma$ is regular (so it is locally a null curve \cite{singps}). Then generically $f$ is locally a $2/5$-cuspidal edge, i.e., it is locally diffeomorphic to the image of the map $(x,y)\mapsto (x,y^2,y^5)$.  

The bifurcations of $f$ in a generic 1-parameter family $f^s$ of pseudospherical surfaces with $f^0=f$, are as shown in Figure \ref{fig:bif25CuspEdge}. The singular set of $f^s$ is an ordinary 2/3 cuspidale edge for $s\ne 0$, and there is a birth of a double point curve on the surface $f^s$ on one side of the bifurcation ($s<0$ or $s>0$). The singular curve in the source is timelike on one side of the transition and spacelike of the other side.
\end{theorem}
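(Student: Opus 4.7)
The plan is to reduce the theorem to a jet-level computation via Theorem \ref{theo:kjetLorentzH} together with a recognition criterion for the 2/5-cuspidal edge. Placing the singular point at the origin with $N(0,0)=(0,0,1)$, the hypothesis $N_y(0,0)=0$ becomes $a_{11}=b_{11}=0$, and the rank-one assumption forces $(a_{10},b_{10})\ne 0$; an $SO(2)$-rotation of the target tangent plane at $N(0,0)$ normalizes this to $b_{10}=0$, $a_{10}\ne 0$. A direct differentiation of $\sigma:=\det(N,N_x,N_y)$ using the PDE \eqref{sys:Pdes} gives $\sigma_x(0)=0$ and $\sigma_y(0)=2a_{10}b_{22}$, so the regularity hypothesis on $\Sigma$ reduces to $b_{22}\ne 0$. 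Hence $\Sigma$ is locally a graph $y=\psi(x)$ with $\psi(0)=\psi'(0)=0$, tangent to the null direction $\partial_x$ at the origin; this already recovers the fact that the singular curve is locally null.

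For the normal form, I would Taylor-expand $f$ using $f_x=N\times N_x$ and $f_y=-N\times N_y$ together with the recursion \eqref{n-jets(u,v)} to fifth order. The essential structural claim is that, in the basis $\{e_1:=f_x(0,0),\ e_2:=f_{yy}(0,0)\}$ spanning the limiting tangent plane of $f$ at the origin, the components of $f_{yyy}(0,0)$ and $f_{yyyy}(0,0)$ along $e_1\times e_2$ vanish as a rigid consequence of $N\times N_{xy}=0$ and $N_y(0,0)=0$, while the component of $f_{yyyyy}(0,0)$ along $e_1\times e_2$ is a polynomial in the remaining jet parameters that is non-zero on an open dense set. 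The frontal property is automatic by Proposition \ref{prop:f_frontal_wavefront}, while $L_y(0,0)=0$ shows that the Legendrian lift $L=(f,N)$ fails to be an immersion, so $f$ is not a wave front. Combining these orders of contact with the standard recognition criterion for the 2/5-cuspidal edge (see the survey \cite{Ishikawa_Frontals}), one concludes that $f^0$ is $\mathcal A$-equivalent to $(x,y^2,y^5)$.

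For the bifurcation picture, let $f^s$ be a 1-parameter family with $f^0=f$. The jet-space condition $u_y(p)=v_y(p)=0$ cuts out a codimension-2 variety, but combined with the one-dimensional freedom of the singular point $p$ ranging over $\Sigma$, it imposes a codimension-1 condition on the family; transversality of the Monge-Taylor map $\Phi$ (Proposition \ref{prop:Tg_MTmap}) at $s=0$ amounts to a non-degeneracy condition on $\partial_s N_y^s(p_s)\big|_{s=0}$. For small $s\ne 0$ the nearby singular point $p_s$ then satisfies $N_x^s(p_s)\ne 0\ne N_y^s(p_s)$, so $f^s$ is a wave front at $p_s$ by Proposition \ref{prop:f_frontal_wavefront}, and the open conditions \eqref{open_Cond_CuspidalEdge} persist by continuity, giving an ordinary $2/3$ cuspidal edge via Theorem \ref{theo:critiria}(i). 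The tangent direction of $\Sigma^s$ at $p_s$ is of the form $\partial_x+c(s)\partial_y$ with $c(0)=0$ and $c'(0)\ne 0$ by the transversality hypothesis; since the Lorentz form in null coordinates is proportional to $dx\,dy$, the sign of $c(s)$ determines whether $T_{p_s}\Sigma^s$ is timelike or spacelike, and the character flips as $s$ crosses $0$. Finally, the birth of the double-point curve is detected by studying the perturbation of the normal form: the unfolding $(x,y^2,y^5+s y^3+\cdots)$ inherited from the 2/5 model has a self-intersection $y\mapsto -y$ precisely when the coefficient of $y^3$ and the fifth-order data have opposite signs, yielding a smooth curve of double points on exactly one side of $s=0$.

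The main obstacle is the cancellation of the $y^3$ and $y^4$ Taylor coefficients of $f$ in the transverse direction in paragraph two: this must be shown to be a rigid consequence of \eqref{sys:Pdes} rather than of the chosen normalization, which requires carrying the recursion \eqref{n-jets(u,v)} symbolically through order five and exploiting the antisymmetry between the $u$- and $v$-equations together with the cross-product structure of $f_x$ and $f_y$; the remaining steps in the bifurcation analysis are comparatively routine once the $(x,y^2,y^5)$ model is in hand.
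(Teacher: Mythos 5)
Your overall strategy --- reduce to the jet data of Theorem \ref{theo:kjetLorentzH}, verify that the transverse Taylor expansion of $f$ has contact of orders exactly $2$ and $5$, and then read the bifurcation off a normal form $(x,y^2,y^5+sy^2k(x,y,s))$ --- is essentially the route the paper takes, and your computations of $j^1\sigma$, of the jet-space transversality conditions, and of the double-point equation all match the paper's. But there is a genuine hole in the passage from the $5$-jet at the origin to $f\sim_{\mathcal A}(x,y^2,y^5)$. The germ $(x,y^2,y^5)$ is not finitely $\mathcal A$-determined (its non-immersive locus is the whole curve $y=0$), so verifying that $f_{yyy}(0,0)$ and $f_{yyyy}(0,0)$ have no component along $N(0)$ while $f_{yyyyy}(0,0)$ does is not sufficient: you need this order of contact at \emph{every} point of the singular curve. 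The paper gets this uniformity because $N_y$ vanishes identically along the null curve $\Sigma=\{y=0\}$ --- a consequence of $\partial_x\langle N_y,N_y\rangle=0$, which follows from the harmonic map equation (this is the content of the citation to \cite{singps}) --- so the transverse-section computation is the same at every basepoint on $\Sigma$, giving $f\sim_{\mathcal A}(x,y^2h,y^5k)$ with $h,k$ units, after which Mond's theorem finishes the reduction. Your remark that first-order tangency of $\Sigma$ to $\partial_x$ ``already recovers the fact that the singular curve is locally null'' is the symptom of this gap: tangency at one point does not make $\Sigma$ a null curve, and without $N_y\equiv 0$ along $\Sigma$ the along-the-curve argument is unavailable. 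If instead you wish to invoke a pointwise recognition criterion for the $2/5$-cuspidal edge, you must cite a precise statement; the survey \cite{Ishikawa_Frontals} alone does not supply one.

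Two smaller points in the bifurcation analysis also need repair. The conditions \eqref{open_Cond_CuspidalEdge} are formulated at a singular point of a wave front, and at $s=0$ the surface fails to be a wave front at \emph{every} point of $\Sigma^0$, so there is nothing for them to ``persist by continuity'' from; the correct argument is either that the null direction $\eta=B\partial_x-\varepsilon_1A\partial_y$ at nearby singular points tends to a multiple of $\partial_y$ as $s\to0$ while $\sigma_y\neq 0$ there, or (as in the paper) to read the ordinary cuspidal edge directly off the reduced family $(x,y^2,y^3(y^2+sp))$. Relatedly, your unfolding $(x,y^2,y^5+sy^3+\cdots)$ requires justifying that $q_y(x,y,s)$ is divisible by $y$; this comes from the fact that the singular set of $f^s$ remains a regular curve for all small $s$ (transversality of $\Phi$ to the stratum $\Sigma$ in jet space), not merely from the $s=0$ model.
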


\begin{proof}
We suppose, without loss of generality, that the point of interest is the origin and that $N_y=0$ and $N_x\ne 0$ at that point. As $\Sigma$ is supposed to be regular (equivalently $[N,N_x,N_{yy}]\ne 0$ at the origin), it is locally the null curve $y=0$, see \cite{singps}.

We consider the orthonormal frame $N(0), N_x(0)/A, N(0)\times N_x(0)/A$, with $A=|N_x(0)|$. The tangent to the image of the singular set at the origin is $f_x(0)=N(0)\times N_x(0)$. We intersect the image of $f$ with the plane at the origin generated by 
$N(0), N_x(0)$. This gives a curve $\gamma$ on $f$ parametrised by $\gamma(y)=(\langle f(0,y), N(0)\rangle, \langle f(0,y), N_x(0)/A\rangle)$. It follows by the hypothesis and using the fact that $N$ is Lorentzian-harmonic that
$$
\gamma(y)=\left(\frac{2}{5!}[N(0),N_{yy}(0),N_{yyy}(0)]y^5+O(6),\frac{1}{2A}[N(0),N_{yy}(0),N_x(0)]y^2+O(3)\right),
$$
where $O(l)$ denotes a remainder of order $l$. 
Therefore, the curve $\gamma$ is $\mathcal A$-equivalent to $(y^2,y^5)$ provided 
$[N(0),N_{yy}(0),N_x(0)]\ne 0$ (which we assumed already) and $[N(0),N_{yy}(0),N_{yyy}(0)]\ne 0$. These conditions are satisfied by generic pseudospherical surfaces. 
Observe that in the above calculations there is nothing special about the origin on the singular set, that is, all the local transverse sections of $f$ yield curves with singularities $\mathcal A$-equivalent to $(y^2,y^5)$. 
It follows that 
$f$ is $\mathcal A$-equivalent to $(x,y^2h(x,y),y^5k(x,y))$, with $h,k$ germs of smooth functions not vanishing at the origin. Further changes of coordinates set 
$f\sim_{\mathcal A}(x,y^2,y^5\tilde k(x,y))$, with $\tilde k(0,0)\ne 0.$ Using Theorem 4.1.1 in \cite{Mond}, we 
get $f\sim_{\mathcal A}(x,y^2,y^5(1+p(x,y^2)))$. The diffeomorphism $(u,v,w)\mapsto (u,v,w/(1+p(u,v)))$ gives $f\sim_{\mathcal A}(x,y^2,y^5)$ as required. 

The non-wave front stratum in $J^k(1,2)\times J^k(1,2)$, $k\ge 1$, is the union of $F_1: a_{11}=0, b_{11}=0$ with 
$F_2: a_{10}=0, b_{10}=0$. As we assumed $N_y=0$ and $N_x\ne 0$ ($N$ has rank 1), the component of interest is $F=F_1: a_{11}=0, b_{11}=0$.

The tangent space to $F$ is the intersection of the kernels of the 1-forms $\xi_1=da_{11}$ and 
$\xi_2=db_{11}$.

We can work in $J^1(1,2)\times J^1(1,2)\equiv\mathbb R^2\times \mathbb R^2$. Then $\phi_x(0,0)=((2a_{20},2b_{20}),(0,0))$, and  $\phi_y(0,0)=((0,0),(2a_{22},2b_{22}))$. It is clear that $\xi_1(\phi_x(0,0))=\xi_2(\phi_x(0,0))=0$, so $\phi$ is not transverse to $F$. In particular, a pseudospherical surface which is a 2/5-cuspidal edge is not stable.

Consider a 1-parameter family of pseudospherical surfaces generated by a pair of 1-parameter family of functions $u^s,v^s$ with $u^0=u$, $v^0=v$. We have
\begin{equation}\label{Derivative_Phi_s}
\small{
\frac{\partial \Phi}{\partial s}((0,0),0)=
((j^k_x\frac{\partial u^s}{\partial s}((-,0),0), j^k_x\frac{\partial v^s}{\partial s}((-,0),0)), (j^k_y\frac{\partial u^s}{\partial s}((0,-),0), j^k_y\frac{\partial v^s}{\partial s}((0,-),0)))}.
\end{equation}

For $k=1$, and working in $J^1(1,2)\times J^1(1,2)$, we have  
$\frac{\partial \Phi}{\partial s}=((\frac{\partial^2 u^s}{\partial s\partial x},\frac{\partial^2 v^s}{\partial s\partial x}),(\frac{\partial^2 u^s}{\partial s\partial y},\frac{\partial^2 v^s}{\partial s\partial y}))$ at the origin.
Then $\Phi$ fails to be transverse to $F$ at $\Phi((0,0),0)$ if, and only if, there exist scalars $c,d$ such that 
$\xi_1(c\phi_y+d\frac{\partial \Phi}{\partial s})=\xi_2(c\phi_y+d\frac{\partial \Phi}{\partial s})=0$ at the origin, alternatively,  $\xi_1(\phi_y)\xi_2(\frac{\partial \Phi}{\partial s})-
\xi_1(\frac{\partial \Phi}{\partial s})\xi_2(\phi_y)=0$ at the origin. Therefore, $\Phi$ is  transverse to $F$ at $\Phi((0,0),0)$ if, and only if,
$$
(a):\quad  \left(b_{22}\frac{\partial^2 u^s}{\partial s\partial y}-a_{22}\frac{\partial^2 v^s}{\partial s\partial y}\right)((0,0),0)\ne 0.
$$ 

Clearly, we can choose $u^s,v^s$ with this property. 
This shows that the 2/5-cuspidal edge singularity in pseudospherical surfaces is a codimension 1 phenomenon.
We need to consider the intersection of $\Phi$ with the stratum of singular Lorentzian-harmonic maps, given by $\Sigma: a_{11}b_{10}-a_{10}b_{11}=0$ (see \eqref{eq:Sigma}). At a point on $F$, the tangent space to $\Sigma$ is the kernel of the 1-form 
$\mu=-b_{10}da_{11}+a_{10}db_{11}$. 
We have $\mu(\phi_x(0,0))=0$ and $\mu(\phi_y(0,0))=-b_{10}a_{22}+a_{10}b_{22}$. 
Therefore $\phi=\phi^0$ is transverse to $\Sigma$ if, and only if, $b_{10}a_{22}-a_{10}b_{22}\ne 0$.
When this is the case, $(\phi^s)^{-1}(\Sigma)$ is a regular curve on the pseudospherical surface $f^s$ for all $s$ near $0$. 
We know from \cite{singps} that $(\phi^0)^{-1}(\Sigma)$ is a null curve of $f^0$. We take a 
family of pseudospherical surfaces with $\Phi$ transverse to $F$. Then, 
if 
$$(b): \quad  \left(b_{10}\frac{\partial^2 u^s}{\partial s\partial x}-a_{10}\frac{\partial^2 v^s}{\partial s\partial x}\right)((0,0),0)\ne 0,
$$ 
$(\phi^s)^{-1}(\Sigma)$ is timelike for $s<0$ and spacelike for $s>0$ or vice-versa. We can choose $(u^s,v^s)$ satisfying conditions (a) and (b). 

To complete our study, we need to consider the multi-local singularities of the members of the family $f^s$, namely the double point curve of $f^s$. Multi-local local singularities are hard to deal with using the jet space method as they do not depend on a finite number of the coefficients of the Taylor expansion of the map-germ. As the double point curve is $\mathcal A$-invariant, we proceed as follows.

Any 1-parameter family $f^s$ of $f=f^0$ is $\mathcal A$-equivalent a family 
$g(x,y,s)=(x,y^2,q(x,y,s))$ (this follows from the fact the map-germ from the plane to the plane $(x,y)\mapsto (x,y^2)$ is stable).
We can take $q(x,y,0)=y^5$. 
It follows from the fact that the singular set is $\mathcal A$-invariant that $q_y(x,y,s)=y(5y^3+sh(x,y,s))$ 
for some germ of a family of analytic functions $h$. 
Therefore, $g(x,y,s)=(x,y^2,y^5+sy^2k(x,y,s))$.

The map $g^s(x,y)=g(x,y,s)$ has a double point 
$g^s(x_1,y_1)=g^s(x_2,y_2)$ with $(x_1,y_1)\ne (x_2,y_2)$ if, and only if, $x_1=x_2=x$, $y_2=-y_1$ and
\begin{equation}\label{eq:dpt2/5}
y_1^5+sy_1^2k(x,y_1,s)=-y_1^5+sy_1^2k(x,-y_1,s).
\end{equation}
Writing $k(x,y,s)=yp(x,y^2,s)+l(x,y^2,s)$, equation \eqref{eq:dpt2/5} becomes $y_1^3(y_1^2+sp(x,y_1^2,s))=0$.
Therefore, if $p(0,0,0)<0$ (resp. $p(0,0,0)>0$), there are two regular curves
 in the source which are mapped to the same regular curve in the target (the double point curve of $g^s$) for $s>0$ and none for $s<0$ (resp. vice-versa).

Using our initial setting, and reducing the initial $k$-jets of $f^s$ to the required form, we find that the condition $p(0,0,0)\ne 0$ is given by
$$
b_{10}b_{22}\frac{\partial^2 u^s}{\partial s\partial y}((0,0),0)+
(2a_{22}b_{10}-3a_{10}b_{22})\frac{\partial^2 v^s}{\partial s\partial y}((0,0),0)\ne 0.
$$

Clearly, we can choose the family $f^s$ to satisfy also this condition.
\end{proof}

A surface 
$f:\mathbb R^2,0\to \mathbb R^3,0$ is called a {\it Shcherbak surface} if $f$ is $\mathcal A$ equivalent to the map-germ $(x,y)\mapsto (x,xy^2+y^3,xy^4+\frac{6}{5}y^5)$; see Figure \ref{fig:Shcherbak} (middle) and Figure \ref{figshcherbakbif}.

\begin{theorem}\label{theo:2/5CuspidalBeaks}
Suppose that $N$ has rank 1 and that $N_y=0$ or $N_x=0$. In generic 1-parameter families of 
pseudospherical surfaces $f^s$, the singular set of $f=f^0$ can have a Morse $A_1^{-}$-singularity, where one of the branches is a null curve and the other is  transverse to both null curves at the singular point. 
Then $f^0$ is a Shcherbak surface.
The deformations in the family $f^s$ as $s$ varies near zero are as shown in  Figure \ref{fig:Shcherbak} and Figure \ref{figshcherbakbif}. We have a birth of two swallowtail singularities on one side of the transition and none on the other side. On the side where there are no swallowtail singularities, we have a birth of two cusp singularities of the double point curve.
\end{theorem}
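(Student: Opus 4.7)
The plan is to parallel the proof of Theorem \ref{theo:2/5cuspidaledge}: set up the jet-space conditions encoding the hypotheses, identify the $\mathcal A$-normal form of $f^0$, verify that the Shcherbak singularity has codimension one in our sense, and analyse the bifurcation in a generic unfolding. Without loss of generality take $p=(0,0)$, $N(0,0)=(0,0,1)$, $N_y(0,0)=0$ and $N_x(0,0)\neq 0$, so in the notation of Theorem \ref{theo:kjetLorentzH} $a_{10}\neq 0$ and $a_{11}=b_{11}=0$. Since $f_y=-N\times N_y$, the null curve $y=0$ is automatically one branch of $\Sigma$, and writing the defining function as $\sigma=y\,\tilde\sigma(x,y)$ the Morse $A_1^-$ hypothesis with second branch transverse to both null directions becomes $\tilde\sigma(0,0)=0$ together with $\tilde\sigma_x(0,0)\neq 0$ and $\tilde\sigma_y(0,0)\neq 0$; via Theorem \ref{theo:kjetLorentzH} these turn into one vanishing polynomial equation and two open conditions on the coefficients $a_{ij},b_{ij}$.

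For the $\mathcal A$-normal form I would work in the orthonormal frame $(N(0),N_x(0)/A,N(0)\times N_x(0)/A)$ and inspect slices of $f$ exactly as in the proof of Theorem \ref{theo:2/5cuspidaledge}. The slice $y\mapsto f(0,y)$ is $\mathcal A$-equivalent to $(y^2,y^5)$ under the nondegeneracies $[N,N_{yy},N_x](0)\neq 0$ and $[N,N_{yy},N_{yyy}](0)\neq 0$, while $x\mapsto f(x,0)$ is regular. The new feature is that the second (transverse) branch of $\Sigma$ forces $f$ to drop rank along a curve of slope different from $0$ and $\infty$; this introduces a non-zero $xy^2$ term in the second coordinate and a non-zero $xy^4$ term in the third. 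Applying the preparation-theorem reductions of \cite{Mond} to eliminate the remaining higher-order terms, and rescaling the source and target, brings $f$ to the Shcherbak normal form $(x,xy^2+y^3,xy^4+\tfrac{6}{5}y^5)$.

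For the codimension, apply Proposition \ref{prop:Tg_MTmap} with the Shcherbak stratum $V_{\mathrm{Sh}}\subset J^k(1,2)\times J^k(1,2)$ cut out by $a_{11}=b_{11}=0$ together with one extra polynomial equation expressing the existence of the second smooth branch of $\Sigma$ through the origin. The tangent-space computation in the proof of Theorem \ref{theo:2/5cuspidaledge} shows that $\phi^0_x$ and $\phi^0_y$ lie in the tangent space to the non-wave front stratum $F$, so transversality of $\Phi$ to $V_{\mathrm{Sh}}$ at $((0,0),0)$ reduces to the non-vanishing of one explicit polynomial in $\partial^2 u^s/\partial s\partial x$, $\partial^2 v^s/\partial s\partial x$, $\partial^2 u^s/\partial s\partial y$ and $\partial^2 v^s/\partial s\partial y$ at $((0,0),0)$, which can be arranged by choice of unfolding, establishing codimension one.

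For the bifurcation, write an unfolding of $f^0$ in Shcherbak form as $(x,xy^2+y^3+sp(x,y,s),xy^4+\tfrac{6}{5}y^5+sq(x,y,s))$ for smooth $p,q$. The perturbed defining function $\sigma^s$ has a non-degenerate indefinite quadratic part at the origin, so the Morse crossing deforms, according to the sign of $s$, either to two smooth disjoint local components of $\Sigma$ or to the empty real set. On the side where two components survive, each carries a $2/3$-cuspidal edge on which $\eta\sigma^s$ acquires a pair of simple zeros close to the origin, producing two swallowtails by Theorem \ref{theo:critiria}(ii). On the opposite side only the null branch $y=0$ remains, as an ordinary $2/3$-cuspidal edge, so swallowtails are absent and the multi-local phenomena must instead appear as cusps of the double-point curve; these are located by solving $f^s(x_1,y_1)=f^s(x_2,y_2)$ with $(x_1,y_1)\neq(x_2,y_2)$ as in the final step of the proof of Theorem \ref{theo:2/5cuspidaledge}: the system separates into $y_1+y_2=0$ together with an equation that, expanded along $y_1=-y_2$, factors as $y_1^{3}\,Q(x,y_1,s)=0$, with $Q$ defining a planar curve having two ordinary cusps for the appropriate sign of $s$. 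The main obstacle I anticipate is this last step: the interaction between the two branches of $\Sigma$ under perturbation makes it delicate to rule out additional multi-local phenomena (triple points, tacnodes) in a neighbourhood of $s=0$ and to certify exactly the counts ``two swallowtails'' on one side and ``two cusps of the double-point curve'' on the other, requiring a careful real-solution analysis of the double-point equations for both signs of $s$.
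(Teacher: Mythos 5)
Your overall strategy (jet conditions, normal form, transversality for the codimension, then the unfolding) is the right skeleton, but two steps contain genuine problems. First, the identification of $f^0$ as a Shcherbak surface cannot be completed by ``preparation-theorem reductions of \cite{Mond}'' as you propose. That device worked for the $2/5$-cuspidal edge because the underlying plane map $(x,y)\mapsto(x,y^2)$ is stable, so the first two components can be normalized once and for all and only the third component needs taming. Here the first two components form a beaks germ $(x,xy^2+y^3)$, which is not stable, and the $\mathcal A$-determinacy of the full three-component germ is not accessible by that route. The paper instead computes the $5$-jet of $f^0$ from Theorem \ref{theo:kjetLorentzH}, checks $j^5f^0\sim_{\mathcal A^{(5)}}(x,xy^2+y^3,xy^4+\frac{6}{5}y^5)$, and then invokes Ishikawa's recognition criterion for frontal singularities (Theorem 3.19 of \cite{IshikawaRecogFrontal}), whose hypotheses are verified geometrically: one branch of the singular set carries a $2/5$-cuspidal edge and the other an ordinary cuspidal edge. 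This recognition step is an essential ingredient that your argument is missing.

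Second, your description of the bifurcation is topologically wrong. The singular set has an $A_1^-$ (indefinite) Morse point, so for $s\neq 0$ it deforms to a hyperbola-like pair of smooth branches \emph{on both sides} of the transition; it is never empty, and the statement ``on the opposite side only the null branch $y=0$ remains'' is false (for $s\neq0$ the curve $y=0$ is not in the singular set at all). You have imported the lips ($A_1^+$) picture into a beaks situation. Consequently your mechanism for why swallowtails appear on only one side does not work. The correct mechanism, as in the paper, is that the swallowtail stratum $SW$ pulls back under $\Phi$ to a curve in $(x,y,s)$-space having Morse contact with the plane $s=0$ (parametrized as $y\mapsto(-3y,y,3y^2)$ in the normal form), so it projects entirely to one sign of $s$; the $A_0A_2$ locus (cusps of the double point curve) is likewise a regular curve $y\mapsto(2y+O_2(y),y,-\tfrac34y^2+O_3(y))$ lying on the opposite side of the tangent plane of the singular-set surface $3y^2+2xy+s=0$. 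Establishing the second curve requires the explicit control of the third component $q(x,y,s)=P(x,y,s)+sQ(x,y,s)$ obtained by integrating $q_y=(3y^2+2xy+s)(2y^2+sk)$, and then solving the double-point system to get $2y_1+3y_2+O_2(y_1,y_2)=0$; your sketch of a factorization $y_1^3Q=0$ is the $2/5$-cuspidal-edge computation and does not carry over verbatim. You correctly flagged the double-point analysis as the delicate step, but the fix is not a more careful real-solution count within your picture; the picture itself needs to be replaced by the two-transverse-curves-on-opposite-sides argument.
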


\begin{proof}
Following the proof of Theorem \ref{theo:2/5cuspidaledge} and the notation there, the singular set is singular when $\phi=\phi^0$
is not transverse to the variety $\Sigma$ in $J^k(2,1)\times J^k(2,1)$, and this happens when $b_{10}a_{22}-a_{10}b_{22}=0$.
Then, the family $\Phi$ is transverse to $\Sigma$ if, and only if, $\mu(\frac{\partial \Phi}{\partial s})\ne 0$, that is,
$b_{10}\frac{\partial^2 u^s}{\partial s\partial y}-a_{10}\frac{\partial^2 v^s}{\partial s\partial y}\ne 0$. As, $b_{10}a_{22}-a_{10}b_{22}=0$, transversality occurs  if, and only if, 
$b_{22}\frac{\partial^2 u^s}{\partial s\partial y}-a_{22}\frac{\partial^2 v^s}{\partial s\partial y}\ne 0$, that is if, and only if, $\Phi$ is transverse to $F$. We can choose a 1-parameter family $f^s$ so that $\Phi$ is transverse to $\Sigma$ (and hence to $F$), so the singularity of $f=f^0$ is of codimension 1.

The swallowtail stratum is given by the first two equations in \eqref{opnen_Cond_Swall}, namely
$$
SW:
\begin{cases}
b_{11}a_{10}-b_{10}a_{11}=0,\\
b_{11}(a_{20}b_{11}-a_{11}b_{20})+b_{10}(a_{10}b_{22}-a_{22}b_{10})=0.
\end{cases}
$$

Clearly $\phi$ intersects the $SW$-stratum when $\phi(0,0)\in F\cap \Sigma$.
Following similar calculations to those in the proof of Theorem \ref{theo:2/5cuspidaledge}, we can show that $SW$ is a regular codimension 2 variety. 
As $\phi$ is not transverse to $\Sigma$ at $\phi(0,0)$, it is not transverse to the $SW$-stratum. The family $\Phi$ is transverse to this stratum if, and only if, it is transverse to $\Sigma$.

\begin{figure}[tp]
    \centering
    \includegraphics[height=5cm]{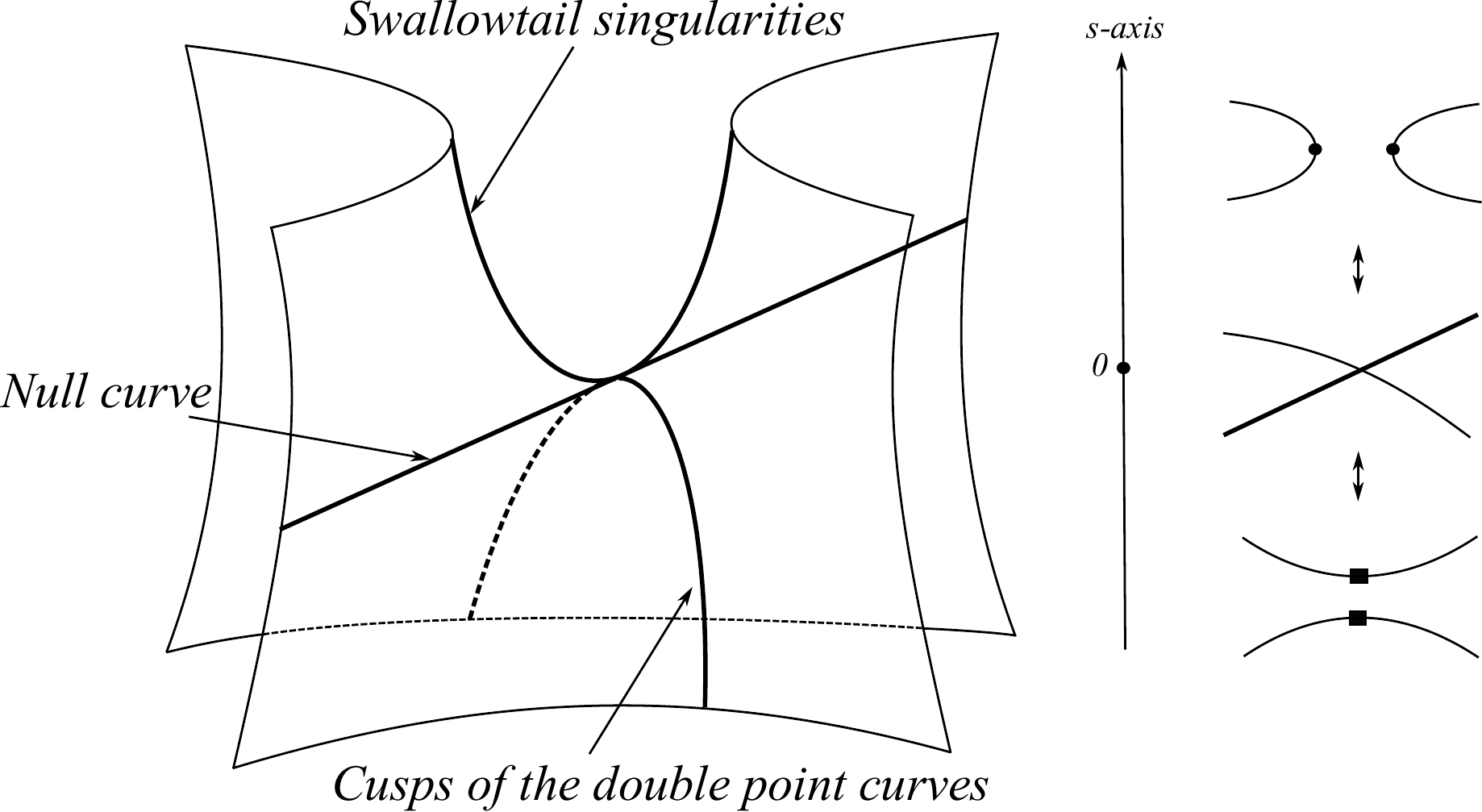}
    \caption{The surface formed by the singular sets of $f^s$ (left) and its sections by the planes $s=constant$ (right). The swallowtail singularities (resp. cusps of the double point curve) are represented by the circular (resp. square) dots.}
    \label{fig:BigSingSet}
\end{figure}

We choose a 1-parameter family of pseudospherical surfaces $f^s$, with $f=f^0$ so that $\Phi$ is transverse to $\Sigma$. 
Then $\Phi^{-1}(\Sigma)$ is a smooth surface and the pre-images of $F$ and $SW$ strata are regular curves on $\Phi^{-1}(\Sigma)$  (see Figure \ref{fig:BigSingSet}). 

The plane $s=0$ has generically a Morse $A_1^{-}$-contact with $\Phi^{-1}(\Sigma)$ and a Morse contact with the curve $\Phi^{-1}(SW)$. Indeed, assuming without loss of generality that $a_{10}\ne 0$, the
2-jet of the equation of the singular set is given, for $s=0$, by
$$-4\frac{a_{22}}{a_{10}}(a_{10}b_{20}-a_{20}b_{10})xy+(3a_{10}b_{33}-3a_{33}b_{10})y^2,
$$
so we have a Morse $A_1^-$-contact if, and only if, ${a_{22}}(a_{10}b_{20}-a_{20}b_{10})\ne 0$. Suppose that this is the case. Then 
following the algorithm for recognition of the swallowtail singularity, we find that the 2-jet of a parametrisation of $\Phi^{-1}(SW)$ has the form
$$
\left(\frac{3a_{10}(a_{10}b_{33}-a_{33}b_{10})}{2a_{22}(a_{10}b_{20}-a_{20}b_{10})}y+
\Lambda y^2,y,-3y^2(a_{10}b_{33}-a_{33}b_{10})\right),
$$
with $\Lambda$ an irrelevant constant. 
The curve above has a Morse contact with the plane $s=0$ if, and only if, $a_{10}b_{33}-a_{33}b_{10}\ne 0$, that is, the branch of the singular set 
which is not a null curve is transverse to the other null curve.
The above conditions are also satisfied in generic 1-parameter families of pseudospherical surfaces. 

To show that $f^0$ is a Shcherbak surface, we use Theorem \ref{theo:kjetLorentzH} to obtain the 5-jet of $N$ given all the conditions imposed on $a_{ij}$ and $b_{ij}$ in this proof (geometrically, we impose that the singular set has a Morse $A_1^-$-singularity with one branch a null curve and 
the other branch transverse to the other null curve). We then integrate  \eqref{assocfrontaleqn} to obtain the 5-jet of $f^0$. We find that $j^5f^0
\sim_{\mathcal A^{(5)}}(x,xy^2+y^3,xy^4+\frac{6}{5}y^5)$ (see \S \ref{sec:Classification} for notation). 
As one of the cuspidal edges of $f^0$ is a $2/5$-cuspidal edge and the other one is an ordinary cuspidal edge, it follows by Ishikawa's criteria for recognition of a Shcherbak surface (Theorem 3.19 in \cite{IshikawaRecogFrontal}) that 
$f^0\sim_{\mathcal A}(x,xy^2+y^3,xy^4+\frac{6}{5}y^5)$, that is, $f^0$ is a Shcherbak surface.

We now deal with the double point curve of $f^s$. Following the method in \cite{BruceMarar}, the equation of double point curve of the model Shcherbak surface $(x,y)\mapsto (x,xy^2+y^3,xy^4+\frac{6}{5}y^5)$ is given by $y^2(x^2-2xy-4y^2)=0$. This means that double point curve of $f^0$ 
has a non-isolated singularity, so it does not have a model for its deformations with a finite number of parameters. (Also, the method in \cite{BruceMarar} is hard to use here as we do not have an explicit formula for the defining equation of the image of $f^s$.) In what follows, we determine the locus of points in the $(x,y,s)$-space where the double point curve of $f^s$ has a cusp singularity, i.e., where $f^s$ has a multi-local singularity of type $A_0A_2$, which consists of a local  intersection of a cuspidal edge with an immersion.

We take the family $f^s$ as above, so its singular set undergoes a generic Morse transition. Then $f^s$ is equivalent, by parametrised changes of coordinates in the source and target, to a family $g(x,y,s)=(x,y^3+xy^2+sy,q(x,y,s))$ with 
$q(x,y,0)=xy^4+\frac{6}{5}y^5$. (This follows from the fact that 
the family of map-germs $(x,y,s)\mapsto (x,y^3+xy^2+sy)$ is an $\mathcal A_e$-versal family of the beaks-singularity $(x,y)\mapsto (x,y^3+xy^2)$.)
The singular set of $g^s$ is given by 
$3y^2+2xy+s=0$ ($g^s$ is $\mathcal A$-equivalent to $f^s$), therefore
$q_y(x,y,s)=(3y^2+2xy+s)(2y^2+sk(x,y,s))$ for some germ of a family of analytic functions $k$.
Integrating by parts, we get that 
$q(x,y,s)=P(x,y,s)+sQ(x,y,s)$ with
$$
\begin{array}{l}
P(x,y,s)=xy^4+\frac{6}{5}y^5+\frac{2}{3}sy^3,\\
Q(x,y,s)=(3y^2+2xy+s)h_{yy}(x,y,s)-(6y+2x)h_y(x,y,s)+6h(x,y,s),
\end{array}
$$
for some germ of a family of analytic functions $h$. 
Suppose that $g^s$ has a cuspidal edge singularity at $(x_1,y_1)$ 
and there exists $(x_2,y_2)\ne (x_1,y_1)$ such that $g(x_1,y_1,s)=g(x_2,y_2,s)$, i.e., $g^s$ has an $A_0A_2$-singularity. 
Writing $g=(g_1,g_2,g_3)$, then $x_1=x_2=x$, and the conditions $g^2_y(x,y_1,s)=0$ and 
$(g_2(x,y_1,s)-g_2(x,y_2,s))/(y_1-y_2)=0$ give 
\begin{equation}\label{eq:xs}
x=-2y_1-y_2, \quad s=y_1^2+2y_1y_2.
\end{equation}
Now, with $x$ and $s$ as in \eqref{eq:xs}, we get 
$$
\frac{P(x,y_1,s)-P(x,y_1,s)}{y_1-y_2}=(y_1-y_2)^3(2y_1+3y_2).
$$
We prove, by considering the homogeneous part of $Q$ of any degree, for $x$ and $s$ as in \eqref{eq:xs}, that
$$
\frac{Q(x,y_1,s)-Q(x,y_1,s)}{y_1-y_2}=(y_1-y_2)^3L(y_1,y_2)
$$
for some analytic function $L$.
Thus, from $(g_3(x,y_1,s)-g_3(x,y_2,s))/(y_1-y_2)=0$, and with $x,s$ as in \eqref{eq:xs}, we get $2y_1+3y_2+O_2(y_1,y_2)=0$. 
It follows that the set of points where $g^s$ has an $A_0A_2$-singularity is a regular analytic curve in the $(x,y,t)$-space parametrised by
$y\mapsto (2y+O_2(y),y,-3/4y^2+O_3(y)).$ (It is worth observing that there is no extra condition on $g$ for the existence and regularity of this curve.)
On the other hand, the swallowtail singularities of $g^s$ occur along the curve in $(x,y,s)$-space parametrised by $y\mapsto (-3y,y,3y^2).$ 
Clearly, the above two curves lie on opposite side of the tangent plane at the origin of the surface of the singular sets $3y^2+2xy+s=0$. 
All the above properties are $\mathcal A$-invariant, so 
we get the configuration in Figure \ref{fig:BigSingSet} (left)
for the family $f^s$ of pseudospherical surfaces.
Using this, and taking into consideration the deformation of $2/5$-cuspidal-edge (Figure \ref{fig:bif25CuspEdge}), we can draw the generic bifurcations 
in the family $f^s$ as shown in Figure \ref{fig:Shcherbak}.
\end{proof}

\begin{remark}
The $2/5$-cuspidal edge and Shcherback bifurcations are the only ones that can occur in generic 1-parameter families of pseudospherical surfaces $f^s$ with $f^0$ not a wave front. This follows from the fact that 
any other bifurcation would mean the singular set of $f^0$ has either a singularity more degenerate than Morse or a Morse singularity with one branch a null curve and the other branch tangent to the other null curve. These cases determine varieties of codimension $\ge 4$ in the jet space, so cannot occur generically in 1-parameter families of pseudospherical surfaces.
\end{remark}

\section{Construction via loop groups}\label{sec:Loop_Groups}
In this section we will show how to construct pseudospherical
surfaces with prescribed singularities using loop groups.

\subsection{Loop group construction of pseudospherical frontals}
 See \cite{singps} for more details of the following outline.
We identify $\real^3 = \mathfrak{su}(2)$ with inner product
$\langle X, Y \rangle = -2\trace(XY)$ and an orthonormal basis
\bdm
e_1=\frac{1}{2}\bbar 0 & i \\ i & 0 \ebar, \quad
e_2=\frac{1}{2}\bbar 0 & -1 \\ 1& 0 \ebar, \quad
e_3=\frac{1}{2}\bbar i & 0 \\ 0 & -i \ebar.
\edm

Let $\mathcal{G}$ denote the group of smooth maps (loops) $\gamma: \SSS^1 \to SL(2,\C)$ that satisfy the
\emph{twisting condition}
$\gamma(\lambda)= \Ad_P \gamma(-\lambda)$, where $P = \hbox{diag}(-1,1)$ and
the \emph{reality condition} $\gamma(\lambda) = \overline{\gamma^t(\bar \lambda)}^{-1}$.
The loops are assumed to be of a suitable class such that each loop extends holomorphically
to an annulus around $\SSS^1 \subset \C^*$. The reality condition means that a loop takes values in $SU(2)$ for
real values of the loop parameter $\lambda$. The twisting condition is standard (see, e.g. \cite{guest1997}) in the loop group
representation of harmonic maps into the symmetric space 
$\SSS^2 = SU(2)/K$ where  the diagonal subgroup $K$ is the fixed point set of
the involution $[x \mapsto \Ad_P x]$ of $SU(2)$.

Let $\Omega \subset \real^{1,1}$ be a simply connected open set.
An \emph{admissible frame} is a map $\hat F: \Omega \to \mathcal{G}$ such that the 
Fourier expansion of the \emph{Maurer-Cartan form} $\hat F^{-1} \dd \hat F$ is a 
Laurent polynomial of the form
\[
\hat F^{-1} \dd \hat F = (A_{1} \dd x) \lambda + (A_0 \dd x + B_0 \dd y) + (B_{-1} \dd y) \lambda^{-1},
\]
where $(x,y)$ is any local null-coordinate system. For any map $\hat Z$ into $\mathcal{G}$,
write $Z = \hat Z \big|_{\lambda =1}$.  Given an admissible frame, define
$N: \Omega \to \SSS^2$  and $f: \Omega \to \real ^3 = \mathfrak{su}(2)$ by:
\[
N = \Ad_F e_3, \quad \quad f = \left( \lambda \frac{\partial \hat F}{\partial \lambda} \hat F ^{-1} \right) \Big|_{\lambda =1}.
\]
Then $N$ is a Lorentzian-harmonic map and $f$ is the associated pseudospherical surface with Gauss map $N$.
The problem of constructing pseudospherical frontals $\Omega \to \real^3$ is equivalent to constructing admissible frames, and an admissible frame $\hat{F}$ is determined uniquely by $f$ (or $N$) if we choose a
basepoint $p$ at which $\hat F(p)=I$.

\subsection {The generalized d'Alembert representation}
\label{gdarsection}
M.~Toda \cite{todaagag} gave a method that allows one to
construct all admissible frames from pairs of arbitrary $\C$-valued functions of one variable. 
If $U=(a,b) \times (c,d)$ is a local \emph{box chart} in $\real^{1,1}$, i.e. the coordinates
$x$ on $(a,b)$ and $y$ on $(c,d)$ are oriented null coordinates, a
\emph{potential pair} $(\hat \chi, \hat \psi)$ is a pair of 1-forms with values in the 
Lie algebra $\hbox{Lie}(\mathcal{G})$ of $\mathcal{G}$ of the form
\[
\hat \chi = \sum_{n=-\infty}^1 A_n(x) \lambda^n \dd x, \quad
\hat \psi = \sum_{n=-1}^\infty B_n(y) \lambda^n \dd y.
\]
Because of the twisting and reality conditions, the ``leading''  terms of $\hat \chi$ and $\hat \psi$ are:
\[
\hat \chi_1 = \bbar 0 & \zeta (x) \\ - \overline{\zeta (x)} & 0 \ebar \lambda \dd x, \quad
\hat \psi _{-1} = \bbar 0 &  \xi (y) \\ - \overline{\xi (y)} & 0 \ebar \lambda^{-1} \dd y,
\]
where $\zeta(x)=\zeta_1(x) + i \zeta_2(x)$ and $\xi(y)=\xi_1(y) + i \xi_2(y)$
are arbitrary functions.  A potential pair is called \emph{regular} at a point $(x,y)$ if
both $\hat \chi _1$ and $\hat \psi_{-1}$ are non-zero at $(x,y)$, and a potential pair is
\emph{normalized} if these are the only terms,
i.e. if $\hat \chi = \hat \chi _1$ and $\hat \psi = \hat \psi_{-1}$.

Define subgroups of $\mathcal{G}$ 
\[
\mathcal{G}^\pm := \{x \in \mathcal{G} ~|~ x = \sum_{n=0}^\infty a_n \lambda^{\pm n} \},
\qquad 
\mathcal{G}^\pm_* := \{ x \in \mathcal{G}^\pm ~|~
a_0 = I \}.
\]
The \emph{Birkhoff decomposition} $\mathcal{G} = \mathcal{G}_*^{\pm} \cdot \mathcal{G}^{\mp}$ (see \cite{PreS,jgp}) gives real analytic
 diffeomorphisms $\mathcal{G} \to \mathcal{G}_*^{\pm} \times \mathcal{G}^{\mp}$. Using this, we construct an admissible frame from a potential pair as follows:
\begin{enumerate}
\item Solve $\hat X_+^{-1} \dd \hat X_+ = \hat \chi$, and
$\hat Y_-^{-1} \dd \hat Y_- = \hat \psi$, with initial conditions $\hat X(x_0)=I$ and $\hat Y(y_0)=I$.
\item At each point $(x,y)$ perform the Birkhoff decomposition 
$\hat X(x)^{-1} \hat Y(y) = \hat H_-(x,y) \hat H_+(x,y)$, with  $\hat H_- (x,y) \in \mathcal{G}_* ^-$,
and set
\[
\hat F(x,y) = \hat X(x) \hat H_-(x,y).
\]
\end{enumerate}
Then $\hat F$ is an admissible frame.
The wave front condition for the associated frontal is equivalent to the non-vanishing of the leading order terms $\hat \chi_1$ and $\hat \psi_{-1}$. More precisely:
\[
N_x(x_0,y)\neq 0 \hbox{ for all } y 
\quad \Leftrightarrow \quad  \hat \chi_1 (x_0) \neq 0, 
\quad \hbox{and } N_y(x,y_0) \neq 0 
 \hbox{ for all } x 
 \quad \Leftrightarrow \quad \hat \psi_{-1} (y_0) \neq 0.
\]

Conversely, given an admissible frame $\hat F$, the
two normalized Birkhoff splittings
\[
\hat F = \hat X_+ G_-  = \hat Y_- G_+, \quad
\hat X_+(x,y) \in \mathcal{G}^+_*, \,\,
\hat Y_-(x,y) \in \mathcal{G}^-_*,
\]
gives a normalized potential pair,
$\hat \chi = \hat \chi_1 = \hat X_+^{-1} \dd \hat X_+$
and
$\hat \psi = \hat \psi_{-1} = \hat Y_-^{-1} \dd \hat Y_-$.
Thus, once a basepoint $p=(x_0,y_0)$ (where $\hat F(p)=\hat X_+(p)=\hat Y_-(p)=I$) is chosen, there is a one-one
correspondence between admissible frames and normalized potential pairs.


\subsection{Solving the Cauchy problem} \label{gcpsection}
In Theorem \ref{theo:kjetLorentzH} we saw that the Cauchy problem for a Lorentzian-harmonic map $N$, with $N$ and $N_x$ prescribed along a non-characteristic curve, 
has a solution, and this can be used to construct the different types of singularities via the $k$-jets.  We now want to show how to construct potential pairs for such solutions. This can be done by modifying the constructions
in \cite{dbms1, singps}, where the Cauchy data was $f$ and $N$ rather than $N$ and $N_x$.

Assuming that $|N_x|>0$, we can choose coordinates such that $|N_x|=1$. An $SU(2)$-frame
$F$ is defined by:
\beq \label{adaptedframe}
N= \Ad_F e_3, \quad N_x = - \Ad_F e_2, \quad N_y = \Ad_F (ae_1 + be_2),
\eeq
where $a:= \langle N_y , N \times N_x \rangle$ and $b:= -\langle N_y, N_x \rangle$.
For this choice, the corresponding admissible frame $\hat F: \Omega \to \mathcal{G}$ has Maurer-Cartan form:
\[
\hat F ^{-1} \dd \hat F = (U_\mfk + U_\mfp \lambda )\dd x
  + (V_\mfk + V_\mfp \lambda^{-1}) \dd y,
  \]
where $\mfk = \hbox{span}\{e_3\}$ and $\mfp = \hbox{span}\{e_1, e_2\}$.
Using $N_x = \Ad_F[F^{-1} F_x, e_3]$
and $N_y = \Ad_F [F^{-1} F_y, e_3]$ we have:
\[
 \quad U_\mfp =  e_1,  \quad V_\mfp = -b e_1 + a e_2.
\]
Using the Lorentzian-harmonic map equations $N \times N_{xy} = 0$  and differentiating the above formulae for $N_x$ and $N_y$
respectively with respect to $y$ and $x$, we obtain:
\[
V_\mfk =0, \quad U_\mfk = c e_3, 
\]
and the integrability of $\hat \alpha=\hat F^{-1} \dd \hat F$,
i.e., $\dd \hat \alpha + \hat \alpha \wedge \hat \alpha=0$ is
equivalent to 
\beq \label{integrability2}
b_x =-ac, \quad a_x =bc, \quad c_y = a.
\eeq

Note: by Lorentzian-harmonicity $N_{xy}$ is parallel to $N$,
which (being $\SSS^2$-valued) is orthogonal to $N_y$. Hence $0=\langle N_{xy},N_y\rangle= 2\frac{\partial}{\partial x} \langle N_y, N_y \rangle$, so $\langle N_y, N_y \rangle = a^2 + b^2$ is constant in $x$, and $a a_x + bb_x = 0$.

To solve the Cauchy problem along the curve $y=x$, use coordinates
$t=(x+y)/2$, $s=(x-y)/2$; then this is the curve $s=0$.   If we are
given $\hat F_0(t) :=\hat F(t,0)$ along this curve, then we can set
$\alpha_0 (t) := \hat F_0 ^{-1} \dd \hat F_0 =   (U_\mfk(t,0) + U_\mfp(t,0) \lambda )\dd t
  + (V_\mfk(t,0) + V_\mfp (t,0) \lambda^{-1}) \dd t, $ and the potential pair
\[
\hat \chi (x) =  \hat \alpha_0(x), \quad \hat \psi(y) = \hat \alpha_0 (y),
\]
will solve the given Cauchy problem (see Section 4.2 of \cite{singps}).

\begin{theorem}\label{gctheorem}
Let $I \subset \real$ be an open interval. Given analytic Cauchy data $N_0: I \to \SSS^2$ and $\mv: I \to \SSS^2$,
with 
\[
 \langle \mv(t) , N_0(t)\rangle =0, \quad \hbox{and} \quad
\langle \mv(t) - N_0^\prime(t) \, , \,  \mv (t) \rangle \not \equiv 0,
\]
 for all $t \in I$, there is a Lorentzian-harmonic map 
$N: I \times I \to \SSS^2$,  unique up to an isometry of $\SSS^2$, and with null coordinates $(x,y)$ on $I \times I$, 
satisfying the initial conditions:
\[
N(t, t)=N_0(t), \quad N_x(t,t) = \mathcal{V}(t).
\]
The solution is produced by the generalized d'Alembert method with potential pair $(\hat \chi, \hat \psi)$
as described above, with 
\[
\hat \alpha_0  = \left(c e_3 +  e_1 \lambda 
  + ( -b e_1 + a e_2) \lambda^{-1}\right) \dd t,
  \]
where
\beqas
c = \varepsilon |\mv^\prime|,  \quad 
  a = \left \langle N_0^\prime\, , \, N_0 \times \mv \right \rangle, \quad
 b=  \left\langle \mv - N_0^\prime \, , \, \mv \right\rangle,
  \eeqas
and $\varepsilon = \hbox{sign}\langle \mv^\prime \times N_0, \mv \rangle$. Further:

\begin{enumerate}
    \item The corresponding pseudospherical surface $f$ is a wavefront at $(t,t)$ if and only if $(a(t),b(t)) \neq (0,0)$.
\item    The solution $N$ (and hence $f$) is regular at the point
 $(x,y)=(t,t)$ if and only if $a(t) \neq 0$.
 \item At a singular point $(t,t)$ on the initial curve, the singular set in $I \times I$ is locally regular if and only if $\left(b(t)c(t), a^\prime(t)\right)~\neq~(0,0)$. 
\end{enumerate}
\end{theorem}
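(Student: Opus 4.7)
The plan is to build the proof in three stages: construct the admissible $SU(2)$-frame $F_0$ along the initial curve $\{y=x\}$ directly from the Cauchy data, read off the corresponding potential pair, and invoke the generalised d'Alembert machinery of \S\ref{gdarsection} to extend to all of $I\times I$. Since $N_0(t)$ and $\mv(t)$ are unit and mutually orthogonal, the ordered triple $(N_0\times\mv,-\mv,N_0)$ forms a smoothly varying right-handed orthonormal frame of $\real^3=\su$, and I would lift this to a smooth $F_0:I\to SU(2)$ with $\Ad_{F_0}e_1=N_0\times\mv$, $\Ad_{F_0}e_2=-\mv$, $\Ad_{F_0}e_3=N_0$, which is unique up to overall sign; since the sign is the kernel of $SU(2)\to SO(3)$, it does not affect $N=\Ad_F e_3$ and accounts for the isometry ambiguity in the statement.

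Next, expanding $F_0^{-1}F_0'=\omega_1 e_1+\omega_2 e_2+\omega_3 e_3$ and differentiating the three identities $\Ad_{F_0}e_i=(\cdot)_i$ gives, via the $\mathrm{ad}$-action in $\su$,
\[
N_0'=\omega_1\mv+\omega_2(N_0\times\mv),\qquad -\mv'=\omega_1 N_0-\omega_3(N_0\times\mv).
\]
Solving these two vector equations yields $\omega_2=\langle N_0',N_0\times\mv\rangle=a$, $\omega_1=\langle N_0',\mv\rangle=1-b$, and $\omega_3=c$, which I would check against the claimed $c=\varepsilon|\mv'|$ for the stated choice of sign $\varepsilon$. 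The $\lambda$-graded assembly $\hat\alpha_0=(ce_3+e_1\lambda+(-be_1+ae_2)\lambda^{-1})\dd t$ is then forced by the bi-grading of an admissible Maurer-Cartan form pulled back to the diagonal $\dd x=\dd y=\dd t$, and the non-characteristicity assumption $\langle \mv-N_0',\mv\rangle\not\equiv0$ is precisely $b\not\equiv0$.

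Setting $\hat\chi(x)=\hat\alpha_0(x)$ and $\hat\psi(y)=\hat\alpha_0(y)$ in the d'Alembert recipe of \S\ref{gdarsection} produces a global admissible frame $\hat F$ on $I\times I$, so $N=\Ad_F e_3$ is automatically Lorentzian-harmonic. A routine adaptation of the restriction argument of \cite[\S 4.2]{singps} (where the Cauchy data was $(f,N)$ instead of $(N,N_x)$) shows $\hat F\big|_{y=x}=\hat F_0$, giving $N(t,t)=N_0(t)$ and $N_x(t,t)=\mv(t)$. Uniqueness up to the isometry of $\SSS^2$ already accounted for then follows from the one-to-one correspondence between admissible frames and normalised potential pairs.

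The three enumerated items reduce to direct computation. For (1), $f$ is a wavefront iff both $N_x$ and $N_y$ are nonvanishing; here $N_x(t,t)=\mv\neq0$, while $N_y(t,t)=a(N_0\times\mv)-b\mv$ vanishes iff $(a,b)=(0,0)$. For (2), regularity of $N$ is equivalent to $N_x\times N_y\neq0$, and the identity $\mv\times[a(N_0\times\mv)-b\mv]=a N_0$ yields the criterion $a\neq0$. For (3), the singular set is $\{a=0\}$ for $a(x,y):=\langle N_y,N\times N_x\rangle$; the integrability relation \eqref{integrability2} gives $a_x=bc$, and differentiating along the diagonal, $a'(t)=a_x(t,t)+a_y(t,t)$, yields $a_y(t,t)=a'(t)-b(t)c(t)$, so $\nabla a(t,t)=(bc,a'-bc)$, which is nonzero iff $(bc,a')\neq(0,0)$. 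The main obstacle is the verification that the d'Alembert extension restricts to $\hat F_0$ on the diagonal and the careful matching of $c$ to $\varepsilon|\mv'|$; both require adapting the calculations of \cite{singps} to the new Cauchy data $(N_0,\mv)$.
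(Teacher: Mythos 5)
Your proposal is essentially the paper's own argument: you set up the adapted frame \eqref{adaptedframe} along the diagonal (your $\Ad_{F_0}e_1=N_0\times\mv$, $\Ad_{F_0}e_2=-\mv$, $\Ad_{F_0}e_3=N_0$ is exactly the frame of \S\ref{gcpsection}), read off $a$, $b$, $c$ as coefficients of the Maurer--Cartan form restricted to $\{y=x\}$, invoke the generalized d'Alembert scheme for existence, and establish (1)--(3) by the same computations the paper uses ($N_y(t,t)=a\,(N_0\times\mv)-b\,\mv$, $N_x\times N_y=a\,N_0$, and $\dd a=bc\,\dd x+(a'-bc)\,\dd y$ along the curve). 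The one step you defer --- checking that $\omega_3$ equals the stated $c=\varepsilon|\mv'|$ --- is the only delicate computation in the proof, so be aware of what it actually yields. Your second frame equation gives $\omega_3=\langle \mv',\,N_0\times\mv\rangle=\langle\mv'\times N_0,\mv\rangle$; decomposing $\mv'$ in the orthonormal basis $\{N_0,\,N_0\times\mv,\,\mv\}$ (using $\mv'\perp\mv$ and $\langle\mv',N_0\rangle=-\langle\mv,N_0'\rangle=b-1$) gives $\mv'=(b-1)N_0+\omega_3\,(N_0\times\mv)$, so $|\omega_3|=|\mv'|$ exactly when $\langle N_0',\mv\rangle=0$, i.e.\ $b=1$. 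Thus the deferred check will not return $\varepsilon|\mv'|$ in general: the coefficient that must go into the potential is $c=\langle\mv'\times N_0,\mv\rangle$, which is what \eqref{potentialformula2} reduces to at $A\equiv 1$ and is what makes $N(t,t)=N_0$ and $N_x(t,t)=\mv$ hold. (The paper's derivation passes through ``$N_0\times\mv'=-\varepsilon|\mv'|\mv$'', which records the direction of $N_0\times\mv'$ correctly but assigns it the length $|\mv'|$, discarding the $N_0$-component of $\mv'$.) Keep your $\omega_3$ as computed and the rest of your argument, including the three enumerated items, goes through unchanged.
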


\begin{proof}
It is enough to find formulas for  $a$, $b$ and $c$, described above, in
terms of the initial data $N_0(t)= N(t,t)$ and $\mv(t)=N_x(t,t)$.
Along $(t,t)$,  in the coordinates $(t,s)=(x+y,x-y)/2$
we have $N_y=N_t-N_x=N_0^\prime - \mv $. Substituting into
 $a:= \langle N_y , N \times N_x \rangle$ and $b:= -\langle N_y, N_x \rangle$
 gives the formulae for $a$ and $b$.  To find  $c=a_x/b$ we can use the Lorentzian-harmonic
 map equations to write
 $\langle N_y, N \times N_{xt} \rangle = \langle N_y, N \times N_{xx} \rangle$
 and so we have
 \[
 a_x = \langle N_y, N \times N_{xt}\rangle = \langle N_0^\prime - \mv, N_0 \times \mv^\prime \rangle.
 \]
 
To find $c$, since $\mv^\prime\perp \mv$, we have $N_0 \times \mv ^\prime$ is perpendicular to both
 $N_0 \times \mv$ and $N_0$, hence parallel to $\mv$, i.e.,
 $N_0 \times \mv ^\prime = -\varepsilon |\mv^\prime| \mv$.  Thus,
 \[
 c = \frac{a_x}{b} = \frac{ -\varepsilon |\mv^\prime| \langle N_0^\prime - \mv, \mv \rangle}{
   \left\langle \mv - N_0^\prime \, , \, \mv \right\rangle}
  = \varepsilon |\mv^\prime|.
  \]
  
This formula is well-defined, since we have assumed that 
$\langle \mv(t) - N_0^\prime(t) \, , \,  \mv (t) \rangle \not \equiv 0$.
Since $N_x \neq 0$ by assumption, the wave front condition is that $N_y$ does not vanish,
and this is equivalent to $(a,b) \neq (0,0)$.
Concerning regularity, coordinates have been chosen such that
$N_x \times N_y = a \Ad_F e_3$, so the singular set of $N$ is give by $\{a(x,y)=0\}$.
 We also have, along $(t,t)$, that
$\dd a = (bc \dd x + (a_t -bc) \dd y$, so the curve $\{a(x,y)=0\}$ is regular at $(t,t)$
if and only if $(bc, a_t-bc) \neq (0,0)$, and this is equivalent to $(bc, a_t) \neq (0,0)$.
\end{proof}

\begin{remark}
Real analyticity is not strictly needed in the above theorem: it's enough for the 
functions to be differentiable, provided the formula for $c$ is well-defined. Moreover, if we 
just take $|\mv(t)|=A(t)>0$ instead of constant, then we replace $\hat \alpha_0$ in the theorem 
with 
\beq  \label{potentialformula1}
\hat \alpha_0 = \left(c e_3 + A e_1 \lambda 
  + ( -b e_1 + a e_2) \lambda^{-1}\right) \dd t,
 \eeq
 where 
 \beq  \label{potentialformula2}
  c = \frac{\langle N_0^\prime - \mv \, ,\, N_0 \times \mv^\prime \rangle - \langle N_0^\prime \,,\, 
  N_0 \times \mv \rangle A^\prime / A}{\langle \mv- N_0^\prime, \mv \rangle},    \quad 
  a = \frac{\left \langle N_0^\prime\, , \, N_0 \times \mv \right \rangle }{A}, \quad
 b=  \frac{\left\langle \mv - N_0^\prime \, , \, \mv \right\rangle}{A}.
 \eeq
\end{remark}

\begin{figure}[ht]
\centering
$
\begin{array}{ccccc}
\includegraphics[height=26mm]{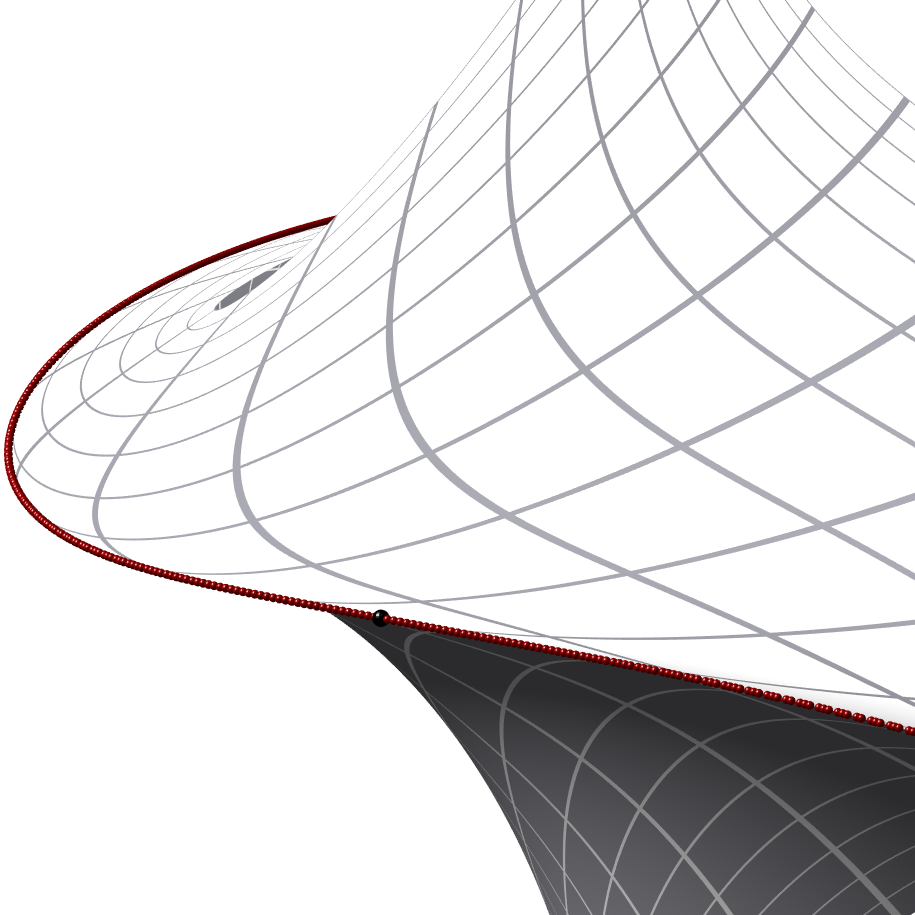}   &
\includegraphics[height=26mm]{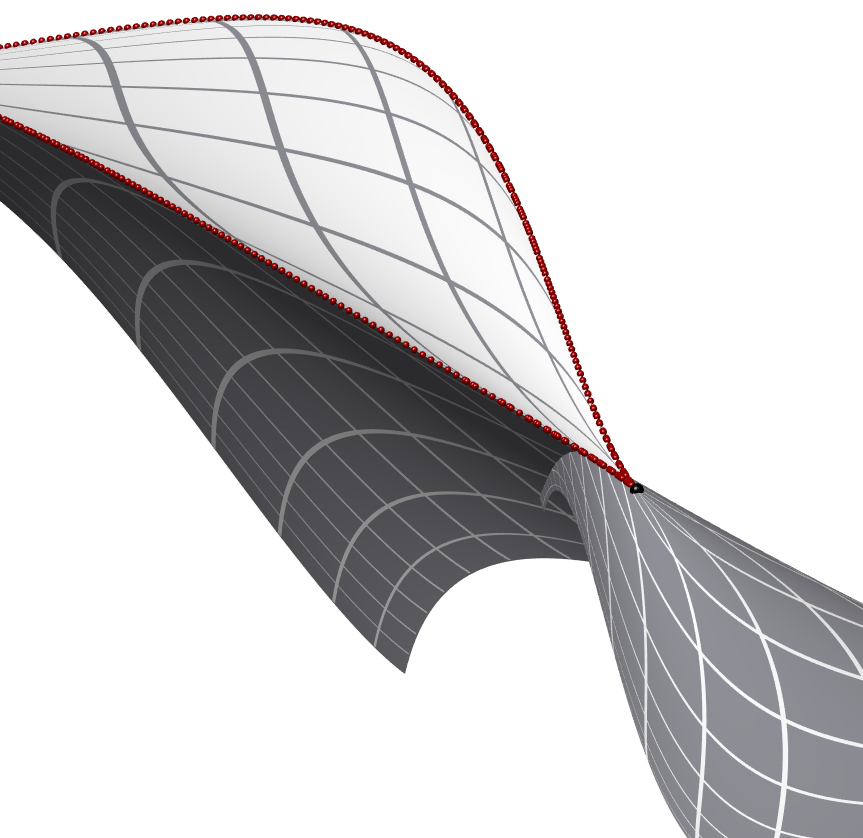}
 & 
\includegraphics[height=26mm]{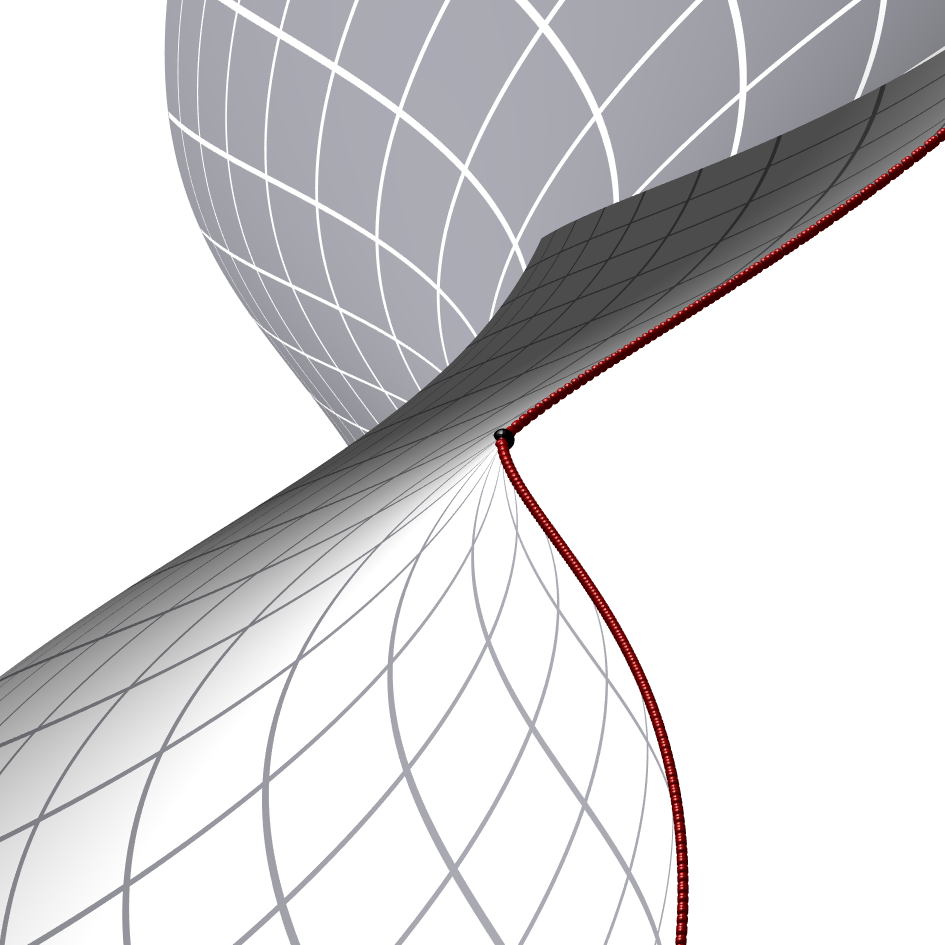} &
 \includegraphics[height=26mm]{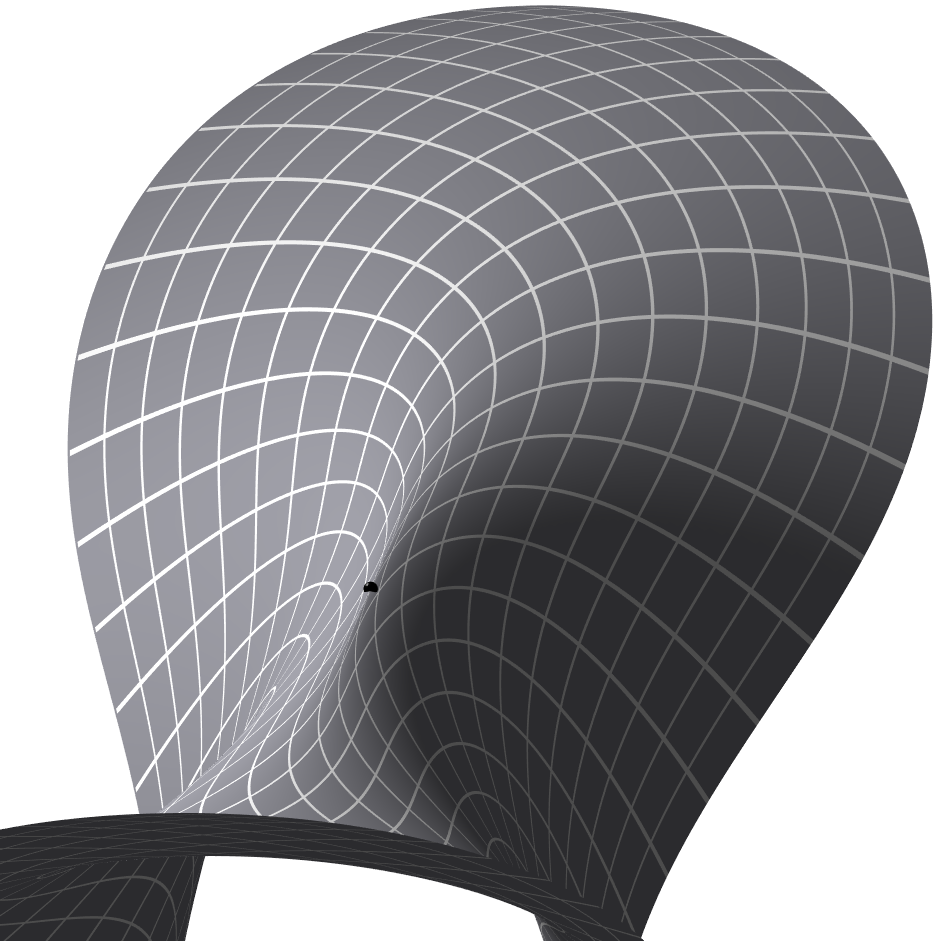}  &
\includegraphics[height=26mm]{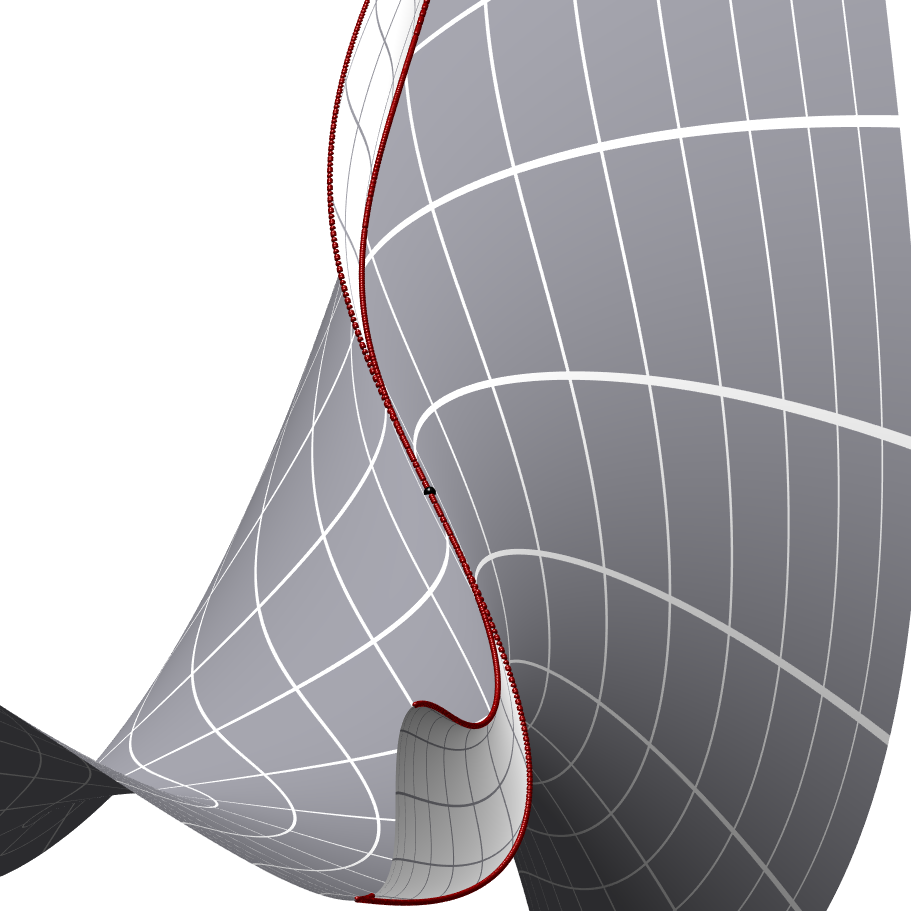} 
\vspace{2ex}\\
\includegraphics[height=22mm]{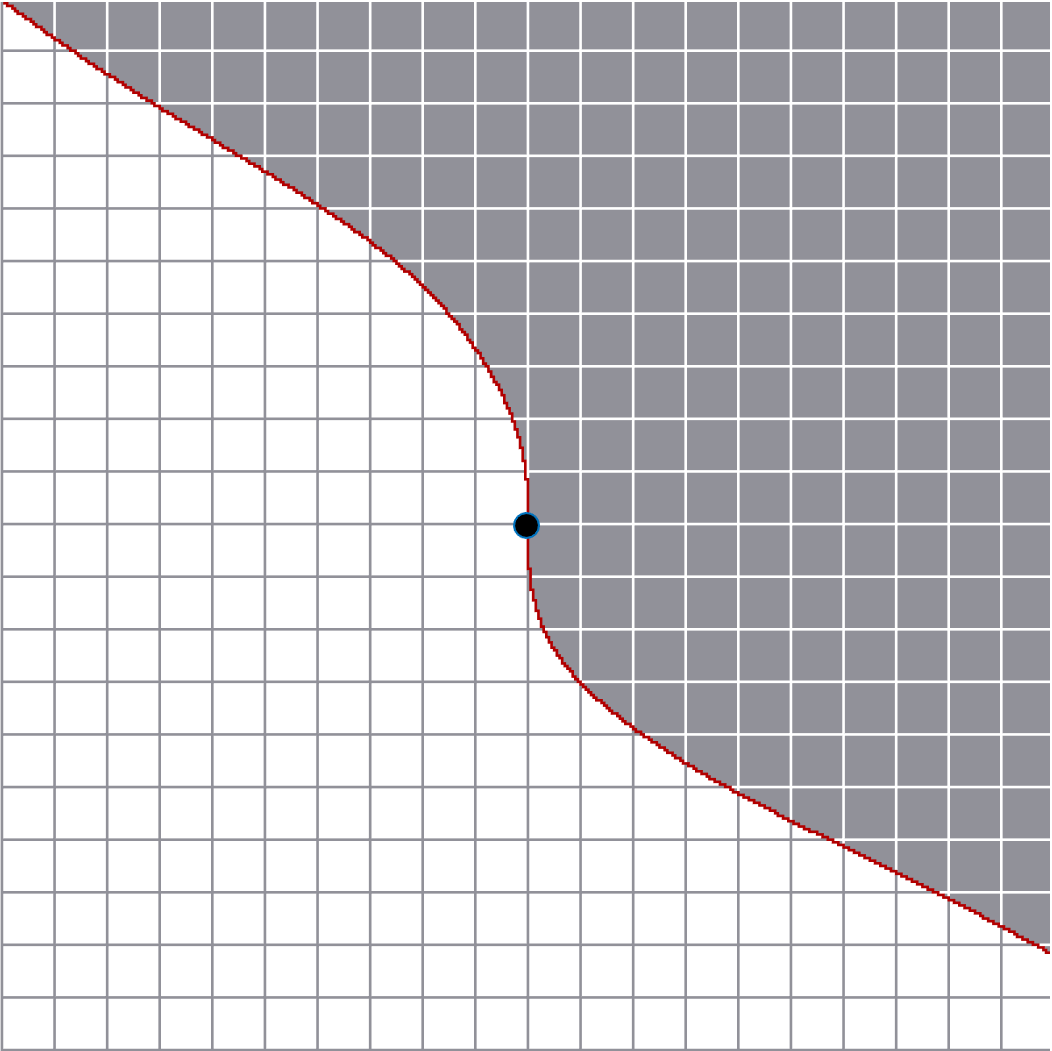}  &
\includegraphics[height=22mm]{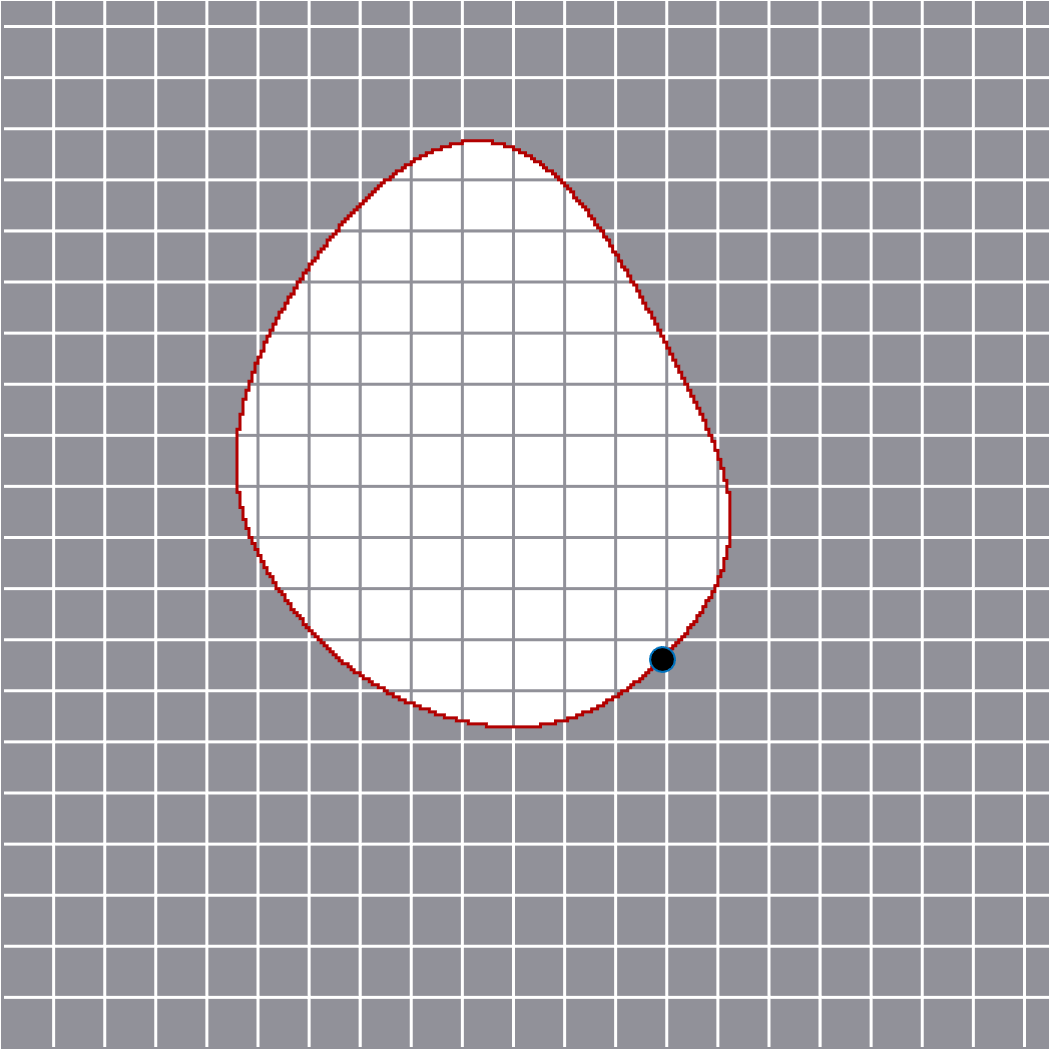} & 
 \includegraphics[height=22mm]{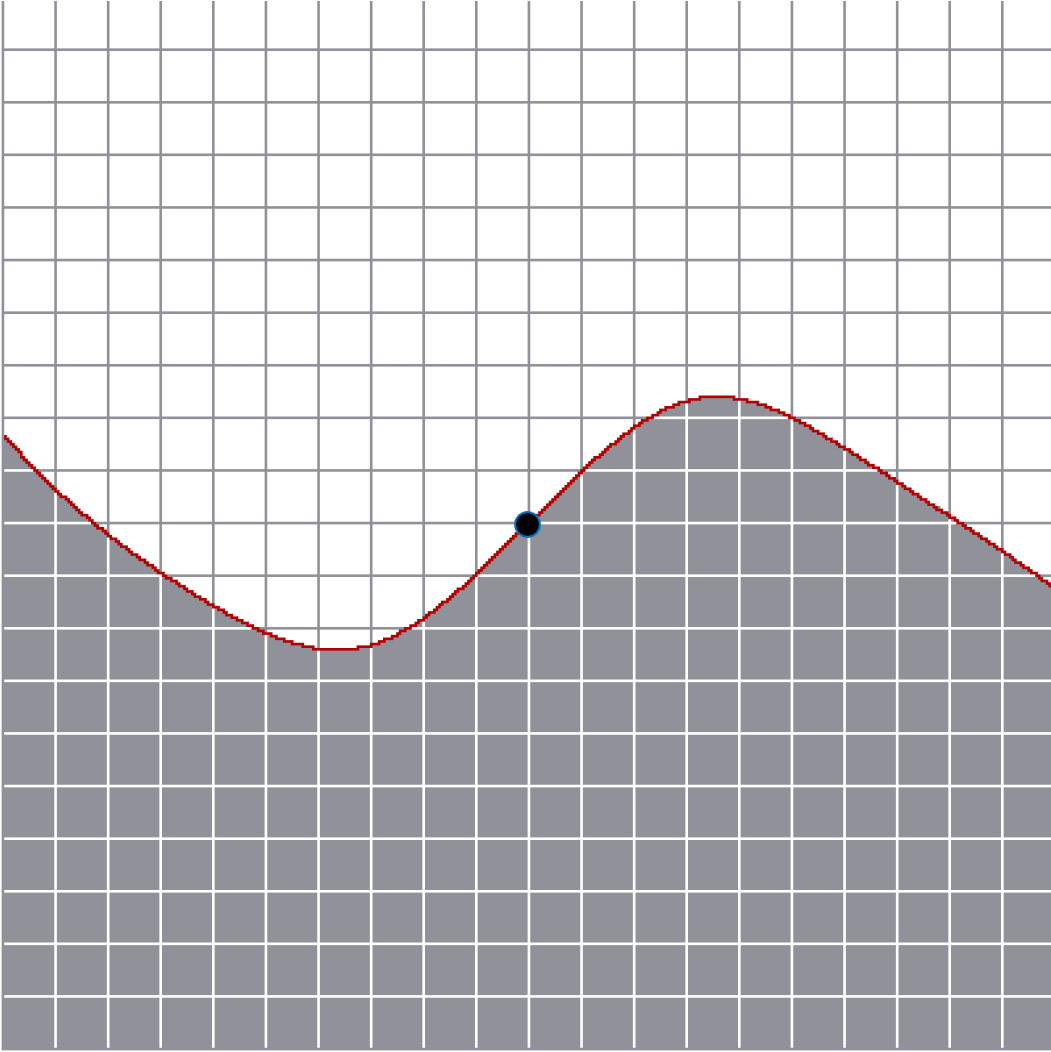}
  & 
  \includegraphics[height=22mm]{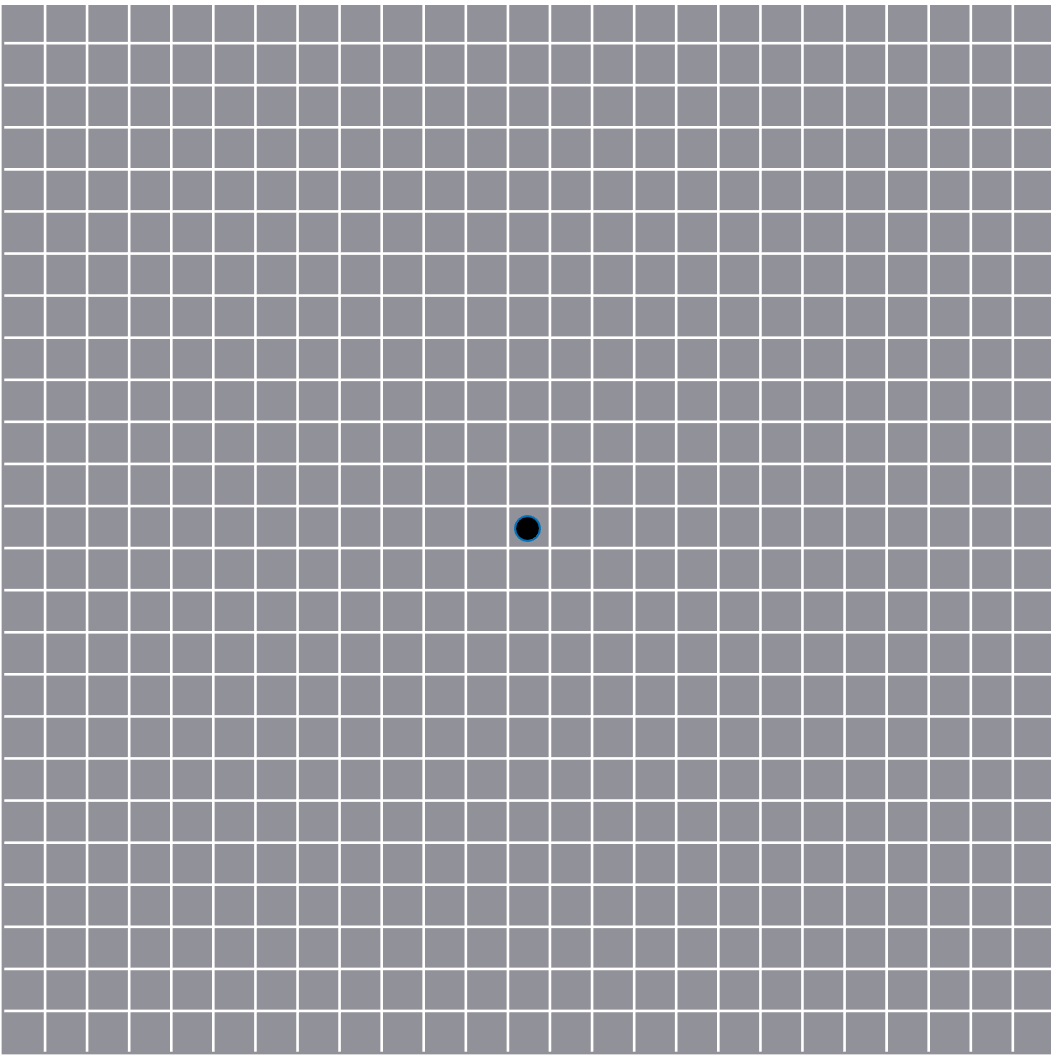}  &
\includegraphics[height=22mm]{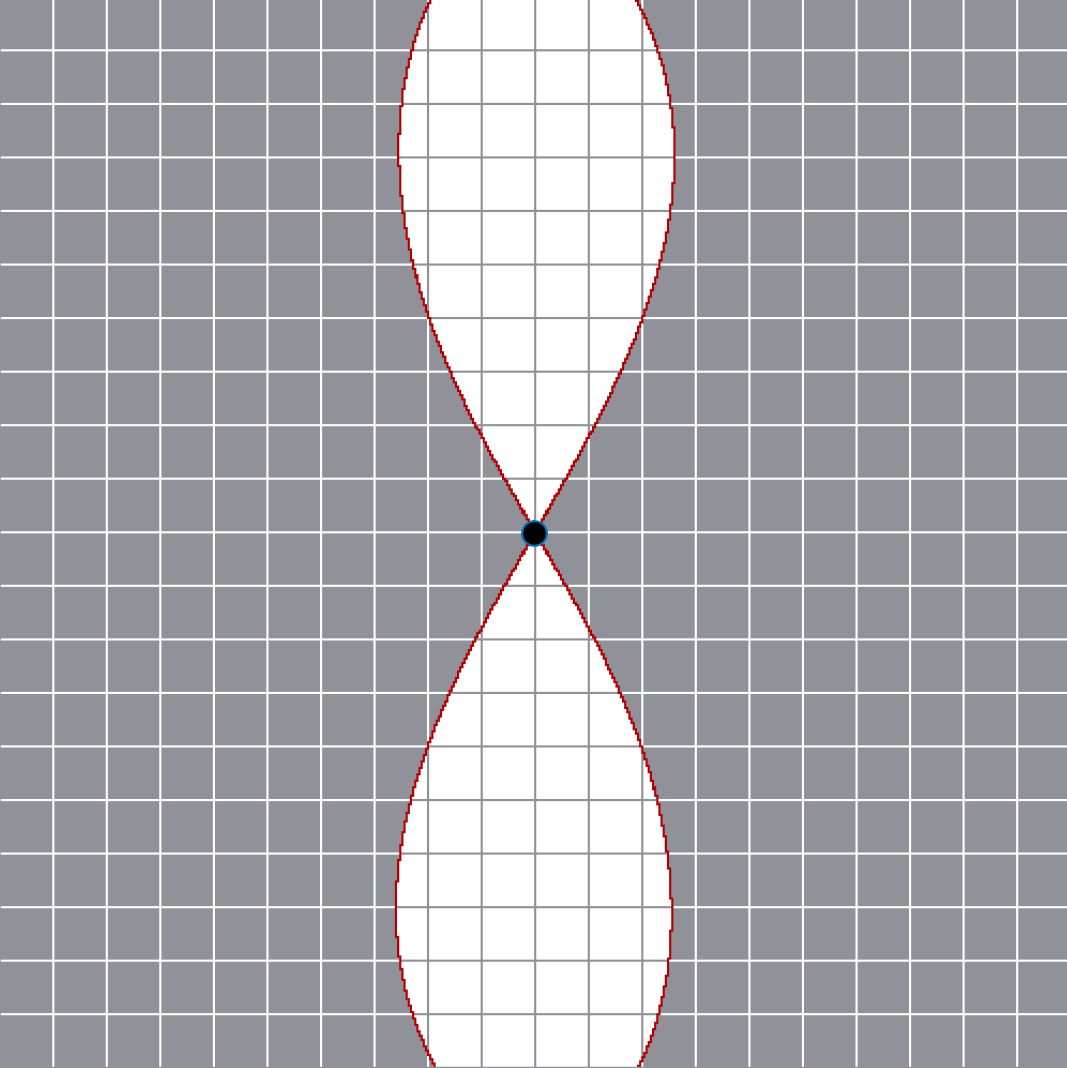}
\end{array}
$
\caption{Pseudospherical surfaces with prescribed singularities.
Top row: cuspidal edge,  swallowtail,  cuspidal butterfly, cuspidal lips, cuspidal beaks.
Bottom row: the corresponding points in the coordinate domains with the same color map (see Example \ref{examples1} for more details).
}
\label{figCSB}
\end{figure}

\begin{example} 
\label{examples1}
We can use a numerical implementation of the generalized d'Alembert method 
(Section \ref{gdarsection}) to compute solutions.
To compute examples of the five types of singularities in Theorem \ref{theo:bif_Wavefront_case},
we choose 
$$j^3({\bf a}_1 ; {\bf a}_2 ; {\bf b}_1;  {\bf b_2}) = (a_{10},a_{20},a_{30}; a_{11},a_{22},a_{33};
 b_{10},b_{20},b_{30}; b_{11},b_{22},b_{33})$$ 
 that satisfy the conditions of the theorem, substitute
 into $N(x,y)$, find $N_x$ then use $N(t,t), N_x(t,t)$ in \eqref{potentialformula2}. 
 We have computed the examples in Table \ref{extable},
and the results are displayed in Figure~\ref{figCSB}.
The $xy$-rectangle that was used to compute the solution is plotted 
beneath the corresponding solution, with exactly the same colormap.
The regions where the continuous normal $N$ is parallel to $f_x \times f_y$ and to $-f_x \times f_y$ are colored with opposite colormaps, so the faces change color when a cuspidal edge is crossed. The point $(x,y)=(0,0)$ and its image under $f$ (in this case the point where the singularity of interest occurs) are  indicated by black dots. The singular set is found numerically by finding the points on the mesh where the normal direction is ambiguous, and is also highlighted in red.

 \begin{table}[h!] 
 \begin{center}
 \begin{tabular}{ll}
 $({\bf a}_1 ; {\bf a}_2 ; {\bf b}_1;  {\bf b_2})$ & Singularity Type \\
 \hline
 $(1,1,0; 1,0,0; 1,0,0; 1,0,0)$ &  cuspidal edge \\
$(1,1,0; 1,2,0; 1,0,0; 1,1,0)$ &  swallowtail \\
$ (1,1,1; 1,1,0; 1,0,1; 1,0,0)$ & cuspidal butterfly \\
$(1,0,1; 1,0,0; 1,0,0; 1,0,1)$ &  cuspidal lips \\
 $(1,0,1; 3,0,0; 1,0,0; 3,0,-1)$ &  cuspidal beaks \vspace{1ex} \\
\end{tabular}
\caption{Example $3$-jets and the corresponding singularities.} \label{extable}
\end{center}
\end{table}
\end{example}

The converse to Theorem \ref{gctheorem} is also valid: any triple of functions $(a,b,c)$ on an interval $I$ will generate a Lorentzian-harmonic map $N$ and its corresponding pseudospherical surface $f$ with domain $I \times I$.  In the next subsections 
we will examine directly the conditions on $(a,b,c)$ to obtain
the various types of singularities.


\subsection{Prescribed singularities: the wave front case}
Consider now a point where $f$ has a wave front singularity at $p=(x_0,y_0)$.  Then, for the frame
\eqref{adaptedframe} considered above, we have $a(p)=0$ and $b(p)\neq 0$. After a change of
coordinates $(\tilde x, \tilde y) = (x, \int_{y_0}^y b(\zeta, \zeta) \dd \zeta)$ we can assume 
that $b(t,t)=-1$, which enables us to prove:
\begin{theorem}  \label{wavefronttheorem}
Any pseudospherical wave front can locally be
represented by a potential pair $(\hat \chi, \hat \psi)=(\hat \alpha_0(x), \hat \alpha_0(y))$,
where
\[
\hat \alpha_0(t)  = \left(c(t) e_3 +  e_1 \lambda 
  + (e_1 + a(t) e_2) \lambda^{-1}\right) \dd t.
  \]
The corresponding pseudospherical surface $f$ has a singularity at $p=(t_0,t_0)$
if and only if $a(t_0) = 0$. At such a point, the germ of $f$ at $p$ is a:
\begin{enumerate}
    \item  Cuspidal edge $(A_2)$ if and only if $a^\prime \neq 0$.
    
\item  Swallowtail $(A_3)$ if and only if $a^\prime = 0$,
$a^{\prime \prime} \neq 0$ and $c \neq 0$.

\item Cuspidal butterfly $(A_4)$ if and only if
$a^\prime(t_0)=a^{\prime \prime} = 0$,
$a^{\prime \prime \prime} \neq 0$ and $c \neq 0$.

\item Cuspidal lips $(A_3^-)$ if and only if 
$a^\prime=c = 0$, and $c^\prime(a^{\prime \prime}+c^\prime) <0$.

\item Cuspidal beaks $(A_3^-)$ if and only if
$a^\prime=c = 0$,  $a^{\prime \prime}\neq 0$,
and $c^\prime(a^{\prime \prime}+c^\prime) >0$.
\end{enumerate}
\end{theorem}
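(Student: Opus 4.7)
The plan is to apply the recognition criteria of Theorem \ref{theo:critiria} after expressing the defining function $\sigma$ of the singular set and the null direction $\eta$ in terms of the loop-group data $(a,b,c)$. First, I would justify the normalization $b(t,t)=-1$: at a wave-front singular point $(t_0,t_0)$, $a(t_0)=0$ together with $(a,b)\neq(0,0)$ force $b(t_0)\neq 0$, and a reparameterization of the initial curve combined with a rescaling of $|\mv|$ as in the remark following Theorem \ref{gctheorem} brings $b$ to $-1$ on the diagonal without changing $f$ up to isometry. Then the potential takes the form stated.

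Next I would identify $\sigma$ and $\eta$ using the adapted frame \eqref{adaptedframe}. A short calculation gives $f_x=\Ad_F(e_1)$ and $f_y=\Ad_F(b\,e_1-a\,e_2)$, so $f_x\times f_y=-a\,N$. Hence $\sigma=-a$ is a local defining function for the singular set, and the null direction along $\{a=0\}$ is $\eta=b\,\partial_x-\partial_y$.

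Then I would compute the $\eta$-derivatives of $\sigma$ at $(t_0,t_0)$ using the structure equations \eqref{integrability2} (namely $a_x=bc$, $b_x=-ac$, $c_y=a$) together with the chain-rule identities along the diagonal $t\mapsto(t,t)$, which give $a_y(t,t)=a'(t)+c(t)$, $c_x(t,t)=c'(t)-a(t)$, and $a''(t)=(a_{xx}+2a_{xy}+a_{yy})(t,t)$. A direct substitution yields
\[
(\eta\sigma)(t,t)=a'(t),\qquad (\eta^k\sigma)(t_0,t_0)=a^{(k)}(t_0)\ \text{whenever}\ a^{(j)}(t_0)=0\ \text{for}\ j<k,
\]
from which cases (1)--(3) follow by direct appeal to Theorem \ref{theo:critiria}(i)--(iii). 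The secondary regularity condition on $\sigma$ reduces to $\nabla a=(-c,a'+c)\neq 0$, which is automatic if $a'(t_0)\neq 0$ and reduces to $c(t_0)\neq 0$ once $a'(t_0)=0$.

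For (4) and (5), $\sigma$ has a critical point at $(t_0,t_0)$, which by the gradient formula forces $a'(t_0)=c(t_0)=0$. I would then compute $a_{xx}(t_0,t_0)=-c'(t_0)$, $a_{xy}(t_0,t_0)=0$, and $a_{yy}(t_0,t_0)=a''(t_0)+c'(t_0)$ from the same relations, so the Hessian of $\sigma=-a$ is diagonal with entries $c'(t_0)$ and $-(a''(t_0)+c'(t_0))$ and determinant $-c'(t_0)(a''(t_0)+c'(t_0))$. Positive determinant (Morse index $0$ or $2$) gives $c'(a''+c')<0$, i.e.\ cuspidal lips; negative determinant (Morse index $1$) gives $c'(a''+c')>0$, which combined with $\eta^2\sigma=a''(t_0)\neq 0$ gives cuspidal beaks by Theorem \ref{theo:critiria}(v).

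The main obstacle is the bookkeeping: the structure equations \eqref{integrability2} supply only $a_x$, $b_x$, and $c_y$, so all transverse derivatives such as $a_y$, $c_x$, and especially $a_{yy}$ must be recovered by combining these with differentiation of $a(t),b(t),c(t)$ along the initial curve. Once those substitutions are in hand, the passage from the data $(a,c)$ to the geometric criteria of Theorem \ref{theo:critiria} is a direct computation.
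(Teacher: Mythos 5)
Your proposal is correct and follows essentially the same route as the paper: normalize $b\equiv -1$ along the diagonal, identify $\sigma$ with $\pm a$ and the null field with $b\,\partial_x-\partial_y$ (which restricts to $-\partial_t$ on the diagonal), then apply Theorem \ref{theo:critiria} using the structure equations \eqref{integrability2} to obtain $\eta^k\sigma$ and the Hessian entries $-c'$, $0$, $a''+c'$, exactly as in the paper's proof. The only discrepancy is an immaterial sign: since $(\eta g)(t,t)=-\tfrac{d}{dt}g(t,t)$ for any $g$, one gets $(\eta^k\sigma)(t_0,t_0)=(-1)^{k+1}a^{(k)}(t_0)$ rather than $a^{(k)}(t_0)$, which changes none of the nonvanishing conditions.
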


\begin{proof}
We are in the situation of Theorem \ref{gctheorem}, with $b(t,t)=b(t)=-1$, so the the singular set is
given by $\{a=0\}$ and the singular set is locally regular 
if and only if $(c(t),a^\prime(t)) \neq (0,0)$. We now use the criteria of
Theorem \ref{theo:critiria} for the various singularities. In the null coordinates
$(x,y)=(t+s, t-s)$, we have, at the point $t_0$ where $a=0$ and $b=-1$, the 
expression $\dd f = \Ad_F(e_1)(\dd x -\dd y) = \Ad_F(e_1) \dd s$. Hence, the null direction 
$\hbox{Ker} \, (\dd f)$ is
given at $p$ by:
\[
\eta = \frac{\partial}{\partial x} + \frac{\partial}{\partial y} = \frac{\partial}{\partial t}.
\]
Thus the conditions (i)-(iii) in Theorem \ref{theo:critiria} are equivalent to items (1)-(3) of this theorem. \\
To prove items (4) and (5), note that the function $\sigma=\det(f_x, x_y, N)$ in Theorem  \ref{theo:critiria} is here given by $\sigma(x,y)=a(x,y)$, where we write $a(t,t)=a(t)$.
To find the condition for $a(x,y)$ to have a Morse singularity, we have, as before, 
from \eqref{integrability2}:
\[
a_x=bc, \quad b_x = -ac, \quad a_y = a_t -a_x = a_t -bc,
\]
so $\dd a = bc \dd x +(a_t-bc) \dd y$, and, along $(t,t)$, we have 
$\dd a = c \dd x + (a^\prime +c) \dd y$, i.e., $a(x,y)$ has rank zero at $p$ if and only if
\[
a^\prime(t_0)=c(t_0)=0.
\]
We also have from \eqref{integrability2} that $c_y=a=0$ at $t_0$,
so $c_x(t_0)=c^\prime(t_0)$, and we find that 
\[
a_{xx}=-c^\prime, \quad a_{xy} =0, \quad a_{yy}=a^{\prime \prime} + c^\prime,
\]
at $p$. This gives the determinant of the Hessian matrix of $a$ at $p$
\[
a_{xx}a_{yy}-a_{xy}^2= -c^\prime(a^{\prime \prime}+c^\prime),
\]
and so the conditions (iv) and (v) of Theorem \ref{theo:critiria}
are equivalent to items (4) and (5).
\end{proof}

\begin{figure}[ht]
\centering
$
\begin{array}{cccc}
\includegraphics[height=32mm]{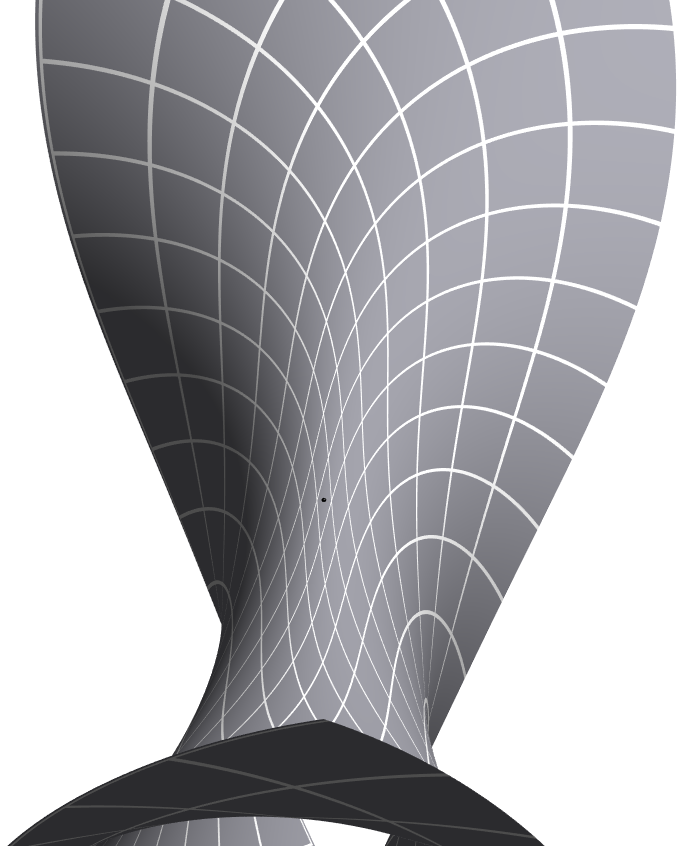}  \quad &  
\includegraphics[height=32mm]{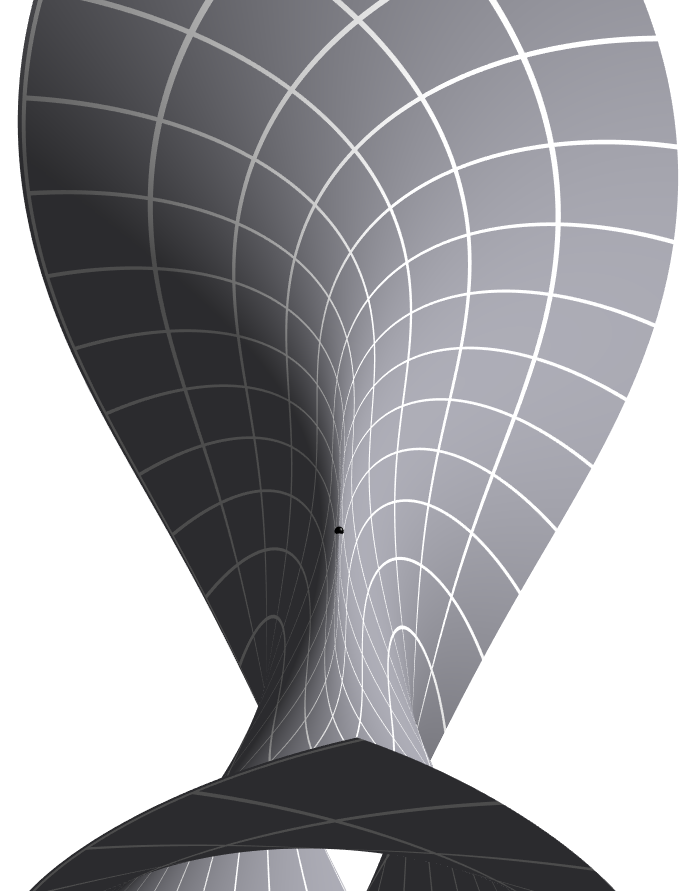} \quad
  & 
\includegraphics[height=32mm]{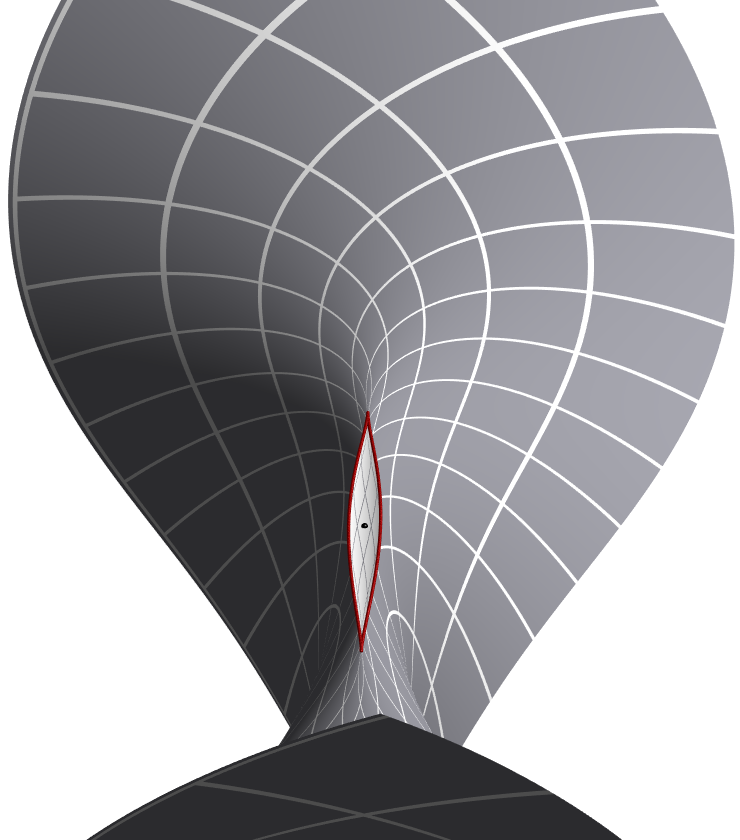}
\quad   & 
\includegraphics[height=32mm]{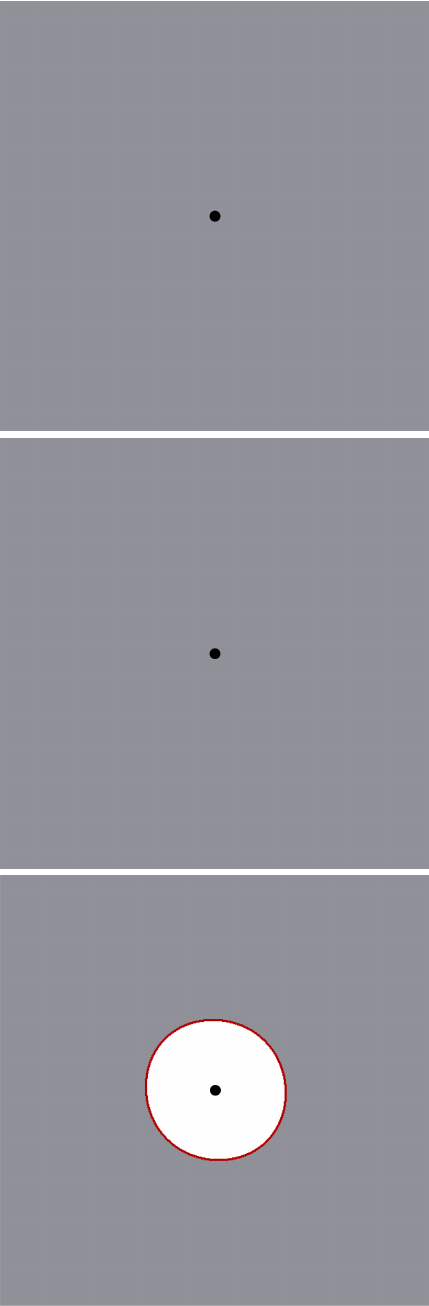} \vspace{1ex}\\
\includegraphics[height=32mm]{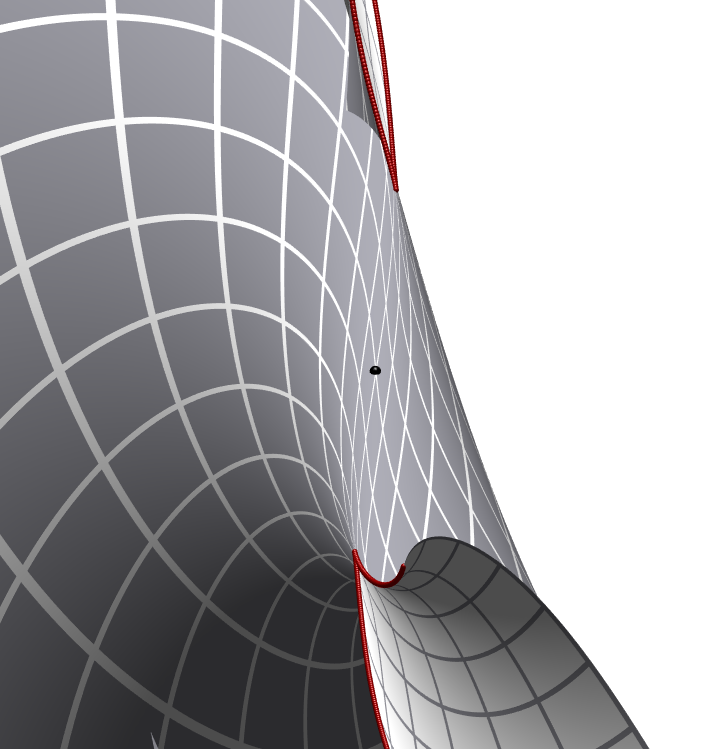} \,   & \,
\includegraphics[height=32mm]{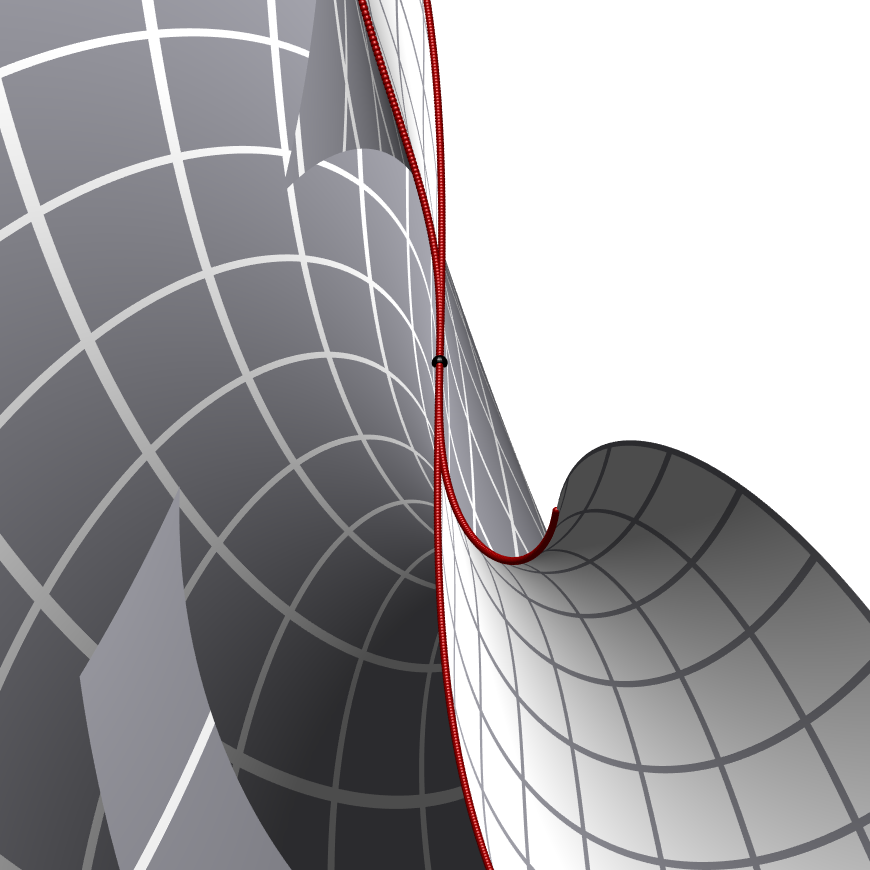}
\, & \,
\includegraphics[height=32mm]{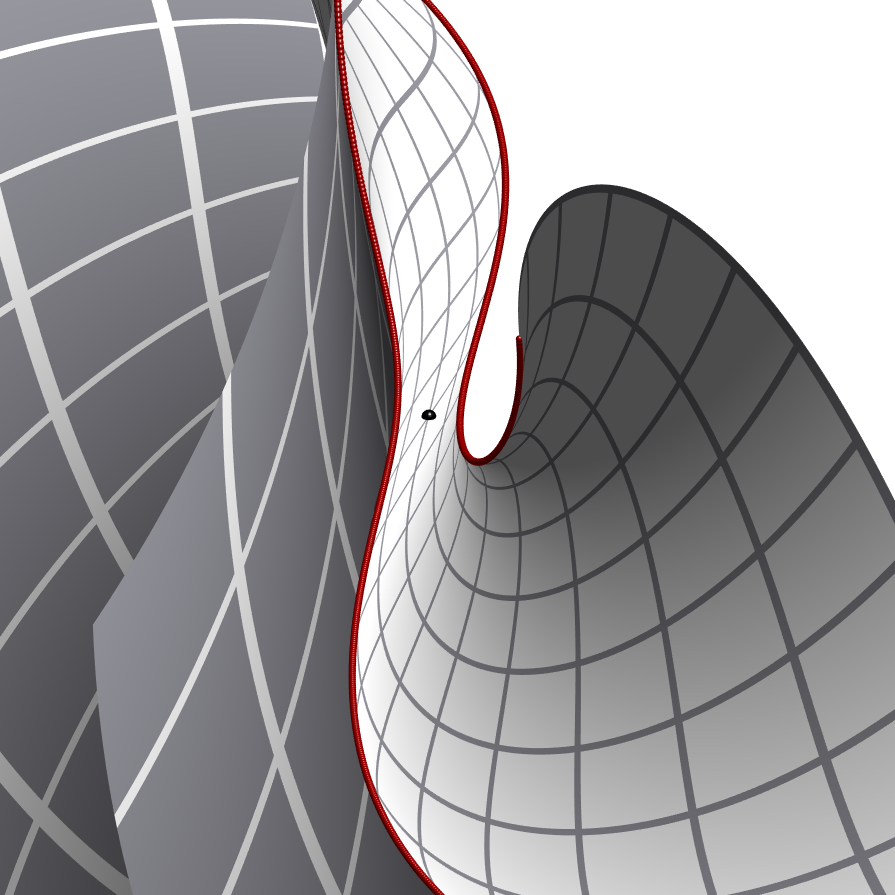}
\, & \,
\includegraphics[height=32mm]{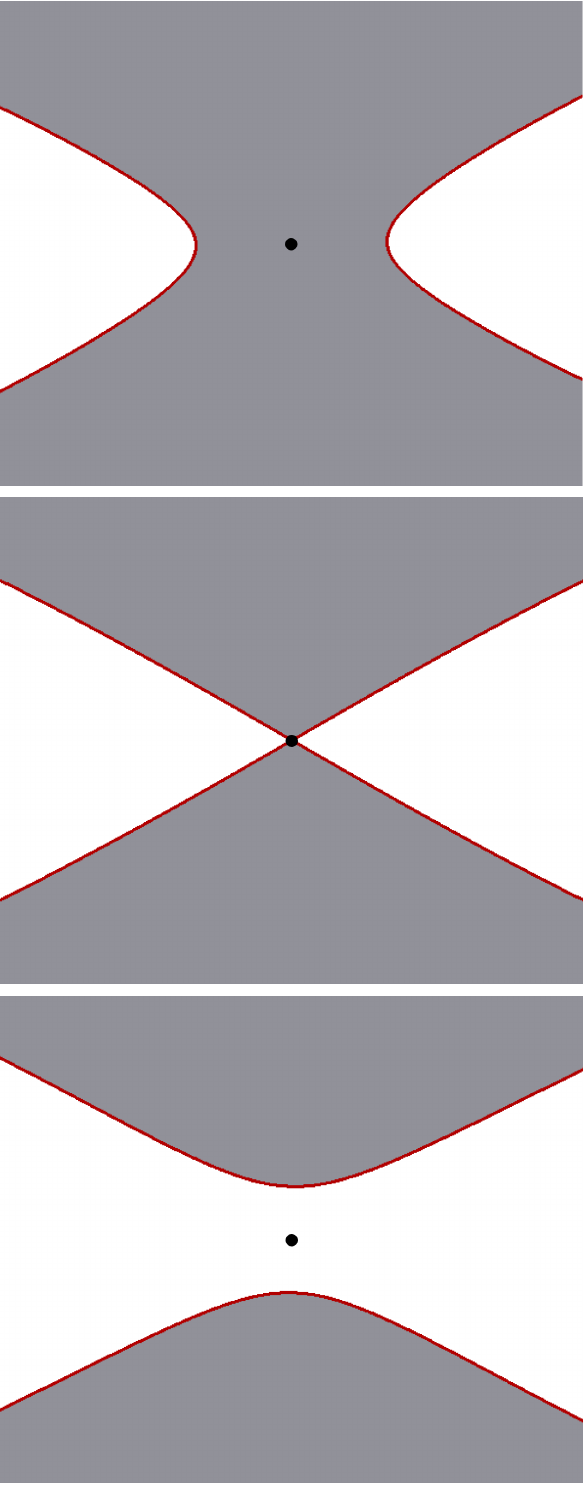} \vspace{1ex}\\
\includegraphics[height=32mm]{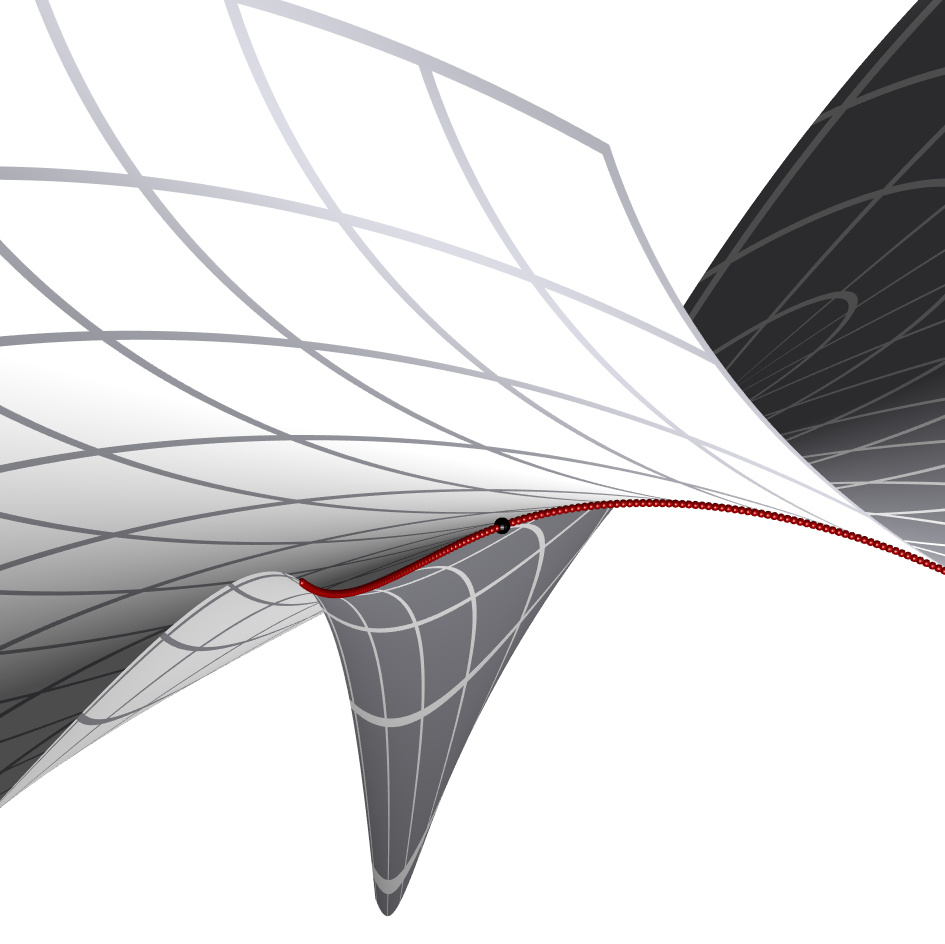}  \, & \, 
\includegraphics[height=32mm]{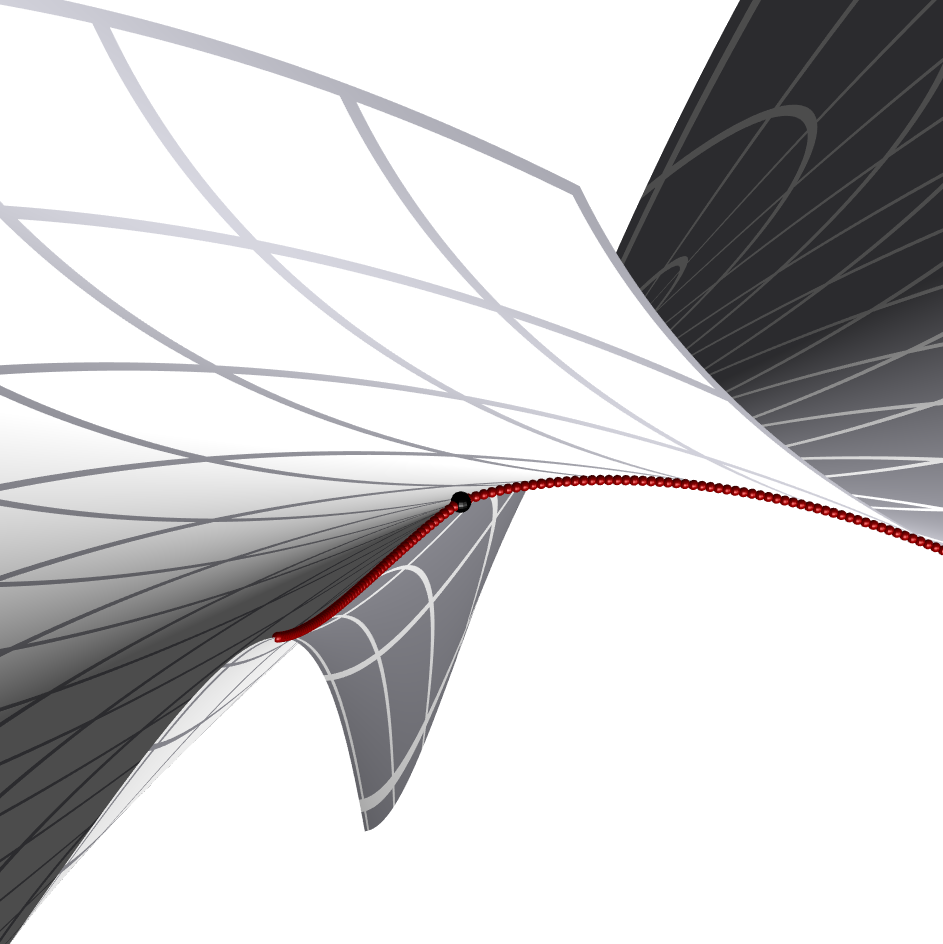} 
 \, & \,
\includegraphics[height=32mm]{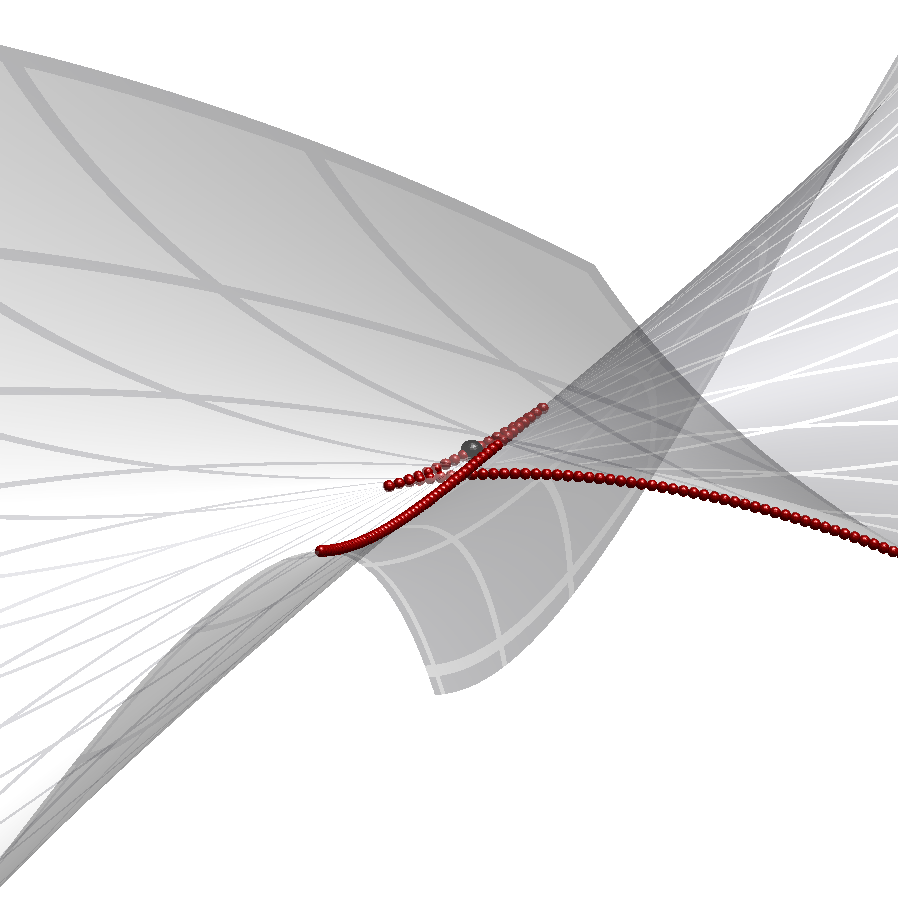}
  \, & \,
\includegraphics[height=32mm]{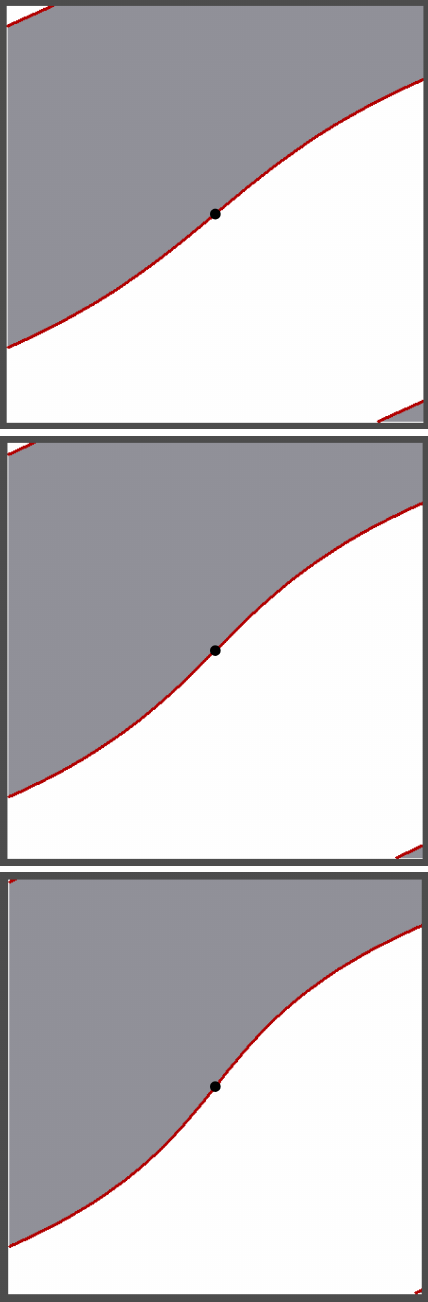} 

\end{array}
$
\caption{Cuspidal lips (top), cuspidal beaks (middle) and  Cuspidal butterfly 
 (bottom) bifurcations. (Example \ref{lipsexample}). 
For each row, the corresponding coordinate patches are shown at the far right (top to bottom). Coloring as in Example \ref{examples1}. The black dot marks the point $(x,y)=(0,0)$ and its image under $f$.}
\label{figbutterflybif}
\end{figure}

\begin{example} \label{lipsexample}
The cuspidal lips, beaks and butterfly bifurcations can be realized in generic 1-parameter families of surfaces produced as follows.
The cuspidal lips occurs in the family $f^r$ of surfaces generated by Theorem \ref{wavefronttheorem} with 
data $(a^r(t),c^r(t))=(t^2+r, -t)$.  The surfaces for $r=0.1$, 
$r=0$ and $r=-0.06$ are computed and shown in Figure \ref{figbutterflybif}. 
The cuspidal beaks bifurcation in the same figure is produced using $(a^r(t),c^r(t))=(t^2+r, t)$ instead.  The cuspidal butterfly bifurcation in the figure was produced by the 
family $(a^r(t),c^r(t)=(t^3+rt, 1)$, computed at $r=0.2$, $r=0$, and $r=-0.2$.
\end{example}

\subsection{Prescribed non-wave front singularities}
All rank 1 non-wave front singularities are locally produced as follows:

\begin{theorem} \label{nonwavefronttheorem}
Let $(f,N)$ be the maps in Theorem \ref{gctheorem}, generated by the functions $a$, $b$ and $c$ from $I \to \real$. Then $f$ is not a wave front at $p=(t_0,t_0)$ if and only if \[
a(t_0)=b(t_0)=0,
\]
At such a point, the singular set  in 
$I \times I$ is locally regular if and only if $a^\prime(t_0) \neq 0$. Moreover, the singularity at $p$ is of type:

\begin{enumerate}
    \item $2/5$-cuspidal edge if and only if $a^\prime \neq 0$ and
    $a^\prime b^{\prime \prime}-b^\prime a^{\prime \prime} + 2 c ((a^{\prime})^2 + (b^{\prime})^2) \neq 0$ at $t_0$.
    \item The singular set has a Morse $A_1^{-}$-singularity at $p$  if and only if
    $a^\prime =0$ and $b^\prime c \neq 0$ at $t_0$.
    In this case, the singularity is a Shcherbak singularity if and only if, additionally, $a''-2b'c \neq 0$.
\end{enumerate}

\end{theorem}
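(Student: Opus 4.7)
The strategy is to apply the recognition criteria of Theorems \ref{theo:2/5cuspidaledge} and \ref{theo:2/5CuspidalBeaks}, translating each geometric condition into one on $a, b, c$ via the adapted frame of Theorem \ref{gctheorem} and the integrability relations \eqref{integrability2}. In that frame $N_x = -\Ad_F e_2$ is nowhere zero and $N_y = \Ad_F(ae_1 + be_2)$, so by Proposition \ref{prop:f_frontal_wavefront} the failure of $f$ to be a wave front at $p=(t_0,t_0)$ is equivalent to $a(t_0)=b(t_0)=0$. Under this hypothesis Theorem \ref{gctheorem}(3)'s criterion $(bc,a')(t_0)\ne(0,0)$ for local regularity of $\Sigma$ collapses to $a'(t_0)\ne 0$.

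For item (1), Theorem \ref{theo:2/5cuspidaledge} identifies $f$ as a $2/5$-cuspidal edge at $p$ exactly when $[N,N_{yy},N_x](p)\ne 0$ and $[N,N_{yy},N_{yyy}](p)\ne 0$. Since $F^{-1}F_y = -be_1+ae_2$ vanishes at $p$, direct differentiation gives $N_{yy}(p)=\Ad_F(a_y\,e_1 + b_y\,e_2)$, and because the $F^{-1}F_y$-bracket still contributes nothing at $p$, also $N_{yyy}(p)=\Ad_F(a_{yy}\,e_1 + b_{yy}\,e_2)$. From \eqref{integrability2} one gets $a_x(p)=b_x(p)=0$, hence $a_y(p)=a'(t_0)$ and $b_y(p)=b'(t_0)$; differentiating $a_x=bc$, $b_x=-ac$ and applying the diagonal chain rule $\bar u''(t)=u_{xx}+2u_{xy}+u_{yy}$ yields
\[
a_{yy}(p) = a''(t_0)-2b'(t_0)c(t_0),\qquad b_{yy}(p) = b''(t_0)+2a'(t_0)c(t_0).
\]
Evaluating the triple products in the orthonormal frame $\{\Ad_F e_i\}$ then gives $[N,N_{yy},N_x](p) = -a'(t_0)$ (matching $\Sigma$-regularity) and $[N,N_{yy},N_{yyy}](p) = a'b''-b'a''+2c\bigl((a')^2+(b')^2\bigr)$ at $t_0$, which is exactly the stated condition.

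For item (2), a short calculation with $f_x=N\times N_x$ and $f_y=-N\times N_y$ shows $\sigma:=\det(f_x,f_y,N)=a$, so $\Sigma=\{a=0\}$. Since $a_x(p)=0$, we have $d\sigma(p)=a'(t_0)\,dy$; thus $\sigma$ has a critical point at $p$ iff $a'(t_0)=0$. Using the derivatives above, the Hessian at $p$ equals
\[
\begin{pmatrix} 0 & b'c \\ b'c & a''-2b'c \end{pmatrix}\ \text{at } t_0,
\]
with determinant $-\bigl(b'(t_0)c(t_0)\bigr)^2\le 0$; hence the critical point is Morse iff $b'(t_0)c(t_0)\ne 0$, and is then automatically of index one, i.e., $A_1^-$. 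To apply Theorem \ref{theo:2/5CuspidalBeaks}, factor the Hessian quadratic form as $y\bigl(2b'c\,x + (a''-2b'c)\,y\bigr)$; the two tangent branches of $\Sigma$ at $p$ are $(1,0)=\partial_x$ (automatically a null direction) and $(a''-2b'c,-2b'c)$. The latter is transverse to the other null direction $\partial_y$ iff its $x$-component $a''(t_0)-2b'(t_0)c(t_0)$ is nonzero, which is precisely the Shcherbak hypothesis.

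The main obstacle is the bookkeeping required to separate the $t$-derivatives of $a,b,c$ along the diagonal from partial derivatives at $p$; this is cleanly handled by repeated use of the three integrability equations, which reduce all the mixed partials $a_{xx},a_{xy},b_{xx},b_{xy}$ at $p$ to polynomial expressions in $a,b,c$ and their $t$-derivatives.
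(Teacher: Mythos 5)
Your proposal is correct and follows essentially the same route as the paper: reduce the wave-front and regularity conditions via Theorem \ref{gctheorem}, compute $N_{yy}$ and $N_{yyy}$ at $p$ in the adapted frame to get the $2/5$-cuspidal-edge condition $a_yb_{yy}-b_ya_{yy}\neq 0$, and read off the Morse/Shcherbak conditions from the Hessian of $a$ at $p$. Your explicit evaluations $a_{yy}(p)=a''-2b'c$ and $b_{yy}(p)=b''+2a'c$ and the branch-transversality argument match the paper's computation \eqref{hessianmatrix} and its appeal to Theorem \ref{theo:2/5CuspidalBeaks}.
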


\begin{proof}
The non-wave front conditions and the condition for the singular set to be regular are already explained in Theorem \ref{gctheorem}. For item (1), the conditions for a $2/5$-cuspidal edge, from Theorem \ref{theo:2/5cuspidaledge}, are that the singular set
is regular (here $a^\prime(t_0) \neq 0$), and $[N(t_0),N_{yy}(t_0),N_{yyy}(t_0)] \neq 0$.
Differentiating $N_y = \Ad_F(ae_1+be_2)$ with respect to $y$, and using that
$F^{-1}F_y = (-be_1+ae_2)$ and $a_x=bc$, $b_x=ac$ and $c_y=a$, we find  at $t_0$ that
$N_{yy} = \Ad_F(a_y e_1 + b_y e_2)$ and $N_{yyy}=\Ad_F(a_{yy} e_1 + b_{yy}e_2)$, and hence $N_{yy} \times N_{yyy}$ has a component parallel to $N$ if and only if $a_y b_{yy}-b_ya_{yy} \neq 0$. In the coordinates $(x,y)=(t+s,t-s)$ this translates to the second condition given in item (1).\\

For item (2), note that the singular set is given by $a(x,y)=0$, so the condition is on the Hessian of $a$ at $p$. Given $a(t_0)=a^\prime(t_0)=b(t_0)=0$, we compute that, at $t_0$, the Hessian matrix is:
\beq \label{hessianmatrix}
\textup{Hess}(a)= \left( \begin{array}{cc} a_{xx} & a_{xy} \\ a_{xy} & a_{yy} \end{array} \right) =
\left( \begin{array}{cc} 0 & b^\prime c \\ b^\prime c & a^{\prime \prime}- 2 b^\prime c \end{array} \right),
\eeq
and the determinant is negative if and only if $b^\prime(t_0) c(t_0) \neq 0$. Finally, the two branches of the set 
$a(x,y)=0$ are tangent respectively to the $x$ and $y$-axes (i.e. both are tangent to null curves) at $p$ if and only if 
the last component $a''-2b'c$ of the Hessian matrix is zero.
Hence, the conditions in Theorem \ref{theo:2/5CuspidalBeaks}
for the Shcherbak singularity are that this term does not vanish.
\end{proof}

\begin{figure}[ht]
\centering
$
\begin{array}{ccc}
\includegraphics[height=25mm]{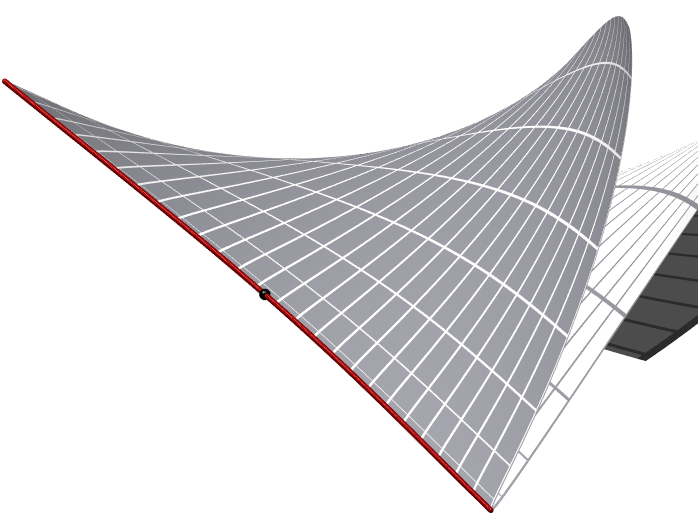}  
 \quad & \quad
\includegraphics[height=25mm]{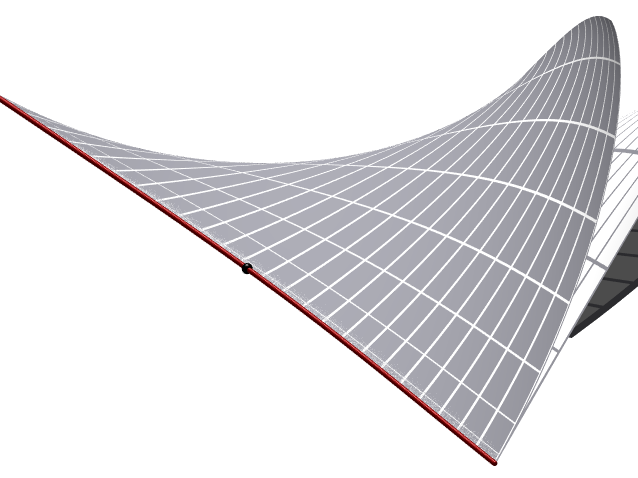}
\quad & \quad  
\includegraphics[height=25mm]{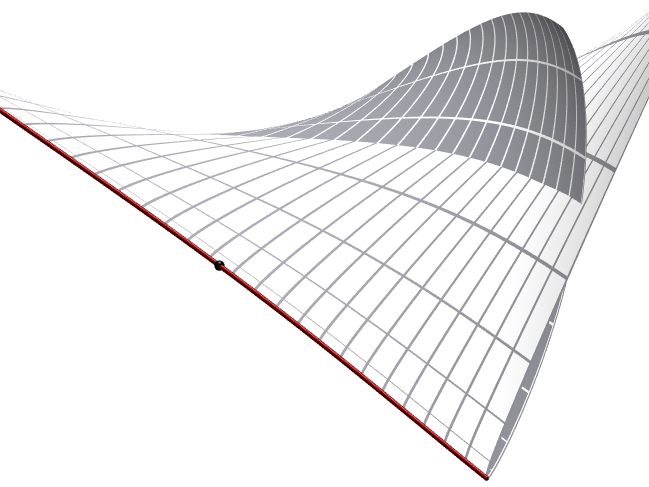}   
 \vspace{1ex}\\
\includegraphics[height=16mm]{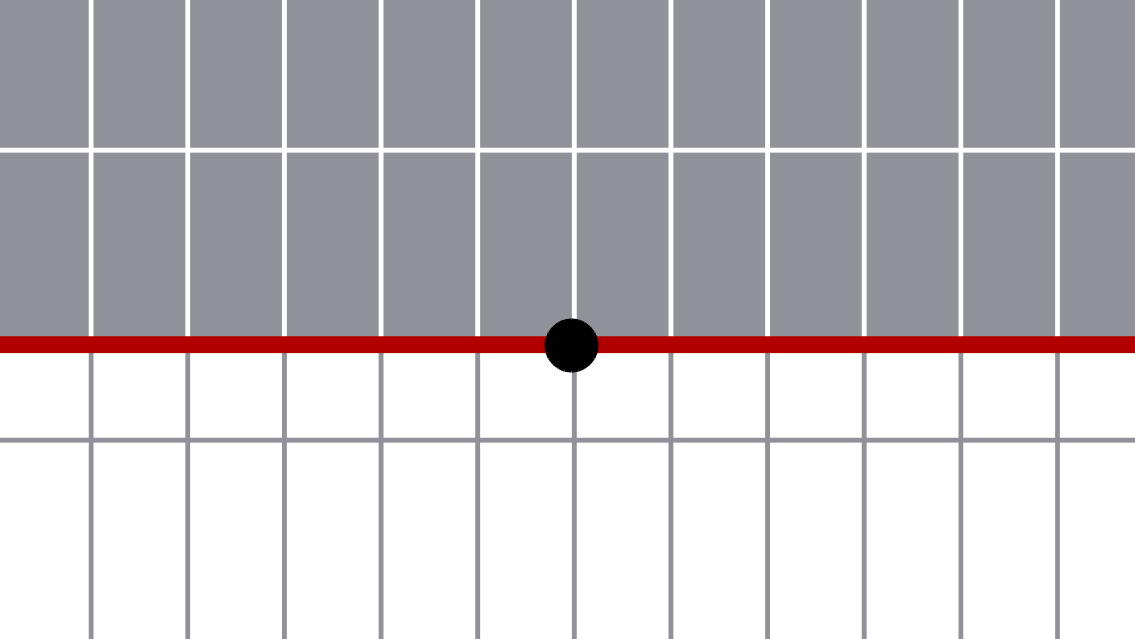} &
\includegraphics[height=16mm]{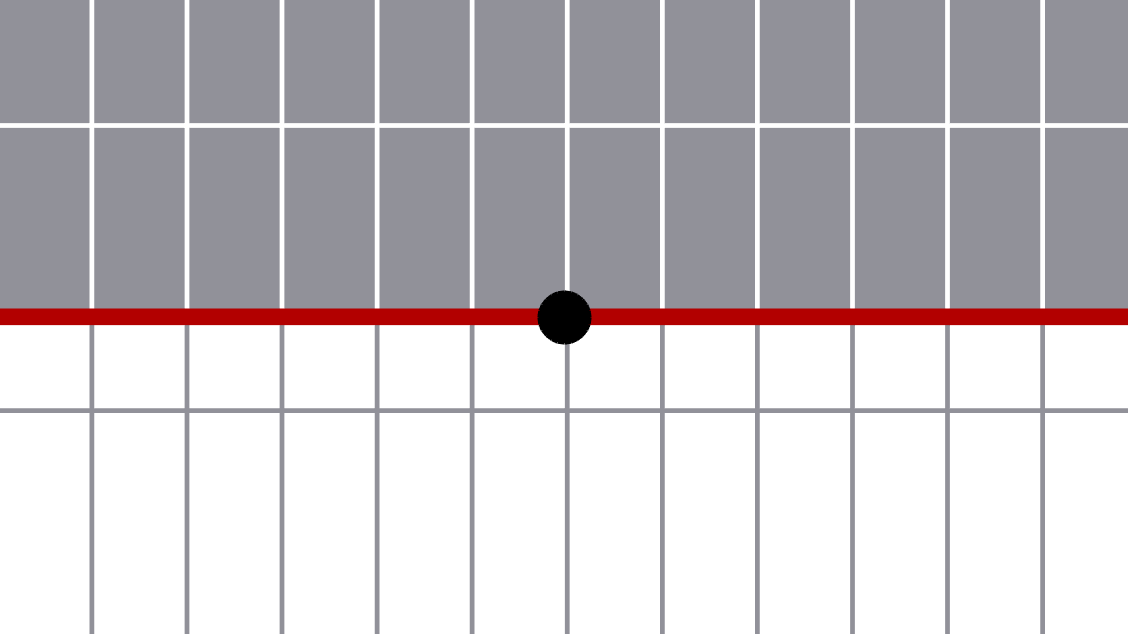}  &
\includegraphics[height=16mm]{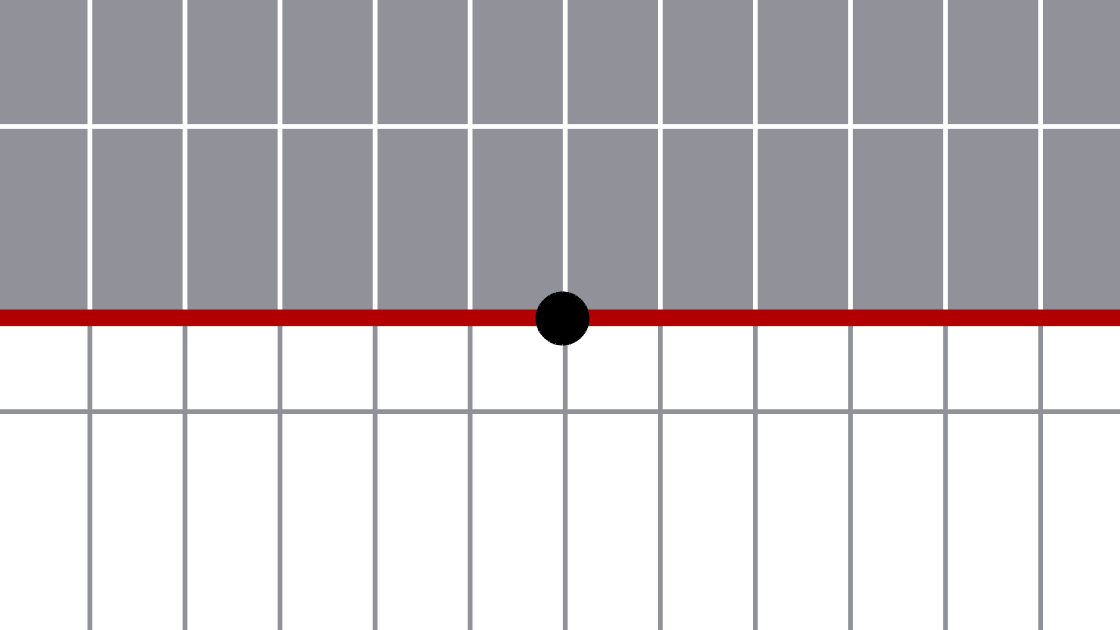} 
\end{array}
$
\caption{$2/5$-cuspidal edge bifurcation from Example \ref{shcherbakexample} (see also Figure \ref{fig:bif25CuspEdge}).}
\label{25CEbif}
\end{figure}

\begin{example} \label{shcherbakexample}
The $2/5$-cuspidal edge bifurcation can be realized by
the 1-parameter family of  surfaces generated by  
Theorem \ref{gctheorem} with 
\[
(a^\zeta(t),b^\zeta(t),c^\zeta(t)) = (t, t+\zeta, 0.1).
\]
The Shcherbak bifurcation can be realized by 
\[
(a^{\zeta}(t),b^{\zeta}(t),c^{\zeta}(t)) = (t^2+\zeta, t, -1).
\]
We have computed images for both bifurcations and displayed them
in Figures \ref{25CEbif} and \ref{figshcherbakbif}. We used the $\zeta$ values
  $0.035$, $0$ and $-0.035$  for the $2/5$-cuspidal edge, and
the values $\zeta=-0.014$, $\zeta=0$ and $\zeta=0.015$ for the Shcherbak bifurcation.
\end{example}

\begin{figure}[ht]
  \begin{tabular}{cc}
      \begin{subfigure}{0.37\columnwidth}
      \hfill
      \includegraphics[width=\textwidth]{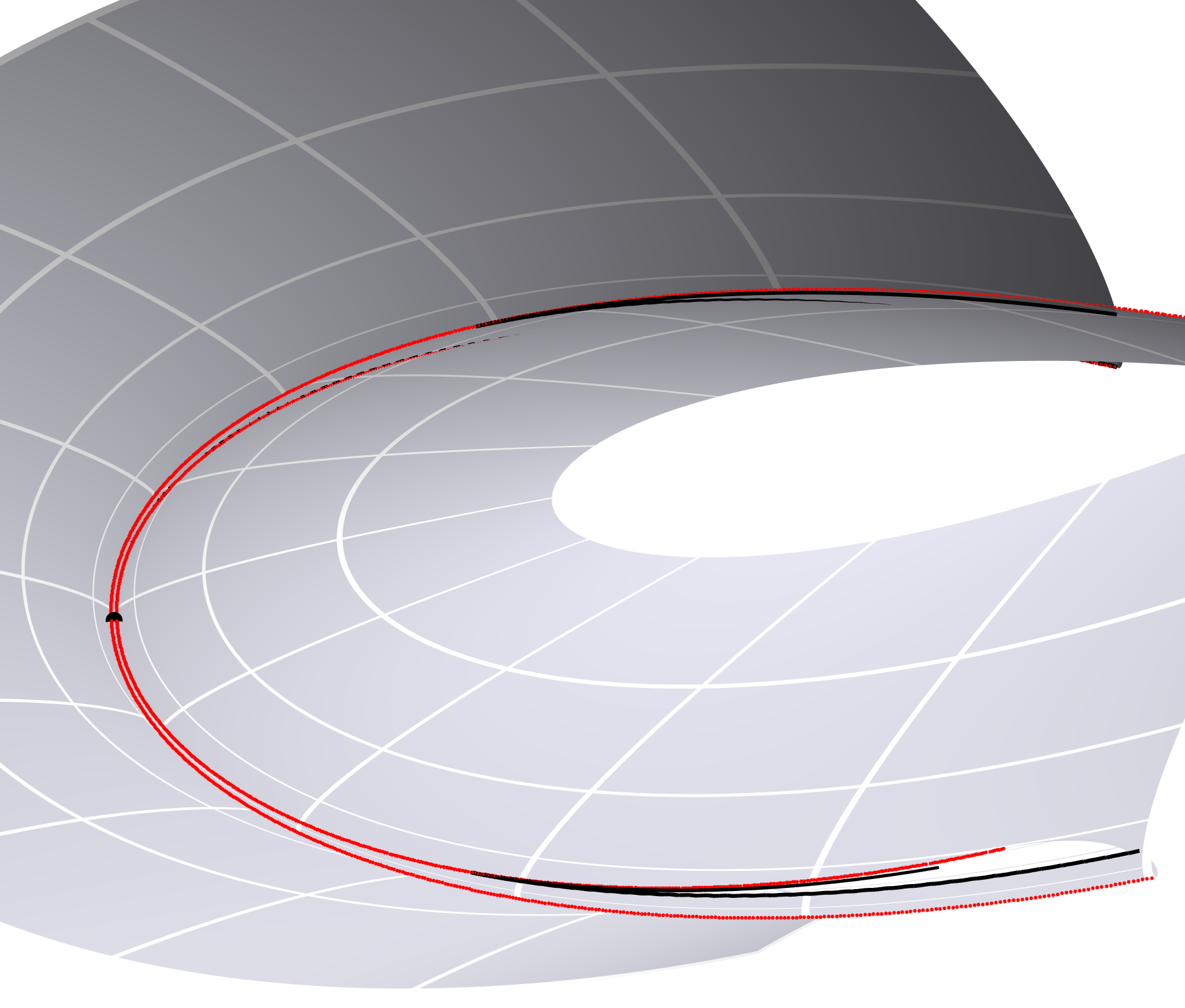}
    \end{subfigure}
    &
    \begin{tabular}{c}
      \begin{subfigure}{0.56\columnwidth}
       \hfil
        \includegraphics[width=0.475\textwidth]{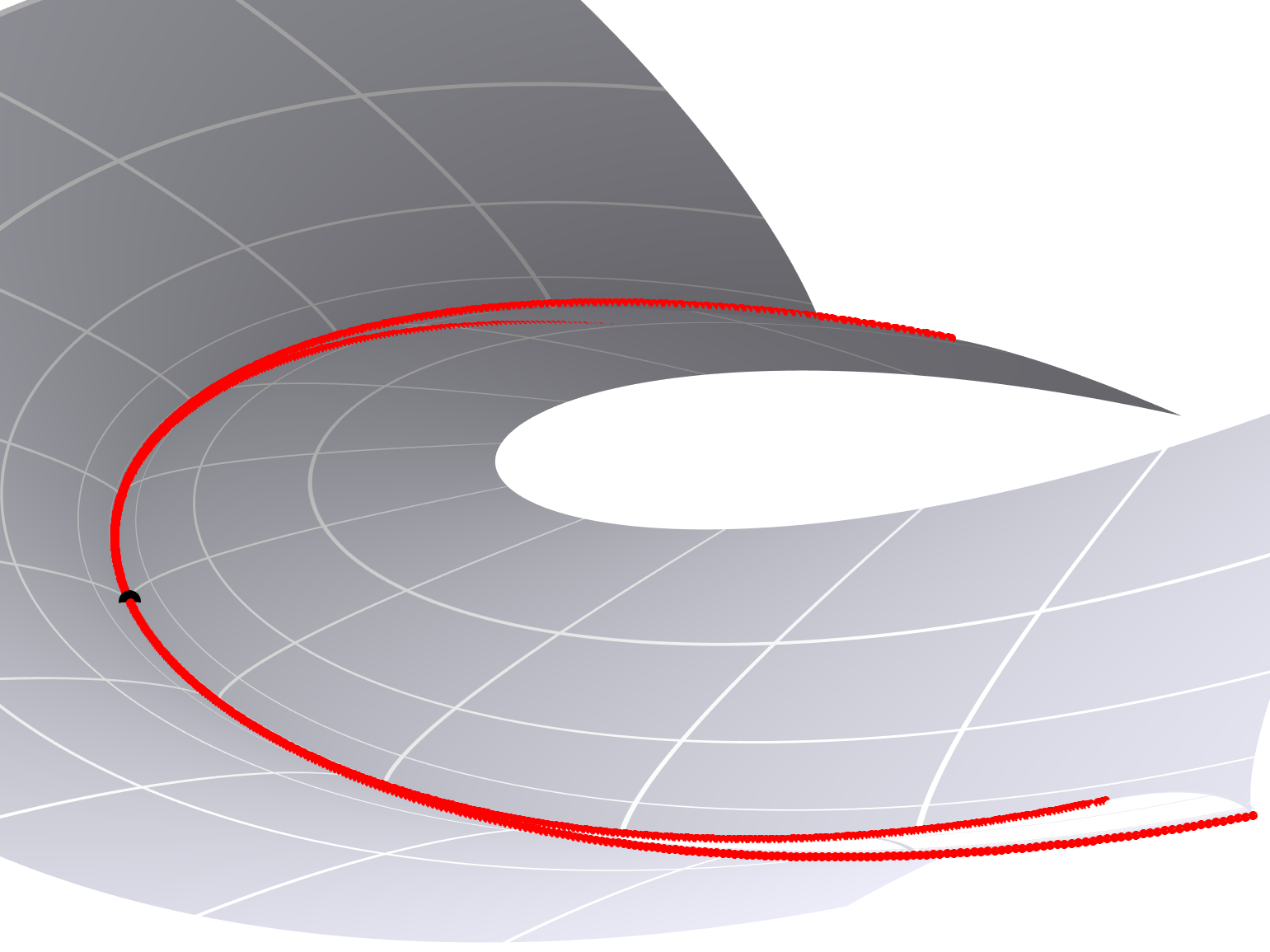}
          \includegraphics[width=0.475\textwidth]{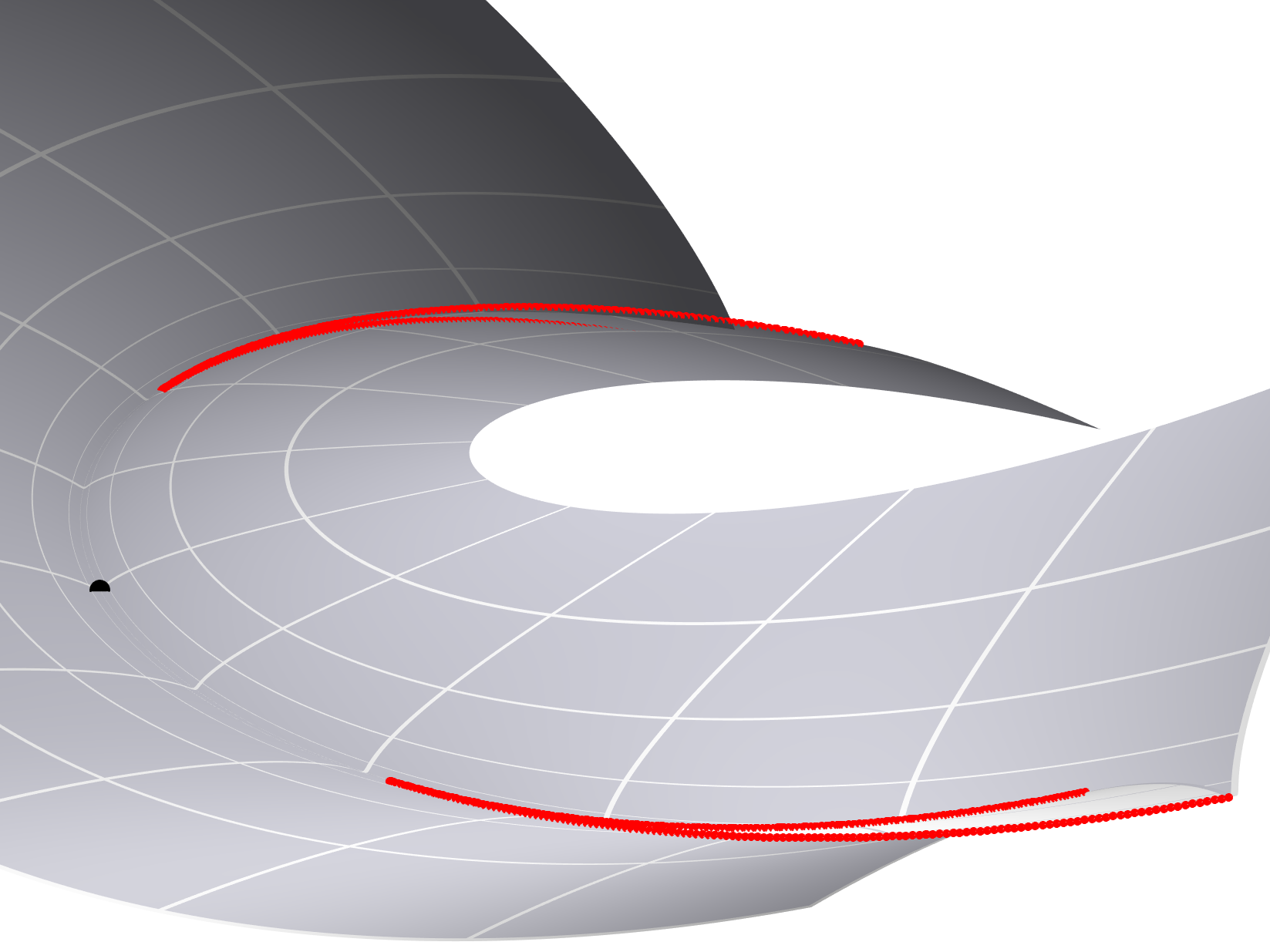}
          \vspace{1ex}
      \end{subfigure}\\
      \begin{subfigure}{0.56\columnwidth}
      \hfil
        \includegraphics[width=0.31\textwidth]{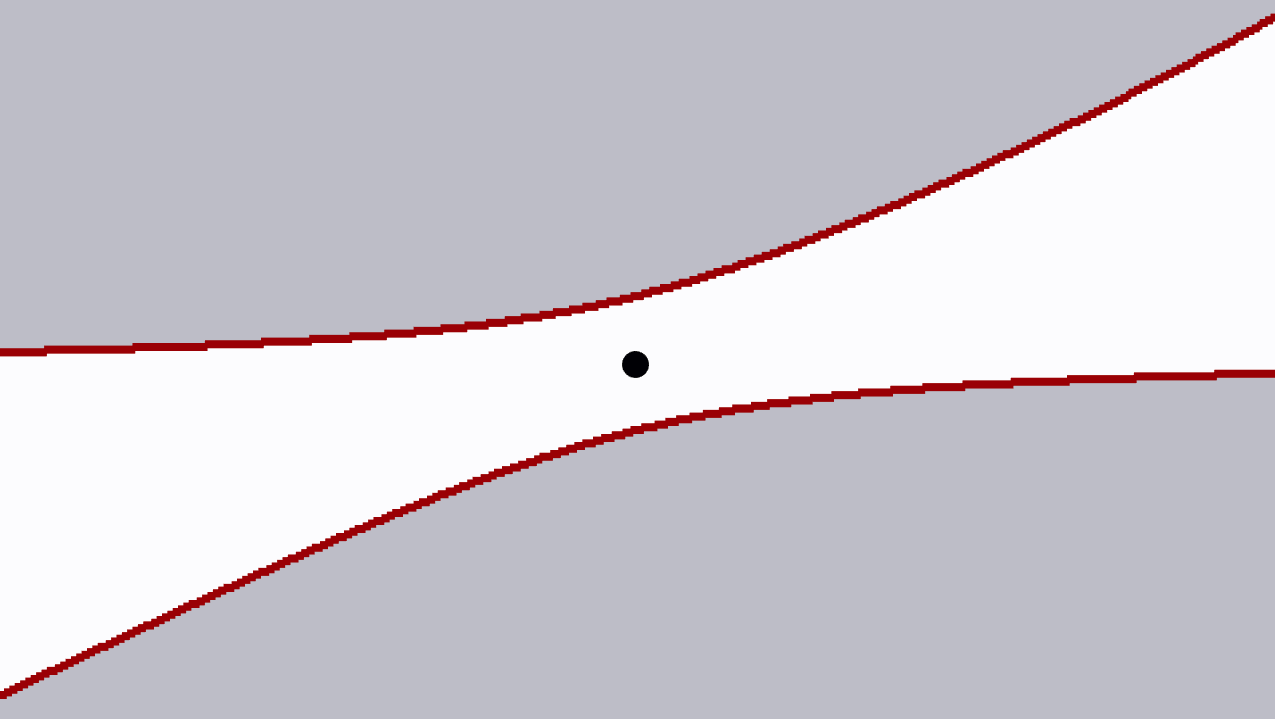}
        \includegraphics[width=0.31\textwidth]{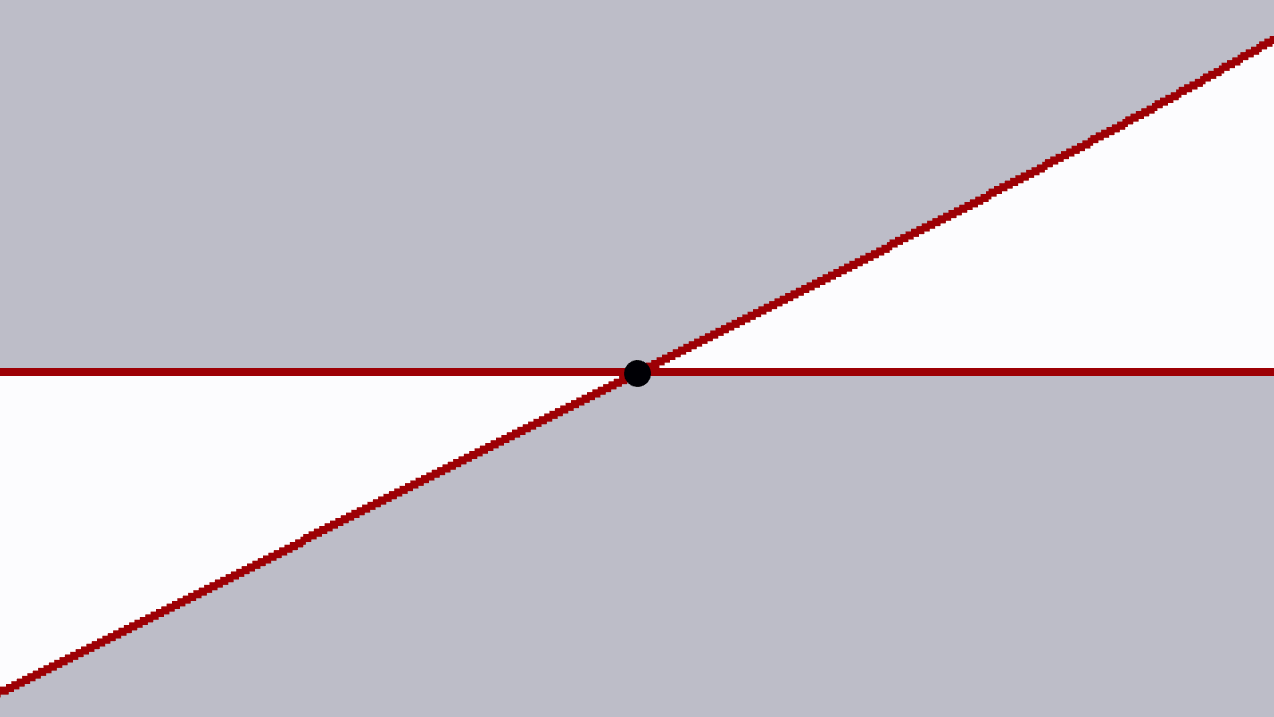}
        \includegraphics[width=0.31\textwidth]{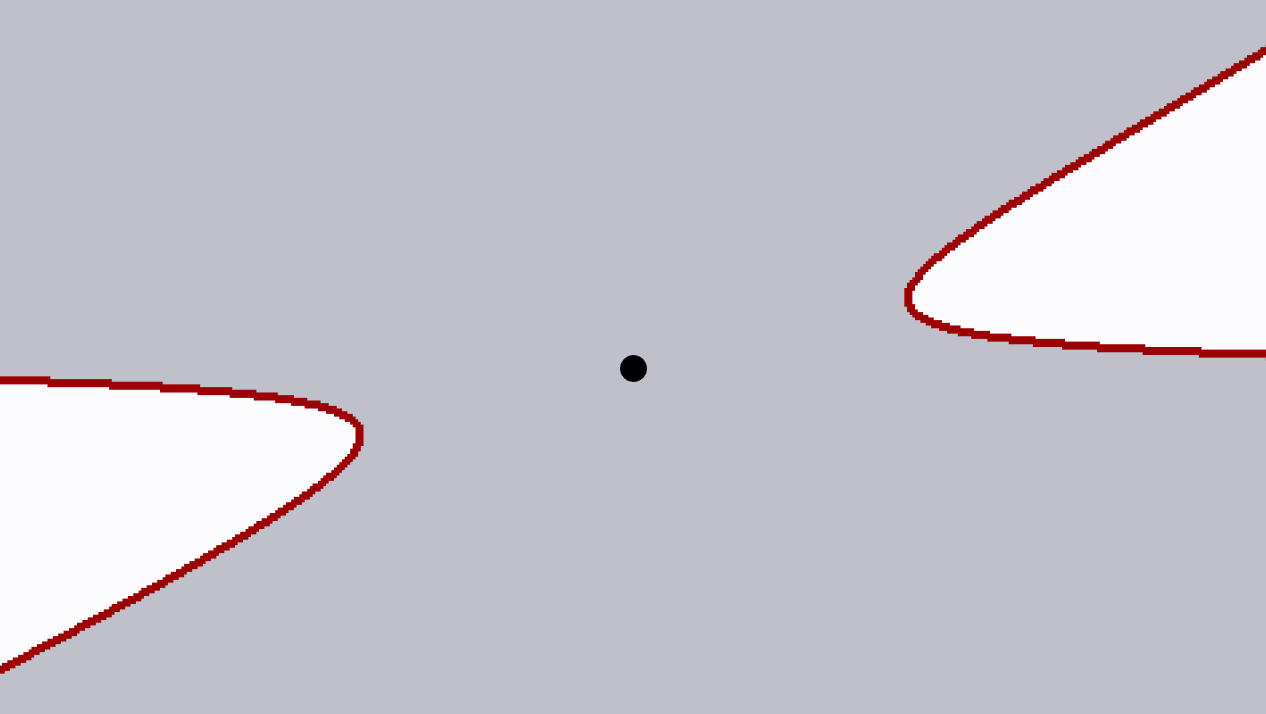}        
      \end{subfigure}
    \end{tabular}
  \end{tabular}
  \caption{
 Shcherbak singularity bifurcation from Example \ref{shcherbakexample}  (see also Figure \ref{fig:Shcherbak}). In the first image, the surface self intersection is shown by black curves. The singular sets in the coordinate domains are shown below to the right.}
\label{figshcherbakbif}
\end{figure}


\section{Local singularities of Lorentzian-harmonic maps into the 2-sphere}\label{sec:Classification}

We are interested in the singularities of germs of analytic Lorentzian-harmonic maps $N$ allowing analytic changes of coordinates in the source and target. Then $N$ is equivalent to the map-germ from $\mathbb R^2,0\to \mathbb R^2,0$ given by $(x,y)\mapsto (u(x,y),v(x,y))$, where $u$ and $v$ are as in \S \ref{sec:harmonicmaps}. We still denote by $N$ the map-germ $(u,v).$

Let $\mathcal{E}(2,1)$ denote the ring of germs of analytic functions $(\mathbb R^2,0)\to \mathbb R$, 
${\mathcal M}_2$ its unique maximal ideal and $\mathcal{E}(2,2)$ the $\mathcal{E}(2,1)$-module 
of analytic map-germs $(\mathbb R^2,0)\to \mathbb R^2$.
Consider the action of the group $\mathcal A$ of pairs of germs of analytic  
diffeomorphisms $(h,k)$ of the source and target on 
${\mathcal M}_2.\mathcal{E}(2,2)$ given by
$k\circ g\circ h^{-1}$, for $g\in {\mathcal M}_2.\mathcal{E}(2,2)$ 
(see, for example, \cite{arnoldetal,Martinet,Wall}). 
A germ $g$ is said to be finitely $\mathcal A$-determined if 
there exists an integer $k$ such that any map-germ with the same $k$-jet as $g$ 
is $\mathcal A$-equivalent to $g$. 
Let $\mathcal A_{k}$ be the subgroup of $\mathcal A$ whose elements
have the identity $k$-jets. 
The group $\mathcal A_k$ is a normal subgroup of $\mathcal
A$. Define
$\mathcal A^{(k)}=\mathcal A/\mathcal A_k$. The elements of $\mathcal A^{(k)}$ are the
$k$-jets of the elements of $\mathcal A$.
The action of $\mathcal A$ on ${\mathcal M}_2.{\mathcal E}(2,2)$
induces an action
of $\mathcal A^{(k)}$ on $J^k(2,2)$ as follows. For $j^kg\in J^k(2,2)$
and $j^k\mathfrak{h}\in \mathcal A^{(k)}$, $j^k\mathfrak{h}.j^kg=j^k(\mathfrak{h}.g).$

The tangent
space to the $\mathcal{A}$-orbit of $g$ at the germ $g$ is given by
 $$
L{\mathcal A}{\cdot}{g}={\mathcal M}_2.\{g_{x},g_{y}\}+g^*({\mathcal M}_2).\{e_1,e_2\},
$$
where $g_{x}$ and $g_y$ are the partial derivatives of $g$, $e_1,e_2$ denote
the standard basis vectors of ${\mathbb R}^2$ considered as elements
of ${\mathcal E}(2,2)$, and $f^*({\mathcal M}_2)$ is the pull-back of the maximal
ideal in ${\mathcal E}_2$. 
The extended tangent space to the $\mathcal{A}$-orbit of $g$ at the germ $g$
is given by
$$
L_e{\mathcal A}{\cdot}{g}={\mathcal E}_2.\{g_{x},g_{y}\}+g^*({\mathcal E}_2).\{e_1,e_2\}.
$$

We ask which finitely $\mathcal A$-determined singularities of map-germs in 
$\mathcal{E}(2,2)$ have a Lorentzian-harmonic map-germ in their $\mathcal A$-orbit, that is, 
which singularities can be represented by a germ of a Lorentzian-harmonic map into the 2-sphere.
We also ask whether an $\mathcal A_e$-versal deformation of the singularity
can be realized by families of Lorentzian-harmonic maps into the 2-sphere. 
(This means that the initial 
Lorentzian-harmonic map-germ can be deformed within the set of Lorentzian-harmonic map-germs and 
the deformation is $\mathcal A_e$-versal.)

The most extensive classification of finitely $\mathcal A$-determined 
singularities of maps germs in $\mathcal{E}(2,2)$ of $rank$ $1$ is carried out by Rieger in \cite{rieger} where he gave the following list of orbits of 
$\mathcal A$-codimension $\le 6$ which includes all the simple germs obtained in \cite{Goryunov} (the parameters $\alpha$ and $\beta$ are moduli and take values in $\mathbb R$ with certain exceptional values removed, 
see \cite{rieger} for details):

\begin{tabular}{ll}
$\bullet$ & $(x,y^2)$ \\
$\bullet$ & $(x,xy+P_1(y))$, $P_1=y^3,\, y^4,\, y^5\pm y^7,\, y^5,\,  
y^6\pm y^8+\alpha y^9,\,  y^6+ y^9$ or $y^7\pm y^9+
\alpha y^{10}+\beta y^{11}$\\
$\bullet$ & $(x,y^3\pm x^ky)$, $k\ge 2$\\
$\bullet$ & $(x,xy^2+ P_2(y))$, $P_2=y^4+y^{2k+1} (k\ge 2),\, 
y^5+y^6,\,  y^5\pm y^9,\,  y^5$ or $y^6+y^7+\alpha y^9$ \\
$\bullet$ & $(x,x^2y+P_3(x,y))$, $P_3=y^4\pm y^5,\,  y^4$ or $
xy^3+\alpha y^5+y^6+\beta y^7$\\
$\bullet$ &$(x,x^3y+\alpha x^2y^2+y^4 +x^3y^2)$
\end{tabular}

\smallskip
The $\mathcal A$-simple map-germs of rank $0$  
are classified in \cite{riegerruas} and are as follows:

\begin{tabular}{ll}
$\bullet$ & $I_{2,2}^{l,m}: (x^2+y^{2l+1},y^2+x^{2m+1}),\, l\ge m\ge 1$\\
$\bullet$ & $I_{2,2}^{l}: (x^2-y^2+x^{2l+1},xy),\, l\ge 1.$
\end{tabular}

\smallskip
We answer the above two questions for certain singularities including those in Rieger's list 
and for the $\mathcal A$-simple \mbox{rank $0$} map-germs.
A map-germ is finitely $\mathcal A$-determined if and only if 
a certain of its $n$-jet is finitely $\mathcal A$-determined. Therefore, we
can use the expressions for the $n$-jets of $u$ and $v$ in \eqref{n-jets(u,v)} and
Theorem \ref{theo:kjetLorentzH} to assert the existence of $N$. For this reason, 
in all of what follows, we take $N$ to be a germ of an analytic Lorentzian-harmonic map into the 2-sphere.
We start with rank 1 germs. 

\begin{theorem}\label{theo:singLOrzHar}
{\rm (i)} The singularities $(x,y^2)$, 
    $(x,xy+y^k)$, $(x,y^3\pm x^ky)$, those with $j^3N\sim_{\mathcal A^{(3)}} (x,xy^2)$ and $(x,x^3y+\alpha x^2y^2+y^4 +x^3y^2)$ 
can be represented by germs of Lorentzian-harmonic maps into the 2-sphere.
Furthermore, there are $\mathcal A_e$-versal deformations of  these singularities by germs of families of Lorentzian-harmonic maps into the 2-sphere.

{\rm (ii)} Map-germs with a 3-jet $\mathcal A^{(3)}$-equivalent to $(x,x^2y)$ cannot be represented by Lorentzian-harmonic map-germs into the 2-sphere.
\end{theorem}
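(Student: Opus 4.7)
The proofs of both parts rely on Theorem \ref{theo:kjetLorentzH}, which identifies the germ of any analytic Lorentzian-harmonic map $N$ (with $N(0)=(0,0,1)$) with a tuple of boundary coefficients $({\bf a}_1,{\bf a}_2,{\bf b}_1,{\bf b}_2)\in\mathbb R^{4n}$; the remaining cross coefficients $a_{ki},b_{ki}$ ($1\le i\le k-1$) are explicit polynomials in the lower-order boundary coefficients.

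For part (i) the plan is constructive: for each listed singularity I choose explicit boundary coefficients, possibly followed by linear source and target changes, so that the resulting $n$-jet is $\mathcal A^{(n)}$-equivalent to the normal form. The fold $(x,y^2)$ comes from $a_{10}=1$, $b_{22}=1$, all other boundary coefficients zero; being stable and $2$-determined, no deformation is needed. The Morin series $(x,xy+y^k)$ is realised by $a_{10}=1$ with $b_{20}$, $b_{22}$ of opposite sign (making the Hessian of $j^2 v$ indefinite) together with a suitable higher $b_{kk}$; a linear source change $(x,y)\mapsto(x,\gamma x+\delta y)$ combined with a target change $v\mapsto v-\alpha u^2$ converts $j^2 v$ into $xy$, and the $y^k$ summand is matched using $b_{kk}$. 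The germs $(x,y^3\pm x^ky)$, those with $j^3 N\sim_{\mathcal A^{(3)}}(x,xy^2)$ (obtained by taking $b_{22}=b_{33}=0$ with $a_{22}b_{20}\neq 0$, as the calculation in part (ii) shows), and $(x,x^3y+\alpha x^2y^2+y^4+x^3y^2)$ are handled analogously, matched to Rieger's recognition criteria. For $\mathcal A_e$-versality, one varies as many additional boundary coefficients as the $\mathcal A_e$-codimension; since boundary coefficients parametrise independent Taylor coefficients of $(u,v)$, standard transversality of the extended tangent space $L_e \mathcal A\cdot N$ in $J^n(2,2)$ yields versality of the resulting families of Lorentzian-harmonic maps.

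For part (ii) I argue by contradiction. Suppose $j^3 N\sim_{\mathcal A^{(3)}}(x,x^2y)$. Since $(x,x^2y)$ has rank $1$ at the origin, after linear source and target changes we may assume $a_{10}=1$ and $a_{11}=b_{10}=b_{11}=0$; substituting into \eqref{n-jets(u,v)} makes every $xy$, $x^2y$ and $xy^2$ coefficient of $u$ and $v$ vanish, so
\begin{equation*}
j^3 u = x + a_{20}x^2 + a_{22}y^2 + a_{30}x^3 + a_{33}y^3,\qquad j^3 v = b_{20}x^2 + b_{22}y^2 + b_{30}x^3 + b_{33}y^3.
\end{equation*}
The intrinsic Hessian $H(\partial_y,\partial_y)=2b_{22}$ and third-order tensor $T(\partial_y,\partial_y,\partial_y)=6b_{33}$ on the kernel line $K=\ker dN_0$, taken modulo $dN_0(T_0\mathbb R^2)$, are $\mathcal A$-invariants that vanish for $(x,x^2y)$, forcing $b_{22}=b_{33}=0$. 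Apply the source change $(X,Y)=(u(x,y),y)$ (a valid diffeomorphism since $u_x(0)=1$) followed by the target change $v\mapsto v-b_{20}u^2-(b_{30}-2a_{20}b_{20})u^3$ to kill the pure $X$-terms of $v$; a direct computation yields
\begin{equation*}
j^3 \tilde v = -2\,a_{22}\,b_{20}\, XY^2 + O(4).
\end{equation*}
If $a_{22}b_{20}=0$ then $j^3 N\sim_{\mathcal A^{(3)}}(X,0)$, which is not $\mathcal A^{(3)}$-equivalent to $(X,X^2Y)$. If $a_{22}b_{20}\neq 0$ then after target scaling $j^3 N\sim_{\mathcal A^{(3)}}(X,XY^2)$; but $(X,XY^2)$ and $(X,X^2Y)$ are distinct $\mathcal A^{(3)}$-classes in Rieger's list (respectively the $3$-jets of $(x,xy^2+P_2(y))$ and $(x,x^2y+P_3(x,y))$). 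In either case the assumption is contradicted, proving (ii).

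The main obstacle for part (i) is the laborious case-by-case verification that each listed normal form, particularly those with moduli, is realised by specific boundary-coefficient choices and that the corresponding deformations are $\mathcal A_e$-versal. For part (ii), the key technical step is the reduction of $j^3 v$ to $-2a_{22}b_{20}\,XY^2$ using the intrinsic invariants $b_{22}=b_{33}=0$, together with the careful verification that no further $\mathcal A^{(3)}$-change (in particular none of the form $(x,y)\mapsto(x+P_{11}xy,y)$) can produce an $X^2Y$ term rather than an $XY^2$ term; the final distinction between $XY^2$ and $X^2Y$ is then immediate from Rieger's classification.
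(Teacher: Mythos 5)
Your part (i) follows essentially the same route as the paper: reduce to $(x,g(x,y))$, match the free boundary coefficients $a_{i0},a_{ii},b_{i0},b_{ii}$ to Rieger's normal forms, and obtain $\mathcal A_e$-versality by varying further coefficients. That is fine, modulo the case-by-case computations you acknowledge deferring.

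Part (ii), however, has a genuine gap. Your opening normalization ``after linear source and target changes we may assume $a_{10}=1$ and $a_{11}=b_{10}=b_{11}=0$'' is not available in general. Setting $a_{11}=b_{11}=0$ means $N_y(0)=0$, i.e.\ that $\ker \dd N_0$ is the null direction $\partial_y$; but for a rank-one singular Lorentzian-harmonic map the kernel is spanned by $(a_{11},-a_{10})$ and is generically transverse to both null directions. The only source changes that preserve the jet structure \eqref{n-jets(u,v)} you then substitute into (in particular the vanishing of the $xy$, $x^2y$, $xy^2$ cross-terms, which is a consequence of the harmonicity PDE written in \emph{null} coordinates) are those of the form $(x,y)\mapsto(\phi(x),\psi(y))$ up to swapping the factors, and these cannot rotate a non-null kernel onto $\partial_y$. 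A general linear source change destroys exactly the constraints you rely on. So your argument only disposes of the non-wave-front case $a_{11}=b_{11}=0$ (where your reduction to $j^3\tilde v=-2a_{22}b_{20}XY^2$ is correct). The missing case $a_{11}\neq 0\neq a_{10}$ is precisely where the content of the paper's proof lies: there, after forcing $j^2g\equiv 0$, one computes $a_{10}^4 j^3g=l_1x^2y+l_2xy^2+l_3y^3$ with $l_1=-3a_{11}(a_{10}b_{33}-a_{33}b_{10})$ and $l_2=3a_{11}^2(a_{10}b_{33}-a_{33}b_{10})=-a_{11}l_1$, so the $x^2y$- and $xy^2$-coefficients are proportional and $l_2=0$ forces $l_1=0$; hence a $3$-jet $(x,x^2y)$ (which needs $l_1\neq 0$, $l_2=l_3=0$) can never arise. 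You need either this computation or an invariant argument (e.g.\ that the Hessian of $\sigma=\det \dd N$ is diagonal in null coordinates, so its kernel is null, whereas for $(x,x^2y)$ it must coincide with $\ker \dd N_0$) to close part (ii).
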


\begin{proof}
We suppose, without loss of generality, that $N_x(0,0)\ne 0,$ so $a_{10}\ne 0$ or $b_{10}\ne 0$. 
In all of what follows, we suppose that $a_{10}\ne 0$. Then $N$ is $\mathcal A$-equivalent to a germ of the form $(x,g(x,y))$, for some germ of an analytic function $g$. We can make successive changes of coordinates in the source and target to obtain the desired $n$-jet of $g$ (we do this with the help of Maple).

The map-germ $N$ is singular if, and only if, 
$a_{10}b_{11}-a_{11}b_{10}=0$. Suppose that this is the case, so $b_{11}=a_{11}b_{10}/a_{10}$. Then the 2-jet of $g$ is given by
$$
j^2g=\frac{2a_{11}}{a_{10}^3}(a_{10}b_{20}-a_{20}b_{10})xy+
\frac{1}{a_{10}^3}\left((a_{10}b_{20}-a_{20}b_{10})a_{11}^2+(a_{10}b_{22}-a_{22}b_{10})a_{10}^2)\right)y^2.
$$

Clearly, we can choose $N$ so that the coefficient of $y^2$ in  $j^2g$ does not vanish. In that case $N\sim_{\mathcal A} (x,y^2)$ and is stable.

\smallskip
Suppose that $a_{11}(a_{10}b_{20}-a_{20}b_{10})\ne 0$ and $(a_{10}b_{20}-a_{20}b_{10})a_{11}^2+(a_{10}b_{22}-a_{22}b_{10})a_{10}^2=0$, so  $b_{22}=(a_{10}^2a_{22}b_{10}-a_{11}^2(a_{10}b_{20}-a_{20}b_{10}))/a_{10}^3$. Then 
$j^2g\sim_{\mathcal A^{(2)}} (x,xy)$. Any finitely $\mathcal A$-determined germ with this 2-jet is equivalent to $(x,xy+P(y))$ for some polynomial function $P$ (\cite{rieger}). The coefficient of $y^k$ in $P(y)$ is equal to $b_{kk}+Q$, with $Q$ a polynomial in $a_{i0},a_{ii},b_{i0},b_{ii}$, $1\le i\le k-1$.
Clearly, we can choose appropriate $b_{kk}$ to represent all finitely $\mathcal A$-determined singularities $(x,xy+P(y))$
with Lorentzian-harmonic map-germs. Also, $\mathcal A_e$-versal deformations of these singularities are of the form 
$(x,xy+P(y)+\sum_{i=1}^ku_iy^i)$, so by deforming $v(0,y)$ we can get $\mathcal A_e$-versal deformations of the singularities of $N$ by Lorientzian-harmonic map-germs.

\smallskip
Suppose that $j^2g\equiv 0$. 
We have two cases to consider: (1) $a_{11}\ne 0$ (then $N$ is the Gauss map of a pseudospherical surface $f$ which is a wave front) 
and $a_{10}b_{20}-a_{20}b_{10}=0$, or (2) $a_{11}=0$ (then $b_{11}=0$, so $N_y(0,0)=0$; $f$ 
is not a wave front by Proposition \ref{prop:f_frontal_wavefront}). 
In all of what follows, we consider the case (1), case (2) follows similarly.
Then the 3-jet of $g$ is given by
$$
\small{
\begin{array}{rl}
a_{10}^4j^3g&=l_1x^2y+l_2xy^2+l_3y^3\\
&=3a_{11}(a_{10}b_{33}-a_{33}b_{10})(a_{11}xy^2-x^2y)+((a_{10}b_{33}-a_{33}b_{10})a_{10}^3-(a_{10}b_{30}-a_{30}b_{10})a_{11}^3)y^3.
\end{array}
}
$$

Following the criteria in Table 6.1 in \cite{izumiyaetal}, 
we have a lips/beaks singularity, i.e.,  $N\sim_{\mathcal A} (x,y^3\pm x^2y)$, 
if and only if $l_3\ne 0$ and $l_2^2-3l_1l_3=(a_{10}b_{30}-a_{30}b_{10})(a_{10}b_{33}-a_{33}b_{10})\ne 0$. 
 The lips (resp. beaks) occurs when $l_3\ne 0$ and $g_y=3l_3y^2+2l_2yx+l_1x^2+O(3)$ has an $A_1^+$ (resp. $A_1^-$)-singularity, that is, when $l_2^2-3l_1l_3<0$ (resp. $>0$).
Clearly, both these singularities can occur.

Suppose that $l_3\ne 0$ and $l_2^2-3l_1l_3=0$. Then $j^3N\sim_{\mathcal A^3}(x,y^3)$. We make successive changes of coordinates to reduce the $k+1$-jet of $N$ to $(x,y^3+c x^ky)$. Using \eqref{sys:Pdes}, we can show that $c$ is a polynomial in $a_{i0},a_{ii},b_{i0},b_{ii}$, $1\le i\le k-1$. When $c$ is considered as a polynomial in  $b_{(k-1)0}$, the coefficient of the linear term $b_{(k-1)0}$ is $2b_{10}b_{11}-(k-1)a_{10}a_{11}$. Therefore, we can choose $u$ and $v$ so that $c\ne 0$, that is, all the singularities in the series
$(x,y^3\pm x^ky)$ can be represented by Lorentzian-harmonic map-germs.
Using the same argument, we can show that these singularities can be $\mathcal A_e$-versally unfolded by germs of families of Lorentzian-harmonic maps.

\smallskip
If we take $l_3=0$ above and $a_{10}b_{33}-a_{33}b_{10}\ne 0$, then 
$j^2N\sim_{\mathcal A^3}(x,xy^2)$. We can show, by similar arguments to those in (iii) that any finitely $\mathcal A$-determined germ of the from $(x,xy^2+P(y))$ can be represented by
a Lorentzian-harmonic map and its singularity can also be $\mathcal A_e$-versally unfolded by germs of families of Lorentzian-harmonic maps.

\smallskip
We take $l_3=l_2$ above. Then we can choose $u,v$ so that 
$N\sim_{\mathcal A}-(x,x^3y+\alpha x^2y^2+y^4 +x^3y^2)$ (the expression of $j^5g$ is too lengthy to reproduce here). We can also $\mathcal A_e$-versally unfold this singularity by germs of families of Lorentzian-harmonic maps.

\smallskip
(ii) If we have $l_2=0$ above, then $l_1=0$. Therefore, map-germs with a 3-jet $\mathcal A^{(3)}$-equivalent to $(x,x^2y)$ cannot be represented by Lorentzian-harmonic map-germs.
\end{proof}

We deal now with rank zero map-germs. 

\begin{theorem}\label{theo:Rank0}
	{\rm (i)} The singularity $I_{2,2}^{l,m}$ can be represented by
	a germ of a Lorentzian-harmonic map into the 2-sphere. There is an $\mathcal A_e$-versal deformation of this singularity by germs of families of Lorentzian-harmonic maps. 
	
	{\rm (ii)} The singularities $I_{2,2}^{l}$ cannot be represented by
	a germ of a Lorentzian-harmonic map into the 2-sphere.
	
	{\rm (iii)} There are no finitely $\mathcal A$-determined germs of Lorentzian-harmonic maps into the 2-sphere with a zero 2-jet at the singular point.
\end{theorem}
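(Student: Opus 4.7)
The plan is to mirror the approach of Theorem \ref{theo:singLOrzHar}: use the jet formulas \eqref{n-jets(u,v)} together with Theorem \ref{theo:kjetLorentzH} to determine precisely which $\mathcal{A}$-types are reachable from the Lorentzian-harmonic jet space of rank-$0$ germs. The decisive structural fact is that, since $a_{21}=b_{21}=0$ identically for any Lorentzian-harmonic map in null coordinates, a rank-$0$ germ (all first-order coefficients vanishing) has $2$-jet of the ``diagonal'' form $(a_{20}x^2+a_{22}y^2,\,b_{20}x^2+b_{22}y^2)$ with no $xy$ cross-term.

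For part (i), under the open condition $a_{20}b_{22}-a_{22}b_{20}\neq 0$ a linear target change combined with source rescaling $(x,y)\mapsto(\alpha x,\beta y)$ reduces the $2$-jet to $(x^2,y^2)$, the $2$-jet of every $I_{2,2}^{l,m}$. To reach the normal form $(x^2+y^{2l+1},\,y^2+x^{2m+1})$ I would exploit the freedom in the odd-degree diagonal coefficients, choosing $a_{(2l+1)(2l+1)}\neq 0$ and $b_{(2m+1)0}\neq 0$ with the lower odd diagonal coefficients zero up to the required order of finite determinacy. Theorem \ref{theo:kjetLorentzH} produces such a Lorentzian-harmonic germ, and the finite $\mathcal{A}$-determinacy of $I_{2,2}^{l,m}$ absorbs higher-order contributions. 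The $\mathcal{A}_e$-versal unfolding follows by perturbing the same diagonal coefficients along an $\mathcal{A}_e$-basis of the local algebra.

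For part (ii), the pencil associated to a rank-$0$ Lorentzian-harmonic $2$-jet has discriminant $(ta_{20}+sb_{20})(ta_{22}+sb_{22})$, always factoring into two real linear forms, so it carries two real degenerate members. The pencil $\{t(x^2-y^2)+s\,xy\}$ of $I_{2,2}^l$ has discriminant $-t^2-s^2/4$, with no real root. Since the number of real degenerate members is an $\mathcal{A}^{(2)}$-invariant (linear source and target changes act on the pencil by $\mathrm{GL}(2)$-actions respecting real degeneracy), the $2$-jet of $I_{2,2}^l$ is not in the $\mathcal{A}^{(2)}$-orbit of any Lorentzian-harmonic $2$-jet, so $I_{2,2}^l$ cannot be represented.

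Part (iii) is the hard part. With $j^2 N=0$, formulae \eqref{n-jets(u,v)} force $P_{3i}=P_{4i}=0$ because they depend polynomially on $a_{l0},a_{ll},b_{l0},b_{ll}$ for $l\leq k-2$, all of which vanish; hence
\[
j^4N=\bigl(a_{30}x^3+a_{33}y^3+a_{40}x^4+a_{44}y^4,\;b_{30}x^3+b_{33}y^3+b_{40}x^4+b_{44}y^4\bigr),
\]
with no $x^{k-i}y^i$ cross-terms ($1\leq i\leq k-1$) for $k\leq 4$. Consequently the Jacobian $\sigma=u_xv_y-u_yv_x$ has leading part $9(a_{30}b_{33}-a_{33}b_{30})x^2y^2$ and vanishes identically on $\{x=0\}\cup\{y=0\}$ to order at least $2$ transverse to each branch. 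At a nearby singular point $p=(0,y_0)$, $y_0\neq 0$, this excludes the germ of $N$ at $p$ from being a fold or a cusp (both of which require simple transverse vanishing of $\sigma$); a direct reduction (target linear change to normalise the rank-$1$ image direction, then diffeomorphism of the source along the kernel direction) identifies the germ as $(X^3,Y)$, which is not a stable $\mathcal{A}$-type. Since such non-stable singularities accumulate at the origin along both axes, the Mather-Gaffney geometric criterion precludes finite $\mathcal{A}$-determinacy. In the degenerate subcase $a_{30}b_{33}-a_{33}b_{30}=0$, $\sigma$ vanishes to strictly higher order and the same conclusion holds a fortiori. The main obstacle to making this rigorous is to certify the $(X^3,Y)$ normal form at all nearby singular points across every degenerate configuration.
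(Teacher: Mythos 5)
Parts (i) and (ii) are essentially sound and at the same level of rigour as the paper. For (ii) you argue via the number of real degenerate members of the pencil of quadratic forms, which is a clean $\mathcal A^{(2)}$-invariant for rank-$0$ germs (only the linear parts of the diffeomorphisms act on the $2$-jet, the source by congruence and the target by a change of pencil basis); the paper instead just lists the possible $\mathcal A^{(2)}$-normal forms of a diagonal pair $(a_{20}x^2+a_{22}y^2,\,b_{20}x^2+b_{22}y^2)$, namely $(x^2,y^2)$, $(x^2\pm y^2,0)$, $(x^2,0)$, $(0,0)$, and observes that $(x^2-y^2,xy)$ is not among them, also noting the geometric obstruction that a rank-$0$ Lorentzian-harmonic map has both null curves in its singular set while the singular set of $I_{2,2}^{l}$ is an isolated point. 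Either route works. Part (i) as you sketch it matches what the paper does (it only says ``similar arguments to Theorem \ref{theo:singLOrzHar}'').

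The genuine gap is in (iii), exactly where you flag it, and it cannot be closed from any finite jet. Knowing that the lowest-degree part of $\sigma=u_xv_y-u_yv_x$ is $9(a_{30}b_{33}-a_{33}b_{30})x^2y^2$ says nothing about $\sigma$ at points $(0,y_0)$ with $y_0\neq 0$: a function with that leading term, such as $x^2y^2+xy^5$, satisfies $\sigma(0,y_0)=0$ but $\sigma_x(0,y_0)=y_0^5\neq 0$, so the nearby singular points would be ordinary folds and the instability could be isolated. What is missing is a statement about the \emph{whole germ}, and this is the paper's key step: an induction on the system \eqref{sys:Pdes}, using that $u,v$ and all their derivatives up to order two vanish at the origin, shows that $\partial^{k+1}u/\partial x^k\partial y$, $\partial^{k+1}u/\partial x\partial y^k$, $\partial^{k+2}u/\partial x^k\partial y^2$ and $\partial^{k+2}u/\partial x^2\partial y^k$ all vanish at the origin for every $k\ge 0$; by analyticity $u=x^3A(x)+y^3B(y)+x^3y^3C(x,y)$ (likewise $v$), whence the exact factorization $\sigma=x^2y^2\Lambda(x,y)$. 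Then $\dd\sigma$ vanishes identically along both null curves, so no point of either curve is a fold or a cusp, the unstable locus is non-isolated, and Gaffney's criterion (Theorem 2.1 in \cite{Wall}) gives non-finite-determinacy. Note also that once you have this factorization you do not need to identify a normal form such as $(X^3,Y)$ at the nearby points: it suffices that $\sigma$ has a non-regular zero there, which already rules out both stable local types. (A geometric shortcut to the same factorization: $\langle N_x,N_x\rangle$ is a function of $x$ alone for any Lorentzian-harmonic $N$, which together with the vanishing $2$-jet forces $N_x$ and $N_{xx}$ to vanish identically on $\{x=0\}$, and symmetrically in $y$.)
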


\begin{proof}
(i) and (ii) Here $a_{10}=a_{11}=b_{10}=b_{11}=0$, so
$$
j^2N=(a_{20}x^2+a_{22}y^2,b_{20}x^2+b_{22}y^2).
$$

If $a_{20}b_{22}-a_{22}b_{20}\ne 0$, then $j^2N\sim_{\mathcal A^{(2)}}(x^2,y^2)$, 
otherwise it is $\mathcal A^{(2)}$-equivalent to $(x^2\pm y^2,0)$, $(x^2,0)$ or $(0,0)$. Therefore, the singularities $I_{2,2}^{l}$ cannot be represented by
a germ of a Lorentzian-harmonic map into the 2-sphere, which proves (ii). (A geometric argument for excluding this singularity is the following: when $N$ has rank $0$, its singular set contains the two null curves at the singular point, and 
the singular set of the  $I_{2,2}^{l}$-singularity consist of an isolated point.)

Item (i) follows by similar arguments to those in the proof of Theorem \ref{theo:singLOrzHar}.

(iii) We prove by induction using \eqref{sys:Pdes} and the fact that the functions $u$ and $v$ and their first and second order derivatives are zero at the origin, that 
$\partial^{k+1}u/\partial x^k\partial y$, 
$\partial^{k+1}u/\partial x\partial y^k$, 
$\partial^{k+2}u/\partial x^k\partial y^2$ and 
$\partial^{k+2}u/\partial x^2\partial y^k$ all vanish at the origin for $k\ge 0$. Therefore, $u$ can be written in the form $u(x,y)=x^3A(x)+y^3B(y)+x^3y^3C(x,y),$ for some germs of analytic functions $A,B,C$. The same holds for the function $v$. It follows that the singular set $\Sigma$ of $N$ is given by $\lambda(x,y)=(u_xv_y-u_yv_x)(x,y)=x^2y^2\Lambda(x,y),$ for some germ of an analytic function $\Lambda$. Therefore $\Sigma$ has non-isolated singularities along the null curves $x=0$ and $y=0$. It follows by Gaffney's geometric criteria (see Theorem 2.1 in \cite{Wall}) that $N$
is not finitely $\mathcal A$-determined.
\end{proof}


\section{Appendix: Local singularities of Lorentzian-harmonic maps into the plane}\label{sec:Classification_plane}

We consider here local singularities of Lorentzian-harmonic maps ${\mathcal N}:\real^{1,1},0 \to \real^2,0$, that is, 
solutions of the wave equation ${\mathcal N}_{xy}=0$, where $(x,y)=(\bar x-t, \bar x+t)$ are null coordinates for the Lorentz plane. Here we take $\mathcal{N}$ and all other functions to be sufficiently differentiable for the questions that arise.  The general solution has the form
$$
\mathcal N(x,y) =(f_1(x) + g_1(y), f_2(x) + g_2(y)),
$$
i.e., is given as a sum of two arbitrary maps in $x$ and  $y$. 
In what follows we suppose, without loss of generality, that the point of interest is the origin $(0,0)\in \mathbb R^{1,1}$. We denote by $\Sigma$ the singular set of ${\mathcal N}.$

\begin{proposition}\label{prop:LHmaps_plane_gen}
Let ${\mathcal N}$ be a germ, at the origin, of a Lorentzian-harmonic map into the plane. 

{\rm (i)} If ${\mathcal N}_x(0,0)=(0,0)$ (resp. ${\mathcal N}_y(0,0)=(0,0)$) then the null curve $x=0$ (resp. $y=0$) is locally a part of the singular set $\Sigma$ of ${\mathcal N}$.

{\rm (ii)} If $rank (\dd {\mathcal N}_{(0,0)})=0$ then both the null curves $x=0$ and $y=0$ are locally parts of the singular set $\Sigma$ of ${\mathcal N}$.

{\rm (iii)} If $j^2{\mathcal N}=(0,0)$, then $\Sigma$ has non-isolated singularities along the 
null curves $x=0$ and $y=0$.
\end{proposition}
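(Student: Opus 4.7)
The plan is to exploit the very simple form of the general solution $\mathcal N(x,y)=(f_1(x)+g_1(y),\,f_2(x)+g_2(y))$ to write down an explicit formula for the Jacobian determinant that defines $\Sigma$, and then read off the three assertions from factorization properties of this Jacobian.

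First I would set
\[
J(x,y) := \det(\dd\mathcal N) = f_1'(x)\,g_2'(y) - f_2'(x)\,g_1'(y),
\]
so that $\Sigma = \{J=0\}$. For part (i), the hypothesis $\mathcal N_x(0,0)=(0,0)$ is just $f_1'(0)=f_2'(0)=0$, so $J(0,y) = f_1'(0)g_2'(y)-f_2'(0)g_1'(y) \equiv 0$ for all $y$; thus the whole null curve $x=0$ lies in $\Sigma$. The case $\mathcal N_y(0,0)=(0,0)$ is symmetric. Part (ii) follows at once by applying part (i) in both variables.

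For part (iii), the condition $j^2\mathcal N=(0,0)$ forces $f_i'(0)=f_i''(0)=0$ and $g_i'(0)=g_i''(0)=0$ for $i=1,2$. Hence we can write $f_i'(x) = x^2\phi_i(x)$ and $g_i'(y) = y^2\psi_i(y)$ with $\phi_i,\psi_i$ analytic. Substituting gives
\[
J(x,y) = x^2 y^2 \bigl(\phi_1(x)\psi_2(y) - \phi_2(x)\psi_1(y)\bigr) =: x^2 y^2\, h(x,y).
\]
I would then verify by direct differentiation that
\[
J_x = 2xy^2 h + x^2 y^2 h_x, \qquad J_y = 2x^2 y h + x^2 y^2 h_y,
\]
so that $J$, $J_x$, and $J_y$ all vanish identically on $\{x=0\}\cup\{y=0\}$. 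This says that every point of these two null curves is a critical point of the defining function $J$, i.e.\ a singular point of the analytic variety $\Sigma$. Therefore $\Sigma$ has non-isolated singularities along both null curves $x=0$ and $y=0$, as claimed.

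I do not expect any real obstacle here: the main point is simply recognizing that the d'Alembert splitting makes $J$ separable into an $x$-part and a $y$-part, and that vanishing of derivatives of $f_i$ and $g_i$ at the origin transports directly into factorizations of $J$ by powers of $x$ and $y$. The only mildly subtle step is the bookkeeping in (iii) showing that both $J_x$ and $J_y$ vanish along each null curve, which is immediate from the factor $x^2 y^2$.
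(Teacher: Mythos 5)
Your proposal is correct and follows essentially the same route as the paper: both compute the Jacobian $\lambda=f_1'(x)g_2'(y)-f_2'(x)g_1'(y)$ from the d'Alembert splitting, read off (i) and (ii) from the vanishing of $f_i'(0)$ or $g_i'(0)$, and for (iii) use $j^2\mathcal N=(0,0)$ to factor $\lambda=x^2y^2\lambda_1(x,y)$. Your explicit check that $\lambda$, $\lambda_x$ and $\lambda_y$ vanish along both null curves just spells out the detail the paper leaves implicit.
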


\begin{proof}
The proof is straightforward since $\Sigma$ 
is the zero set of $\lambda=f'_1(x)g'_2(y)- f'_2(x)g'_1(y)$.
For instance, for (iii) we can write ${\mathcal N}(x,y)=(x^3h_1(x)+y^3k_1(y),x^3h_2(x)+y^3k_2(y))$, consequently $\lambda(x,y)=x^2y^2\lambda_1(x,y)$.
\end{proof}

\begin{theorem}\label{theo:LHmaps_plane_poss_sing}
{\rm (i)} The singularities $(x,y^2)$, $(x,xy\pm y^k)$, $(x,y^3\pm x^ky)$, 
those with $j^3{\mathcal N}\sim_{\mathcal A^{(2)}}(x,xy^2)$, $(x,x^3y+\alpha x^2y^2+y^4 +x^3y^2)$ and 
the singularities  $I_{2,2}^{l,m}$ can be represented by germs of Lorentzian-harmonic maps into the plane.

{\rm (ii)} 	Map-germs with a 3-jet $\mathcal A^{(3)}$-equivalent to $(x,x^2y)$ and the singularities $I_{2,2}^{l}$ cannot be represented by Lorentzian-harmonic map-germs into the plane.

{\rm (iii)} There are no finitely $\mathcal A$-determined Lorentzian-harmonic map-germs into the plane with a zero 2-jet.
\end{theorem}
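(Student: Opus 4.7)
The plan is to follow the structure of the proofs of Theorems~\ref{theo:singLOrzHar} and~\ref{theo:Rank0}, with the calculations simplified by the fact that every Lorentzian-harmonic map $\mathcal{N}:(\real^{1,1},0)\to (\real^2,0)$ has the explicit additive form $\mathcal{N}(x,y) = (f_1(x) + g_1(y), f_2(x) + g_2(y))$ recalled before Proposition~\ref{prop:LHmaps_plane_gen}. Writing the $n$-jet of $\mathcal{N}$ in notation analogous to~\eqref{n-jets(u,v)}, the $4n$ corner coefficients $a_{k0}, a_{kk}, b_{k0}, b_{kk}$ ($1 \le k \le n$) are independent free parameters, while every mixed coefficient $a_{ki}, b_{ki}$ with $0<i<k$ vanishes identically (rather than being a polynomial $P_{ki}$ in the corners as in the sphere case). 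This makes the realization problem even more flexible than for the sphere target.

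For part (i), I assume without loss of generality that $a_{10} \ne 0$ and perform the source change $X = u(x,y)$, $Y = y$ together with the target translation $v \mapsto v - (b_{10}/a_{10})u$, reducing $\mathcal{N}$ to $(X, h(X,Y))$ with $h(X,Y) = f_2(f_1^{-1}(X - g_1(Y))) + g_2(Y) - (b_{10}/a_{10})X$. For each singularity in the list I either exhibit a direct additive representative---for instance $(x,y^2)$ and $I_{2,2}^{l,m} = (x^2+y^{2l+1},\, y^2+x^{2m+1})$ are already of additive form---or verify, after possibly pre-composing with a linear source change such as $(x,y)\mapsto(x+y,y)$ to produce the needed mixed coefficients in the reduction, that the free corner parameters in $f_2, g_2$ can be chosen to realize an $\mathcal{A}^{(k)}$-representative of each model. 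Versality is then argued as in Theorem~\ref{theo:singLOrzHar}: the moduli of an $\mathcal{A}_e$-versal unfolding appear as coefficients of pure monomials in $f_2$ or $g_2$ and can therefore be deformed within the Lorentzian-harmonic class.

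For part (ii), I carry out the same 3-jet reduction as in the sphere proof, under the hypothesis $j^2 h \equiv 0$, which forces $a_{10}b_{20} = a_{20}b_{10}$ and $a_{10}b_{22} = a_{22}b_{10}$, and expand $h$ to third order. Writing $\mu = a_{10} b_{30} - a_{30} b_{10}$ and $\kappa = a_{10}b_{33} - a_{33}b_{10}$, a direct expansion gives
\[
a_{10}^4\, j^3 h \;=\; -3\mu a_{11} X^2 Y + 3\mu a_{11}^2 XY^2 + (\kappa a_{10}^3 - \mu a_{11}^3)\,Y^3
\]
after killing the $X^3$ term by target translation. Since $l_1 = -3\mu a_{11}/a_{10}^4$ and $l_2 = 3\mu a_{11}^2/a_{10}^4$ share the factor $\mu a_{11}$, the implication $l_2 = 0 \Rightarrow l_1 = 0$ holds exactly as in the sphere case, so no Lorentzian-harmonic 3-jet is $\mathcal{A}^{(3)}$-equivalent to $(x, x^2 y)$. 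For $I_{2,2}^l = (x^2 - y^2 + x^{2l+1}, xy)$ I argue at the 2-jet level via the pencil of binary quadratic forms $sA - tB$: any rank-zero Lorentzian-harmonic 2-jet is a pair of diagonal forms $A = \alpha_1 x^2 + \beta_1 y^2$, $B = \alpha_2 x^2 + \beta_2 y^2$, whose pencil always admits the two real degenerate members $s/t = \alpha_2/\alpha_1$ and $s/t = \beta_2/\beta_1$; by contrast, the pencil of $(x^2 - y^2, xy)$ has $\det(sA - tB) = -s^2 - t^2/4$ with no real zeros. Since the number of real roots of this discriminant is a $GL(2)\times GL(2)$-invariant, the two 2-jets lie in distinct $\mathcal{A}^{(2)}$-orbits.

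For part (iii), Proposition~\ref{prop:LHmaps_plane_gen}(iii) already supplies $\lambda = f_1'g_2' - f_2'g_1' = x^2 y^2 \lambda_1(x,y)$, so the critical locus $\Sigma$ has non-isolated singularities along the null curves $x=0$ and $y=0$; the Gaffney geometric criterion (Theorem~2.1 of~\cite{Wall}, as invoked in the proof of Theorem~\ref{theo:Rank0}(iii)) then rules out finite $\mathcal{A}$-determinacy. The main obstacle will be in part~(ii): verifying cleanly that the reduced cubic form $l_1 X^2 Y + l_2 XY^2 + l_3 Y^3$ is a genuine $\mathcal{A}^{(3)}$-invariant of the 3-jet of $\mathcal{N}$, so that the constraint $l_2 = 0 \Rightarrow l_1 = 0$ rigorously excludes $(x, x^2 y)$ rather than merely one representative of its orbit. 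This delicate step is already addressed in the sphere analogue, and we will import its verification verbatim.
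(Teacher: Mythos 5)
Your proposal is correct, and parts (i) and (iii) follow essentially the paper's own route: the paper likewise exploits the additive form to reduce $\mathcal N$ to $(x, f_2(x-y^l)+g_2(y))$ (your $(X,\,F(X-g_1(Y))+H(Y))$ with $F=f_2\circ f_1^{-1}$ is the same reduction, just without first normalizing $f_1$ and $g_1$), realizes each model by choosing the free coefficients of $f_2,g_2$, and disposes of (iii) exactly as you do via Proposition \ref{prop:LHmaps_plane_gen}(iii) and Gaffney's criterion. Where you genuinely diverge is part (ii). For $(x,x^2y)$ the paper argues geometrically: it computes $\lambda_{xy}=f_1''g_2''-f_2''g_1''$, shows $\lambda_{xy}(0,0)=0$ whenever $\lambda=\lambda_x=\lambda_y=0$ there, and then uses the recognition criterion that $j^3\mathcal N\sim_{\mathcal A^{(3)}}(x,x^2y)$ forces the kernel of $\dd\mathcal N$ to be parallel to the kernel of $\hess(\lambda)$, which for a diagonal Hessian can only happen when $\lambda_{xx}=\lambda_{yy}=0$, whence $j^3\mathcal N\sim(x,0)$. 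You instead redo the sphere-case coefficient computation, and your formula $a_{10}^4 j^3h=-3\mu a_{11}X^2Y+3\mu a_{11}^2XY^2+(\kappa a_{10}^3-\mu a_{11}^3)Y^3$ is correct (I checked it: $\beta_2=0$ kills the $a_{20},b_{20}$ contributions to the cubic coefficient of $f_2\circ f_1^{-1}$, leaving exactly $\mu/a_{10}^4$); the shared factor $\mu a_{11}$ in $l_1,l_2$ then excludes the orbit of $X^2Y$, whose residual normal form modulo $X^3$ under $Y\mapsto\gamma X+\delta Y$ requires $l_2=l_3=0\neq l_1$. One small caveat: your derivation of that formula uses $\beta_2=0$, which follows from $j^2h\equiv 0$ only when $a_{11}\neq 0$; in the complementary case $a_{11}=0$ (hence $b_{11}=0$) the constraint is only $\eta_2=0$ and the cubic part becomes $-2\beta_2 a_{22}XY^2+\eta_3Y^3$, so $l_1=0$ holds a fortiori — worth a sentence, as in the sphere proof's ``case (2) follows similarly.'' For $I_{2,2}^l$ your pencil-of-binary-quadrics invariant (diagonal pairs always have real degenerate members, $(x^2-y^2,xy)$ has none) is a clean algebraic substitute for the paper's geometric observation that a rank-zero Lorentzian-harmonic map has both null curves in its singular set while $I_{2,2}^l$ has an isolated singular point; the two arguments are equivalent, since the real degenerate members of the pencil are exactly the real lines in the tangent cone of $\{\lambda=0\}$. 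Your versality remarks in (i) are harmless but not required: unlike Theorems \ref{theo:singLOrzHar} and \ref{theo:Rank0}, the planar statement makes no versality claim.
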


\begin{proof}

(i) Suppose that $j^1{\mathcal N}\ne (0,0)$. Interchanging $x$ and $y$ if necessary, we can change coordinates in the source and write 
${\mathcal N}\sim_{\mathcal A}(x+ y^l,\bar{f}_2(x)+\bar{g}_2(y))$ (here we take $g_1$ to have at least one non-zero derivative at zero). A further change of variable gives  
${\mathcal N}\sim_{\mathcal A}(x,f_2(x-y^l)+g_2(y))$. 

If $l=1$,  we can choose $f_2$ and $g_2$ so as to get the singularities 
$(x,y^2)$, $(x,xy\pm y^k)$, $(x,y^3\pm x^ky)$ and $(x,x^3y+\alpha x^2y^2+y^4 +x^3y^2)$.

If $l=2$,  we can choose $f_2$ and $g_2$ so as to get the finitely $\mathcal A$-determined singularities with 3-jets $\mathcal A^{(3)}$-equivalent to $(x,xy^2)$.

If $l\ge 3$,  ${\mathcal N}\sim_{\mathcal A}(x,y^3h(x,y))$ when $g_2'(0)=g_1''(0)=0$, so $f$ is not finitely $\mathcal A$-determined. Therefore, when $l\ge 3$, the only finitely $\mathcal A$-determined singularity that we can get is $(x,y^2)$ when $g_2''(0)\ne 0$. 

 Suppose now that $j^1{\mathcal N}=(0,0)$. Clearly, we can get all the singularities $I_{2,2}^{l,m}: (x^2+y^{2l+1},y^2+x^{2m+1}),\, l\ge m\ge 1$.

\smallskip
(ii) We have 
$
{\partial^k \lambda}/{\partial x^k} = f^{(k)}_1g'_2- f^{(k)}_2g'_1
,
{\partial^k \lambda}/{\partial x^k} = f'_1g^{(k)}_2- f'_2g^{(k)}_1
$
and 
$
\lambda_{xy}=f''_1g''_2- f''_2g''_1$, so  
$\lambda_{xy}(0,0)=0$ when $\lambda=\lambda_x=\lambda_y=0$ at the origin. Therefore, $j^2\lambda$ has a singularity more degenerate than Morse when $\lambda_{xx}(0,0)=0$ or $\lambda_{yy}(0,0)=0$. In these conditions, we have 
$j^3{\mathcal N}\sim_{\mathcal A^{(3)}}(x,x^2y)$ if, and only if, the kernel direction of $df$ 
is parallel to the kernel direction of $Hess(\lambda)$ at the origin. This could happen if $\lambda_{xx}(0,0)=\lambda_{yy}(0,0)=0$, but that leads to $j^3{\mathcal N}\sim_{\mathcal A^{(3)}}(x,0)$.

\smallskip
(iii) By Proposition \ref{prop:LHmaps_plane_gen}, the singular set of ${\mathcal N}$ has a non-isolated singularity. It follows by Gaffney's geometric criteria (see Theorem 2.1 in \cite{Wall}) that the singularity of ${\mathcal N}$ is not finitely $\mathcal A$-determined.
\end{proof}


\medskip
\noindent
{\bf Acknowledgements:} The research was partially supported through the program ``Research in Pairs''
by the Mathematisches Forschungsinstitut Oberwolfach in 2019. The first named author was partially supported by the Independent Research Fund Denmark, grant number 9040-00196B. The second author acknowledges financial support from S\~ao Paulo Research Foundation (FAPESP) grant \mbox{2019/07316-0} and CNPq grant 303772/2018-2.


\end{document}